\newcommand{\mytitle}{A study of large fringe and non-fringe subtrees in conditional Galton-Watson trees}
\newcommand{\myauthors}{Xing Shi Cai, Luc Devroye}
\newcommand{\drafVersion}{Full Version}
\newcommand\widenonf[1]{\widebar{#1}}
\newcommand{\frT}{{\mathfrak{T}}}
\newcommand{\tset}{\frT}
\newcommand{\gwraw}{\cT}
\newcommand{\gwsup}{\mathbf{gw}}
\newcommand{\gw}{\gwraw^{\gwsup}}
\newcommand{\gwrand}{{\gw_{n,*}}}
\newcommand{\gwn}{\gw_n}
\newcommand{\tildegw}{\widetilde{\gwraw}^{\gwsup}}
\newcommand{\overgwn}{\widenonf{\gwraw}_n^{\gwsup}}
\newcommand{\tildegwn}{\widetilde{\gwraw}_n^{\gwsup}}
\newcommand{\gwbi}{\gwraw^{\mathbf{bin}}}
\newcommand{\gwbin}{\gwbi_{n}}
\newcommand{\Tnbin}{T_{n}^{\mathbf{bin}}}
\newcommand{\tsetkn}{\cS_{k_n}}
\newcommand{\tsetk}{\cS_{k}}
\newcommand{\tsetplus}[1]{\cS_{\le #1}^{+}}
\newcommand{\tsetpluskn}{\tsetplus{k_n}}
\newcommand{\treecount}{N}
\newcommand{\fringeT}{\treecount_{T}(\gwn)}
\newcommand{\fringeTn}{\treecount_{T_n}(\gwn)}
\newcommand{\fringeS}{\treecount_{\cS}(\gwn)}
\newcommand{\fringeSk}{\treecount_{\tsetk}(\gwn)}
\newcommand{\fringeSkn}{\treecount_{\tsetkn}(\gwn)}
\newcommand{\bfringeT}{\treecount_{T}(\overgwn)}
\newcommand{\bfringeTn}{\treecount_{T_n}(\overgwn)}
\newcommand{\nfsup}{{\mathbf{nf}}}
\newcommand{\nftreecount}{\treecount^\nfsup}
\newcommand{\nftcount}{\nftreecount_{T}(\gwn)}
\newcommand{\nftncount}{\nftreecount_{T_n}(\gwn)}
\newcommand{\nfpi}{\pi^\nfsup}
\newcommand{\tleast}{{T^{{\mathbf{min}}}_k}}
\newcommand{\pleast}{p^{{\mathbf{min}}}}
\newcommand{\pleastk}{{\pleast_k}}
\newcommand{\pleastToOneByk}{{(\pleastk)^{1/k}}}
\newcommand{\pmax}{p_{{\mathbf{max}}}}
\newcommand{\ileast}{\kappa}
\newcommand{\treerary}{T^{\mathbf{r}\text{-}\mathbf{ary}}}
\newcommand{\tstar}{T^{\mathbf{star}}}
\newcommand{\pSetMaxSize}{K_n}
\DeclareMathOperator{\rootat}{\prec}
\newcommand{\xin}{\xi^{\mathbf{n}}}
\newcommand{\xinvec}{{\bm \xi}^{\mathbf{n}}}
\newcommand{\xinlist}{\xin_{1}, \xin_{2}, \ldots, \xin_{n}}
\newcommand{\xilist}{\xi_{1}, \xi_{2},\ldots, \xi_{n}}
\newcommand{\xitilden}{\widetilde{\xi}^{\mathbf{n}}}
\newcommand{\xitildenvec}{\widetilde{\bm \xi}^{\mathbf{n}}}
\newcommand{\xitildenlist}{\xitilden_{1}, \xitilden_{2}, \ldots, \xitilden_{n}}
\newcommand{\xihatn}{\widehat{\xi}^{\mathbf{n}}}
\newcommand{\xihatnvec}{\widehat{\bm \xi}^{\mathbf{n}}}
\newcommand{\xihatnlist}{\xihatn_{1}, \xihatn_{2}, \ldots, \xihatn_{n}}
\newcommand{\xiovern}{\widenonf{\xi}^{\mathbf{n}}}
\newcommand{\xiovernvec}{\widenonf{\bm \xi}^{\mathbf{n}}}
\newcommand{\xiovernlist}{\xiovern_{1}, \xiovern_{2}, \ldots, \xiovern_{n}}
\newcommand{\Itilde}{\widetilde{I}}
\newcommand{\Jtilde}{\widetilde{J}}
\newcommand{\Ihat}{\widehat{I}}
\newcommand{\Jhat}{\widehat{J}}
\newcommand{\Iover}{\widenonf{I}}
\newcommand{\Gammaover}{\widenonf{\Gamma}}
\newtheorem{myCond}{Condition}
\newcommand{\Toplus}{\{T\boxplus T\}}
\newcommand{\ToplusB}{\cA_{\mathbf{big}}}
\newcommand{\ToplusS}{\cA_{\mathbf{small}}}
\newcommand{\idx}{\cI}
\newcommand{\idxVecd}[1]{\idx_{#1}(\vecd)}
\newcommand{\idxVecdIn}[1]{\idx^{\mathbf{in}}_{#1}(\vecd)}
\newcommand{\idxVecdOut}[1]{\idx^{\mathbf{out}}_{#1}(\vecd)}
\newcommand{\splay}{\mathop{\mathrm{Splay}}}
\title{\mytitle}
\author{\small \myauthors
    \\
    \small
    School of Computer Science, McGill University of Montreal, Canada,\\
    \small
    \texttt{xingshi.cai@mail.mcgill.ca}\\
    \small
    \texttt{lucdevroye@gmail.com}
}
\date{\small \today}
\begin{document}

\maketitle

\begin{abstract}
We study the conditions for families of fringe or non-fringe
subtrees to exist with high probability (whp) in \(\gwn\), a Galton-Walton tree of size
\(n\). We first give a Poisson approximation of fringe subtree counts in \(\gwn\),
which permits us to determine the height of the maximal complete $r$-ary fringe subtree.
Then we determine the maximal \(K_n\) such that
every tree of size at most \(K_n\)
appears as fringe subtree in \(\gwn\) whp.
Finally, we show that non-fringe subtree counts are concentrated and determine, as an application, the
height of the maximal complete \(r\)-ary non-fringe subtree in \(\gwn\).
\end{abstract}


\section{Introduction}

In this paper, we study the conditions for families of fringe or non-fringe subtrees to exist whp
(with high probability) in a Galton-Walton tree conditional to be of size \(n\).
In particular, we want to find the height of the maximal complete \(r\)-ary fringe and non-fringe subtrees.
We also want to determine the threshold \(k_n\) such that all trees of size at most
\(k_n\) appear as fringe subtrees. In doing so, we extend \citeauthor{Janson2014Asym}'s
\citep{Janson2014Asym} result on fringe subtrees counts and prove a new concentration theorem for
non-fringe subtree counts.

Let \(\tset\) be the set of all rooted, ordered, and unlabeled trees, which we refer to as
\emph{plane} trees.  All trees considered
in this paper belong to \(\tset\).  (See \citet[sec.\ 2.1]{janson2012simply} for details.)
Figure \ref{intro:fig:plane:tree} lists all plane trees of at most \(4\) nodes.
\begin{figure}[ht!]
  \centering
    \begin{tikzpicture}
    \node[anchor=south west,inner sep=0] at (0,0) {\includegraphics{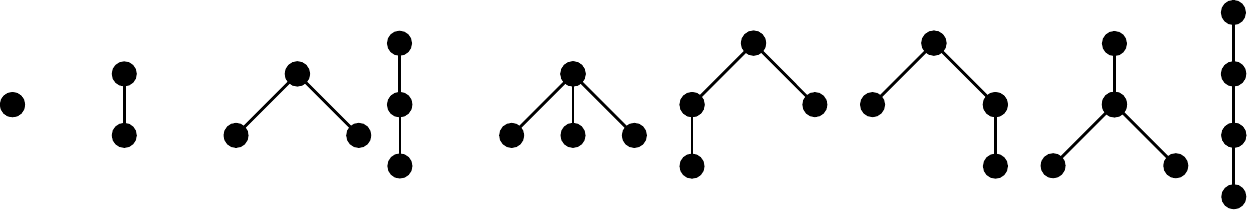}};
    \end{tikzpicture}
    \caption{Plane trees of at most \(4\) nodes.}
    \label{intro:fig:plane:tree}
\end{figure}

Given a tree \(T \in \tset\) and a node \(v \in T\), let \(T_v\) denote the subtree rooted
at \(v\).  We call \(T_v\) a \emph{fringe} subtree of
\(T\).  If \(T_v\) is isomorphic to some tree \(T' \in \tset\), then we write \(T' = T_v\) and say that \(T\) has a
\emph{fringe} subtree of shape \(T'\) rooted at \(v\), or simply \(T\) contains \(T'\) as a fringe
subtree.

On the other hand, if \(T_v\) can be made isomorphic to
\(T'\) by replacing some or none of its own fringe subtrees with leaves (nodes without children), then we write \(T' \rootat
T_v\) and say that \(T\) has a \emph{non-fringe} subtree of shape \(T'\) rooted at \(v\), or
simply \(T\) contains \(T'\) as a non-fringe subtree.
(Note that \(T' = T_v\) implies that \(T' \prec T_v\).)
We also use the notation \(T' \rootat T\) to denote that \(T\) has a non-fringe subtree of shape
\(T'\) at its root.
Figure \ref{intro:fig:subtrees} shows some examples of fringe and non-fringe subtrees.

\begin{figure}[ht!]
  \centering
    \begin{tikzpicture}
    \node[anchor=south west,inner sep=0] at (0,0) {\includegraphics{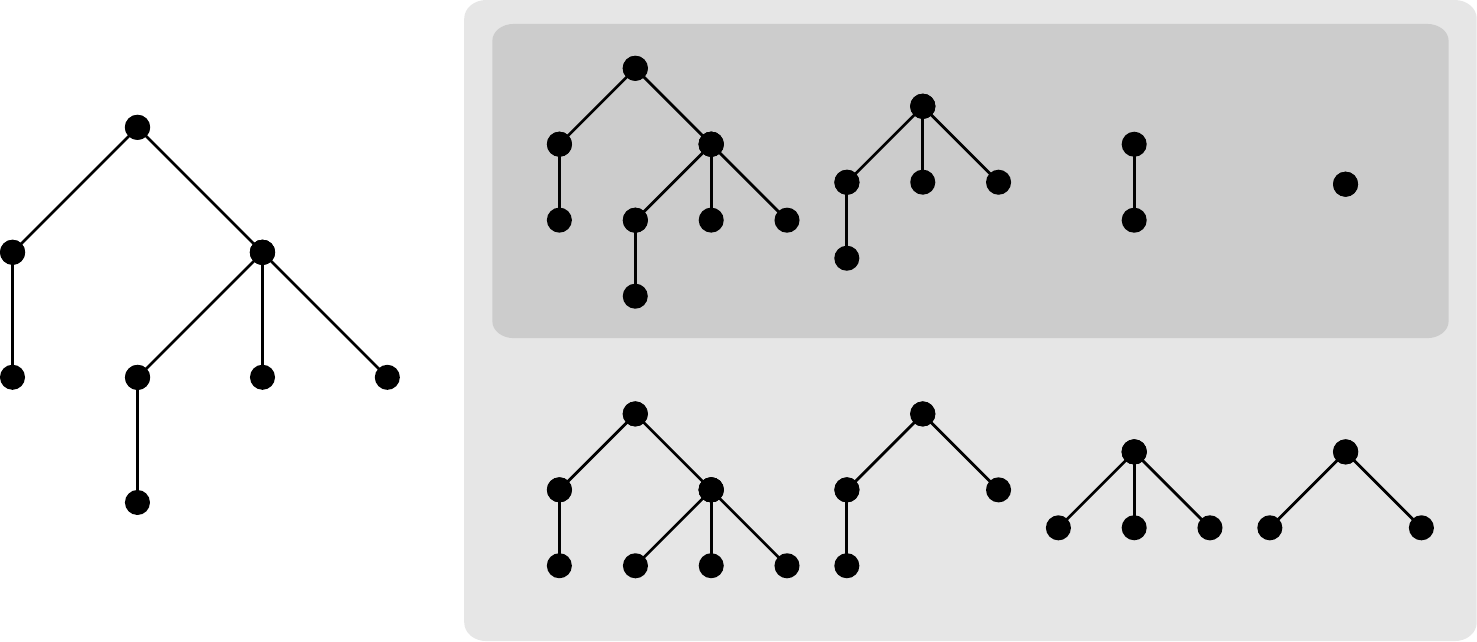}};
    \node at (2,0.7) {$T$};
    \node at (12,3.5) {fringe subtrees of $T$};
    \node at (12,0.5) {non-fringe subtrees of $T$};
    \end{tikzpicture}
    \caption{Examples of fringe and non-fringe subtrees.}
    \label{intro:fig:subtrees}
\end{figure}

Let \(\xi\) be a non-negative integer-valued random variable.  The Galton-Watson tree \(\gw\)
with
offspring distribution \(\xi\) is the random tree generated by starting from the root and
independently giving each node a random number of children, where the numbers of children are all
distributed as \(\xi\).  The conditional Galton-Watson tree \(\gwn\) is \(\gw\) restricted to the
event \(|\gw|=n\), i.e., \(\gw\) has \(n\) nodes.  The comprehensive survey by
\citet{janson2012simply} describes the history and the basic properties of these trees.

In the study of conditional Galton-Watson trees, the following is usually assumed throughout the
paper:
\begin{myCond}
    \label{cond:offspring}
    Let \(\gwn\) be a conditional Galton-Watson tree of size \(n\) with offspring distribution
    \(\xi\), such that \(\e \xi = 1\) and \(0 < \sigma^{2} := \V{\xi} < \infty\).
    Let \(\gw\) be the corresponding unconditional Galton-Watson tree.
\end{myCond}

We summarize notations:
\begin{itemize}[label={$\cdot$},leftmargin=*]
\item  \(\tset\)  --- the set of all rooted, ordered and unlabeled trees (plane trees)
\item  \(T\)      --- a tree in \(\tset\)
\item  \(T_v\)   --- a fringe subtree of \(T\) rooted at node \(v \in T\)
\item  \(\xi\)   --- a non-negative integer-valued random variable with \(\e \xi = 1\) and \(0 < \sigma^2 := \V{\xi} < \infty\)
\item  \(h\)     --- the span of \(\xi\), i.e., \({\gcd}\{i \ge 1: p_i > 0\}\)
\item  \(\gw\)   --- an unconditional Galton-Watson tree  with offspring distribution \(\xi\)
\item  \(\gwn\)   --- \(\gw\) given that \(|\gw|=n\)
\item  \(\gw_{n,v}\)   --- a fringe subtree of \(\gwn\) rooted at node \(v \in \gwn\)
\item  \(\gw_{n,*}\)   --- a fringe subtree of \(\gwn\) rooted at a uniform random node of \(\gwn\)
\item  \(T_n\)   --- a sequence of trees
\item  \(\cS_{n}\)   --- the set of all trees of size \(n\)
\item  \(\cS\)   --- a set of trees
\item  \(\tsetplus{n}\)   --- the set \(\{T \in \tset: |T| \le n, \p{\gw = T} > 0\}\)
\item  \(\cA_n\)   --- a sequence of sets of trees
\item  \(\fringeS\) --- the number of fringe subtrees of \(\gwn\) that belong to \(\cS\)
\item  \(\pi(\cS)\) --- \(\p{\gw \in \cS}\)
\item  \(\nftcount\) --- the number of non-fringe subtrees of \(\gwn\) of shape \(T\)
\item  \(\nfpi(T)\) --- \(\p{T \rootat \gw}\), the probability that \(\gw\) has a non-fringe subtree \(T\) at its root
\end{itemize}

If \(p_1 = 0\), then there exist positive integers \(n\) such that \(\p{|\gw|=n} = 0\).
For such \(n\), \(\gwn\) is not well-defined.
But it is easy to show that \(\p{|\gw|=n} > 0\) for all \(n \ge n_0\) with \(n - 1 \equiv 0 \pmod h\),
where \(h\) is span of \(\xi\) and \(n_0\) depends only on \(\xi\) (see \citep[cor.\ 15.6]{janson2012simply}).
Therefore, in this paper, for all asymptotic results about \(\gw\) and \(\gwn\), the limits are always taken
along the subsequence with \(n - 1\equiv 0 \pmod h\).

Extending a result by \citet{aldous1991asymptotic},
\citeauthor{janson2012simply} \cite[thm.\ 7.12]{janson2012simply} proved the following theorem:
\begin{theorem}
    \label{intro:thm:count}
    Assume Condition \ref{cond:offspring}.
    The conditional distribution \(\cL(\gwrand|\gwn)\) converges in probability to
    \(\cL(\gw)\). In other words, for all \(T \in \tset\), as \(n \to \infty\),
    \begin{equation}
        \frac{\treecount_{T}(\gwn)}{n} = \p{\gwrand = T | \gwn} \inprob \p{\gw = T}.
    \end{equation}
\end{theorem}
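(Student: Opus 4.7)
The plan is to upgrade the convergence of expectations to $L^2$-convergence of $\fringeT/n$, which automatically gives convergence in probability to the deterministic limit $\p{\gw = T}$. Since $\e \fringeT = n\p{\gwrand = T}$, and \citeauthor{aldous1991asymptotic}'s theorem on the local fringe distribution at a uniformly random node gives $\p{\gwrand = T} \to \p{\gw = T}$, what remains is to show $\V{\fringeT}/n^2 \to 0$.

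For the variance, I would expand
$$\e \fringeT^2 = \sum_{u,v} \p{\gw_{n,u} = T,\ \gw_{n,v} = T},$$
summed over ordered pairs of vertices of $\gwn$, and partition the pairs into three groups: (a) $u = v$; (b) one of $u, v$ is a strict descendant of the other; (c) $u$ and $v$ are unrelated. Group (a) contributes $\e \fringeT = O(n)$. In group (b), the ancestor carries a fringe subtree of shape $T$ containing both $u$ and $v$, so the count is at most $2|T|\fringeT = O(n)$. After dividing by $n^2$, only group (c) contributes in the limit.

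For group (c), the two fringe subtrees of shape $T$ at $u$ and $v$ are disjoint. Cutting both out and replacing each by a single leaf yields a ``skeleton'' tree $S$ of size $n - 2(|T| - 1)$ with two distinguished leaves, and conditional on $S$ the two excised subtrees are independent Galton-Watson trees conditioned to equal $T$. Using the Dwass/cycle-lemma identity $\p{|\gw| = k} = k^{-1}\p{\xi_1 + \cdots + \xi_k = k-1}$, together with its two-marked-leaf analogue for the skeletons, and the local central limit theorem for sums of copies of $\xi$ (valid under Condition \ref{cond:offspring} since $0 < \sigma^2 < \infty$), the contribution of group (c) evaluates to $(1 + o(1)) n^2 \p{\gw = T}^2$. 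Combining with the $O(n)$ bounds on (a) and (b) yields $\e[\fringeT^2]/n^2 \to \p{\gw = T}^2$, so $\V{\fringeT/n} \to 0$.

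The main technical obstacle is the precise combinatorial bookkeeping in group (c): enumerating plane trees with two marked leaves at specified positions, tracking the correct multiplicative factor from inserting a copy of $T$ at each mark, and verifying that the resulting ratio of probabilities (which by the cycle lemma reduces to a ratio of local-CLT density values at $n$ and $n - 2(|T|-1)$) tends to $1$ along the correct arithmetic subsequence modulo the span $h$ of $\xi$.
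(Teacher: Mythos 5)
Your argument is sound, but note that the paper never proves Theorem \ref{intro:thm:count} itself: it is quoted from \citet[thm.~7.12]{janson2012simply}, and the paper's own work (Theorem \ref{intro:thm:fringe:count}) is a strictly stronger Poisson approximation for trees of growing size. That said, your first-and-second-moment plan runs parallel to the machinery the paper does develop. The mean needs no appeal to Aldous: Lemma \ref{pre:lem:tree:count:expectation} together with part (i) of Lemma \ref{pre:lem:prob:sum:of:iid} already gives \(\E{\fringeT}/n \to \pi(T)\) for fixed \(T\). For the variance, the cleaner route --- the one the paper uses in Lemma \ref{fringe:lem:tree:count:var} --- is to work with the uniformly rotated preorder degree sequence: your group (c) pairs become pairs of disjoint length-\(k\) windows, and by Lemma \ref{pre:lem:tree:deg:seq:rotation} each such pair contributes exactly \(\pi(T)^2\,\p{S_{n-2k}=n-2k+1}/\p{S_n=n-1}\), which tends to \(\pi(T)^2\) for fixed \(k\) by the local CLT; overlapping windows with \(i\ne j\) contribute zero since a size-\(k\) fringe subtree cannot properly contain another. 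This gives \(\V{\fringeT}=o(n^2)\) with no skeleton enumeration at all. Your cut-and-paste formulation is correct (the map from a tree with two unrelated occurrences of \(T\) to a skeleton with two marked leaves is a bijection), but it hides a nested estimate: summing the skeleton weights requires knowing that the number of leaves of a Galton--Watson tree conditioned to have size \(m=n-2(|T|-1)\) is concentrated near \(p_0 m\) (with a second moment), which is itself another instance of the same computation --- provable directly from the conditioned degree sequence, so not circular, but it is exactly the bookkeeping the window formulation buys you for free. Two minor points: your group (b) in fact contributes nothing off the diagonal, since a proper fringe subtree of an occurrence of \(T\) has size strictly less than \(|T|\) (your \(O(n)\) bound is harmless); and the span issue you flag resolves automatically, because if \(\pi(T)=0\) the claim is trivial, while \(\pi(T)>0\) forces \(|T|-1\equiv 0 \pmod h\), so \(n-2k+1\) lies on the lattice of \(S_{n-2k}\) whenever \(n-1\equiv 0 \pmod h\).
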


Later \citeauthor{Janson2014Asym} strengthened the above result,
proving the asymptotic normality of \(\treecount_T(\gwn)\) by studying additive functionals on \(\gwn\)
\cite{Janson2014Asym}.

A natural generalization of \(\treecount_{T}(\gwn)\) is to consider fringe subtree counts
\(\fringeTn\) where \(T_n \in \tset\) is a sequence of trees instead of a fixed tree \(T\).
Let \(\Po(\lambda)\) denote
a Poisson random variable with mean \(\lambda\).
We have:
\begin{theorem}
    \label{intro:thm:fringe:count}
    Assume Condition \ref{cond:offspring}.
    Let \(\pi(T) := \p{\gw = T}\) and let \(k_n \to \infty, k_n = o(n)\).
    Then
    \begin{equation}
        \lim_{n \to \infty} \sup_{T:|T|=k_n}
        \distTV{\fringeT, \Po(n \pi(T))} = 0,
        \label{eq:tree:poi}
    \end{equation}
    where \(\distTV{\,\cdot\,, \,\cdot\,}\) denotes the total variation distance.
    Therefore,
    letting \(T_n\) be a sequence of trees with \(|T_n|=k_n\), we have as \(n \to \infty\):
    \begin{enumerate}[label=(\roman*)]
        \item
            If \(n \pi(T_n) \to 0\), then \(\fringeTn = 0\) whp.
        \item If \(n \pi(T_n) \to \mu \in (0, \infty)\), then
                \(\fringeTn \inlaw \Po(\mu)\).
        \item If \(n \pi(T_n) \to \infty\), then
            \[
                \frac{\fringeTn - n \pi(T_n)}{\sqrt{n \pi(T_n)}} \inlaw N(0,1),
            \]
            where \(N(0,1)\) denotes the standard normal distribution, and \(\inlaw\) denotes
            convergence in distribution.
    \end{enumerate}
\end{theorem}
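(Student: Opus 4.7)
My plan is to apply the Chen--Stein Poisson approximation method to the sum of indicators
\[
\fringeT \;=\; \sum_{v \in \gwn} I_v, \qquad I_v \;:=\; \mathbf{1}\{\gw_{n,v} = T\},
\]
exploiting the following key structural observation: since $|T|=k_n$, no proper fringe subtree of $T$ has $k_n$ nodes, and hence $I_u I_v \equiv 0$ whenever one of $u,v$ is a strict ancestor of the other in $\gwn$. I take the dependency neighbourhood $B_v$ at each node $v$ to be $\{v\}$ together with the ancestors and descendants of $v$, so that the usual Chen--Stein ``local covariance'' term $b_2 := \sum_v \sum_{u\in B_v\setminus\{v\}}\e{I_u I_v}$ vanishes identically, and only pairs with vertex-disjoint fringe subtrees contribute to second-moment cross-terms.

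First I would establish the uniform first-moment estimate $\e{\fringeT} = n\pi(T)(1+o(1))$ over $T$ with $|T|=k_n$. This is a consequence of the classical cut-and-graft identity for simply generated trees (see \citet{janson2012simply}), which gives, for a generic node-position $v$ in $\gwn$,
\[
\p{\gw_{n,v}=T} \;=\; \pi(T)\cdot \frac{\p{|\gw|=n-k_n+1}}{\p{|\gw|=n}},
\]
combined with the local limit theorem $\p{|\gw|=n}\sim c\, n^{-3/2}$ that holds under Condition~\ref{cond:offspring}; the ratio is $1+O(k_n/n)$, uniform in $T$ because it depends on $T$ only through $|T|$.

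Next I would apply the Chen--Stein bound itself. What remains to control are (a) the ``local mean'' term $b_1 = \sum_v\sum_{u\in B_v} \e{I_u}\,\e{I_v}$, and (b) a residual-dependence term $b_3$ measuring the deviation of the conditional expectation $\e{I_v\mid (I_u)_{u\notin B_v}}$ from $\e{I_v}$ induced by the global constraint $|\gw|=n$. Both are bounded by the same cut-and-graft identity plus the local limit theorem: with $\lambda := n\pi(T)$, the quantities $b_1/\max(1,\lambda)$ and $b_3/\max(1,\lambda)$ come out $O(k_n\pi(T))$, which is $o(1)$ uniformly over $T$ with $|T|=k_n$ since $\pi(T)$ decays exponentially in $k_n$. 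This establishes \eqref{eq:tree:poi}. Conclusions (i)--(iii) then drop out from standard Poisson facts: $\p{\Po(\lambda)=0}=e^{-\lambda}$, weak continuity of the Poisson family when $\lambda_n\to\mu\in(0,\infty)$, and $(\Po(\lambda)-\lambda)/\sqrt\lambda\inlaw N(0,1)$ as $\lambda\to\infty$---each preserved under total-variation convergence.

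The main obstacle will be obtaining the uniform first- and second-moment estimates and controlling the residual-dependence term $b_3$ uniformly over all $T$ with $|T|=k_n$; this hinges on a careful application of the local limit theorem to the outside-tree decomposition of $\gwn$, and the uniformity is gained from the fact that all the relevant error ratios depend on $T$ only through the common value $|T|=k_n$.
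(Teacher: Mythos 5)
Your plan has a genuine gap at the step you call $b_3$. In $\gwn$ the indicators $I_v$ are \emph{not} locally dependent: the conditioning on $|\gw|=n$ couples every pair of vertex-disjoint fringe subtrees, with pairwise covariances of order $\pi(T)^2\bigl(O(1/n)+O(k_n/n^{3/2})+O(k_n^2/n^2)\bigr)$ (this is exactly what Lemma 6.2 of \citet{Janson2014Asym}, restated as Lemma \ref{pre:lem:prob:sum:of:iid}(ii), quantifies). With your choice of $B_v$ (ancestors and descendants only), all of this global dependence is pushed into $b_3=\sum_v \E{\,\bigl|\E{I_v\mid \sigma(I_u:u\notin B_v)}-\E{I_v}\bigr|\,}$, and you assert without argument that it is $O(k_n\pi(T))$ after normalization. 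That conditional expectation, taken with respect to the sigma-algebra generated by the \emph{indicators} outside $B_v$ under the size-$n$ conditioning, is precisely the intractable object; no cut-and-graft identity computes it, and bounding it is the entire difficulty of the theorem. There are also two structural problems: the dependency neighbourhoods you propose are random (they depend on $\gwn$ itself), which the standard Chen--Stein local-dependence theorem does not cover without extra work; and your estimate $b_1/\max(1,\lambda)=O(k_n\pi(T))$ is wrong as stated, because the ancestor set of a typical vertex has size of order $\sqrt n$ (the expected total path length is $\Theta(n^{3/2})$), giving $b_1/\lambda \asymp \pi(T)\sqrt n \le \pmax^{k_n}\sqrt n$, which does not tend to $0$ uniformly when $k_n\to\infty$ slowly (the theorem assumes only $k_n\to\infty$, $k_n=o(n)$).

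By contrast, the paper avoids conditioning on indicator sigma-fields altogether: it passes to the uniformly rotated preorder degree sequence (so the index set and ``windows'' of length $k_n$ are deterministic and exchangeable), builds an exchangeable pair $(\fringeT,\bfringeT)$ by \emph{subtree switching} against an independent copy of $\gwn$, and applies the exchangeable-pairs form of Stein's method (Lemma \ref{pre:lem:exchangeable:pairs}). The residual dependence then appears not as a conditional-expectation term but as $\E{|\Psi_n-\E{\Psi_n}|}$, where $\Psi_n=\fringeSk$ counts \emph{all} fringe subtrees of size $k_n$; its variance is controlled by Lemma \ref{pre:lem:prob:sum:of:iid}(ii), and the final bound is of the form $r_n/q_n+\bigl(r_n\V{\Psi_n}/(n q_n^2)\bigr)^{1/2}$ with $r_n\approx\pi(T)$, $q_n\approx\p{|\gw|=k_n}=\Theta(k_n^{-3/2})$, which vanishes uniformly using only $\pi(T)\le\pmax^{k_n}$ and $k_n\to\infty$. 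If you want to salvage a Chen--Stein route you would need either a coupling-based version (e.g.\ size-bias or an explicit construction on the conditioned degree sequence) or an honest bound on $b_3$ of comparable strength; as written, the proposal assumes away the key estimate.
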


In Theorem \ref{intro:thm:fringe:count}, (i)--(iii) follow directly from \eqref{eq:tree:poi} by the following lemma,
whose simple proof we omit:
\begin{lemma}
    \label{intro:lem:poisson:distribution}
    Let \(X_n\) be a sequence of random variables.
    Let \(\mu_n\) be a sequence of non-negative real numbers.
    Assume that \(\distTV{X_n, \Po(\mu_n)} \to 0\) as \(n \to \infty\).
    We have:
    \begin{enumerate}[label=(\roman*)]
        \item
                If \(\mu_n \to 0\), then \(X_n = 0\) whp.
        \item If \(\mu_n \to \mu \in (0, \infty)\), then
                \(X_n \inlaw \Po(\mu)\).
        \item If \(\mu_n \to \infty\), then
            \(
            ({X_n - \mu_n})/{\sqrt{\mu_n}} \inlaw N(0,1).
            \)
    \end{enumerate}
\end{lemma}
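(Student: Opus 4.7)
The plan is to transfer each conclusion from the Poisson target $\Po(\mu_n)$ to $X_n$ via the hypothesis $\distTV{X_n, \Po(\mu_n)} \to 0$. The two elementary tools I would use are: \emph{(a)} for every measurable set $A$, $|\p{X_n \in A} - \p{\Po(\mu_n) \in A}| \le \distTV{X_n, \Po(\mu_n)}$; and \emph{(b)} for every measurable function $f$, $\distTV{f(X_n), f(\Po(\mu_n))} \le \distTV{X_n, \Po(\mu_n)}$. Both are standard consequences of the maximal-coupling characterization of total variation distance. A direct corollary of \emph{(a)} (bounding differences of expectations of $[0,1]$-valued test functions) is that whenever $\Po(\mu_n) \inlaw Y$ for some random variable $Y$, also $X_n \inlaw Y$.

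For \emph{(i)}, I would apply \emph{(a)} with $A=\{0\}$ and use $\p{\Po(\mu_n)=0}=e^{-\mu_n}$ to obtain
\[
\p{X_n = 0} \ge e^{-\mu_n} - \distTV{X_n, \Po(\mu_n)} \longrightarrow 1.
\]
For \emph{(ii)}, when $\mu_n \to \mu \in (0,\infty)$, pointwise convergence of the Poisson probability mass function gives $\Po(\mu_n) \inlaw \Po(\mu)$, and the corollary of \emph{(a)} immediately upgrades this to $X_n \inlaw \Po(\mu)$.

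For \emph{(iii)}, I would first invoke the classical central limit theorem for the Poisson distribution, $(\Po(\mu_n)-\mu_n)/\sqrt{\mu_n} \inlaw N(0,1)$ as $\mu_n \to \infty$ (provable, e.g., by characteristic functions or by writing $\Po(\mu_n)$ as a sum of i.i.d.\ Poisson summands of bounded mean and applying the ordinary CLT). Then I would apply tool \emph{(b)} to the affine map $x \mapsto (x-\mu_n)/\sqrt{\mu_n}$ to see that
\[
\distTV{\tfrac{X_n-\mu_n}{\sqrt{\mu_n}},\, \tfrac{\Po(\mu_n)-\mu_n}{\sqrt{\mu_n}}} \le \distTV{X_n, \Po(\mu_n)} \longrightarrow 0,
\]
and finish by invoking the corollary of \emph{(a)} once more.

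None of the steps presents a genuine obstacle; the only point worth flagging is that total variation distance is not increased by measurable maps, so the affine rescaling in part \emph{(iii)} costs nothing and the Poisson CLT transfers verbatim to $X_n$.
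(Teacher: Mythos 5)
Your proposal is correct; note that the paper deliberately omits the proof of this lemma as ``simple,'' so there is no written argument to compare against, and your argument is exactly the standard one the authors presumably had in mind: transfer along the vanishing total variation distance, using that a measurable (here affine) map does not increase total variation and that convergence in law of $\Po(\mu_n)$ (to $\Po(\mu)$ in case (ii), to $N(0,1)$ after rescaling in case (iii), via the classical Poisson central limit theorem) then passes to $X_n$. All three steps, including the bound $\p{X_n=0}\ge e^{-\mu_n}-\distTV{X_n,\Po(\mu_n)}$ in case (i), are sound as written.
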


Theorem \ref{intro:thm:fringe:count} can be generalized as follows:
\begin{theorem}
    \label{intro:thm:fringe:set:count}
    Assume Condition \ref{cond:offspring}.
    Let \(\tsetkn\) be the set of all trees of size \(k_n\),
    where \(k_n \to \infty\) and \(k_n = o(n)\).
    For \(\cS \subseteq \tsetkn\), let \(\pi(\cS):= \p{\gw \in \cS}\)
    and \(\treecount_{\cS}(\gwn) := \sum_{v \in \gwn} \iverson{\gw_{n,v} \in \cS}\).
    Then
    \[
        \lim_{n \to \infty} \sup_{\cS \subseteq \tsetkn} \distTV{\treecount_\cS(\gwn), \Po(n
        \pi(\cS))} = 0.
    \]
    Therefore, letting \(\cA_n\) be a sequence of sets of trees with \(\cA_n \subseteq \tsetkn\),
    we have:
    \begin{enumerate}[label=(\roman*)]
        \item
            If \(n \pi(\cA_n) \to 0\), then \(\treecount_{\cA_n}(\gwn) = 0\) whp.
        \item If \(n \pi(\cA_n) \to \mu \in (0, \infty)\), then
            \(\treecount_{\cA_n}(\gwn) \inlaw \Po(\mu)\).
        \item
            If \(n \pi(\cA_n) \to \infty\), then
            \[
                \frac{\treecount_{\cA_n}(\gwn) - n \pi(\cA_n)}{\sqrt{n \pi(\cA_n)}} \inlaw
                N(0,1).
            \]
    \end{enumerate}
\end{theorem}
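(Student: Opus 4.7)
The plan is to imitate the proof of Theorem~\ref{intro:thm:fringe:count}, observing that the argument there relies only on the fact that the target is a family of trees of a common size $k_n$, not on the cardinality of that family. Writing
\[
\treecount_\cS(\gwn) = \sum_{v \in \gwn} I_v, \qquad I_v := \iverson{\gw_{n,v} \in \cS},
\]
and $\lambda_n := n\pi(\cS)$, I would apply the Chen--Stein method to this sum of indicators, with dependence neighbourhood $B_v$ consisting of $v$ together with its ancestors and descendants in $\gwn$. Outside $B_v$ the indicator $I_v$ is determined by a fringe subtree disjoint from those determining the other indicators, and the only remaining source of dependence is the global size constraint $|\gw| = n$, which will be handled by the local limit theorem for $|\gw|$.

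The crucial combinatorial step is a \emph{decoupling}: if $I_v I_w = 1$ with $w \ne v$ and $w$ on the ancestral line of $v$, then one of $\gw_{n,v}, \gw_{n,w}$ is a proper fringe subtree of the other, yet both have size exactly $k_n$ --- a contradiction. Consequently the Chen--Stein ``joint-probability'' term $b_2 := \sum_v \sum_{w \in B_v \setminus \{v\}} \e[I_v I_w]$ vanishes identically, just as for a single target tree $T$ of size $k_n$. This identity holds for \emph{every} $\cS \subseteq \tsetkn$, so it is already uniform.

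Next I would bound the ``product'' term $b_1 := \sum_v \sum_{w \in B_v} (\e I_v)(\e I_w)$ and the ``conditional-expectation'' term $b_3$. For $b_1$, the outer sum equals $\e\treecount_\cS(\gwn) \sim \lambda_n$; the inner sum $\sum_{w \in B_v} \e I_w$ admits a uniform bound because any root-to-leaf path in $\gwn$ contains at most $n/k_n$ nodes whose fringe subtree has size $k_n$, and the typical height of $\gwn$ is $O(\sqrt n)$. For $b_3$, I would condition on the path from the root to $v$ together with the sizes of the subtrees hanging off it; given that data, $\gw_{n,v}$ and the indicators $I_w$ with $w \notin B_v$ become conditionally independent, and the distortion introduced by the global conditioning $|\gw|=n$ is quantified by Stone's local limit theorem, producing an $O(n^{-1/2})$ correction per spine vertex.

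The main obstacle is uniformity in $\cS \subseteq \tsetkn$: one must verify that each estimate in the Theorem~\ref{intro:thm:fringe:count} argument depends on the target only through its total probability and common size $k_n$, so that replacing $\iverson{\gw_{n,v} = T}$ by $\iverson{\gw_{n,v} \in \cS}$ and $\pi(T)$ by $\pi(\cS)$ preserves the asymptotics. This amounts to linearity of expectation together with the shared size constraint, and yields a bound that tends to zero after taking the supremum in $\cS$. Conclusions (i)--(iii) then follow from Lemma~\ref{intro:lem:poisson:distribution} applied to $X_n = \treecount_{\cA_n}(\gwn)$ and $\mu_n = n\pi(\cA_n)$.
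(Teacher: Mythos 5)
Your guiding observation --- that the single-tree proof should extend because every estimate depends on the target only through its common size $k_n$ and its total probability --- is indeed how the paper proceeds, but the paper implements it \emph{inside} the exchangeable-pair argument: it keeps the subtree-switching coupling and simply redefines $\Itilde_i$ as $\iverson{(\xitilden_i,\ldots,\xitilden_{i+k_n-1})\in\cD(\cA_n)}$ and $\Jtilde_i$ as $\iverson{\xitilden_i+\cdots+\xitilden_{i+k_n-1}=k_n-1}$, and then reruns the computation (now $q_n=\Theta(k_n^{-1/2})$ while $r_n\le(1+o(1))\pi(\tsetkn)=\Theta(k_n^{-3/2})$, so $r_n/q_n\to0$ still). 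Your route through the Chen--Stein local-dependence bound with $B_v$ the ancestors and descendants of $v$ has two genuine gaps. First, these neighbourhoods are random (the standard theorem needs a fixed index set with fixed neighbourhoods) and they are far too large for $b_1$: in $b_1$ one multiplies \emph{marginal} probabilities, so the almost-sure exclusion ``at most one node on an ancestral line has a fringe subtree of size exactly $k_n$'' (which is what makes $b_2=0$) is of no help; each $w\in B_v$ contributes roughly $\pi(\cS)$ and $\E{\sum_v|B_v|}$ is of order $n^{3/2}$ (total path length plus total subtree size), so even after the $\min(1,1/\lambda_n)$ factor with $\lambda_n:=n\pi(\cS)$ one gets $b_1/\lambda_n\approx\sqrt{n}\,\pi(\cS)$, which for $\cS=\tsetkn$ is of order $\sqrt{n}\,k_n^{-3/2}$ and diverges for, e.g., $k_n=\Theta(\log n)$; the required uniformity over $\cS\subseteq\tsetkn$ therefore fails. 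The overlap dependence actually lives in {\scshape dfs} windows of length $O(k_n)$, which is the (deterministic) neighbourhood the paper in effect uses.

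Second, and more seriously, the $b_3$ term is exactly where the difficulty of the conditioning $\{|\gw|=n\}$ sits, and your plan does not control it. The $\sigma$-field you condition on must refine $\sigma(I_w:w\notin B_v)$, and the indicators $I_w$ depend on the \emph{shapes}, not just the sizes, of the subtrees hanging off the spine of $v$; once those shapes (or even just the sizes together with $n$) are given, $|\gw_{n,v}|$ is determined, so the conditional mean of $I_v$ is essentially $\p{\gw_{k_n}\in\cS}$ when that size equals $k_n$ and $0$ otherwise --- in either case it differs from $\E{I_v}\approx\pi(\cS)$ by order $\pi(\cS)$ on most of the probability space. Conditioning on more information is legitimate (Jensen) but here it inflates $b_3$ to order $\lambda_n$, giving a Chen--Stein contribution of order $\sqrt{\lambda_n}\to\infty$. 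What must really be estimated is $\E{\bigl|\E{I_v-\E{I_v}\mid\sigma(I_w:w\notin B_v)}\bigr|}$ under $S_n=n-1$, i.e.\ the weak global dependence between \emph{disjoint} fringe subtrees; the asserted ``$O(n^{-1/2})$ correction per spine vertex'' is never derived, and with $\Theta(\sqrt n)$ spine vertices it would not obviously be small even if it were. This is precisely the step the paper bypasses: in the exchangeable-pair bound the global-dependence error is channelled into $\V{\fringeSkn}$ and controlled by Janson's Lemma 6.2 through Lemma \ref{fringe:lem:tree:count:var}, producing the vanishing terms $r_n/q_n$ and $\bigl(r_n\V{\fringeSkn}/(nq_n^2)\bigr)^{1/2}$. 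Until you supply a genuine bound for that conditional expectation (or switch to the paper's coupling), the proposal does not yield the theorem.
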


The proof of Theorem \ref{intro:thm:fringe:count} is given in Section \ref{sec:fringe}.
It uses many ingredients from previous results on fringe subtrees, especially from
\citet{Janson2014Asym}. (In particular, Lemma 6.2 of \citep{Janson2014Asym} makes the computation of
the variance of \(N_{\cS_n}(\gwn)\) quite easy, which is crucial for the proof.)
The key step of the proof is based on {subtree switching}. It permits the coupling of
two conditional Galton-Watson trees. Then we can apply the exchangeable pair method by
\citet{chatterjee2005}, which is an extension of Stein's method \cite{Stein1972}, to study the
convergence of total variation distance.
This approach can easily be modified to prove Theorem \ref{intro:thm:fringe:set:count}, whose steps we sketch
at the end of Section \ref{sec:fringe}.

\emph{Binary search trees} and \emph{recursive trees} are also well-studied random tree models (see
\citet{drmota2009random}).  Many authors have found results similar to
Theorem \ref{intro:thm:fringe:count} for these two types of trees, see, e.g., \cite{Feng2008, Fuchs2008, devroye1991limit, devroye2002limit, Flajolet1997}.
For recent developments, see \citet{holmgren2014limit}.

We say that a tree \(T\) is \emph{possible} if \(\p{\gw = T} > 0\).  As an application of Theorems
\ref{intro:thm:fringe:count} and \ref{intro:thm:fringe:set:count}, we ask the following
question --- when does \(\gwn\) contain all possible trees within a family of trees (possibly
depending on \(n\)). As shown in subsection \ref{sec:fringe:app:coupon}, this is essentially a variation
of the coupon collector problem.

In subsection \ref{sec:fringe:app:r:ary} we answer the above question for the set of complete \(r\)-ary
trees.  Let \(H_{n,r}\) be the maximal integer such that \(\gwn\) contains all complete \(r\)-ary
trees of height at most \(H_{n,r}\) as fringe subtrees.  Lemma \ref{family:lem:r:ary} shows that
\(H_{n,r} - \log_r \log n\) converges in probability to an explicit constant.

Let \(\tsetplus{k}\) be the set of all possible trees of size at most \(k\).
Let \(\pSetMaxSize=\max\{k: \tsetplus{k} \subseteq \cup_{v \in \gwn} \gw_{n,v}\}\), i.e., \(K_n\) is
the maximal \(k\) such that every tree in \(\tsetplus{k}\) appears in \(\gwn\) as fringe subtrees.
In subsection \ref{sec:fringe:app:all:possible}, we show that, roughly speaking, if the tail of the offspring
distribution does not drop off too quickly, \(K_{n}/\log n\) converges in probability to a positive constant.
Otherwise, we have \(K_{n}/\log n \inprob 0\).
For example, for a random Cayley tree, we have \(K_n \log \log (n)/\log(n) \inprob 1\).
For many well-known Galton-Watson trees, we also give the second order asymptotic term of
\(K_n\).

Non-fringe subtrees are more complicated to analyze.
However since on average fringe subtrees in \(\gwn\) behave like unconditional Galton-Watson trees
when \(n\) is large, the number of non-fringe subtrees of shape \(T\) should be more or
less \(n \p{T \rootat \gw}\).
The following theorem is a precise version of this intuition.
\begin{theorem}
    \label{intro:thm:nonfringe:count}
    Assume Condition \ref{cond:offspring}.
    Let \(\nfpi(T) := \p{T \rootat \gw} \).
    Let \(\nftcount := \sum_{v \in \gwn} \iverson{T \rootat \gw_{n,v}}\).
    Let \(T_n\) be a sequence of trees with \(|T_n|=k_n\) where \(k_n \to \infty\) and \(k_n = o(n)\).
    We have
    \begin{enumerate}[label=(\roman*)]
        \item If \(n \nfpi(T_n) \to 0\), then \(\nftncount \inprob 0\).
        \item If \(n \nfpi(T_n) \to \infty\), then \(\nftncount/(n \nfpi(T_n)) \inprob 1\).
    \end{enumerate}
\end{theorem}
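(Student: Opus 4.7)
The plan is to prove both parts by the moment method: Part (i) by Markov's inequality applied to $\e\nftncount$, and Part (ii) by Chebyshev's inequality. The core estimates to establish are $\e\nftncount = n\nfpi(T_n)(1+o(1))$ and, for Part (ii), $\V{\nftncount} = o((n\nfpi(T_n))^2)$ under the hypothesis $n\nfpi(T_n) \to \infty$.

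For the expectation, I decompose
\[
\nftncount = \sum_{T' \,:\, T_n \rootat T'} \treecount_{T'}(\gwn),
\]
reducing the task to controlling $\e\treecount_{T'}(\gwn)$ for arbitrary $T'$. The standard bijection --- a tree of size $n$ with a marked fringe copy of $T'$ corresponds to a tree of size $n-|T'|+1$ with a marked leaf --- gives
\[
\e \treecount_{T'}(\gwn) = \frac{\pi(T')}{p_0} \cdot \frac{\e\bigl[L(\gw)\, \iverson{|\gw| = n-|T'|+1}\bigr]}{\p{|\gw|=n}},
\]
where $L(\gw)$ denotes the number of leaves of $\gw$. Combined with the local CLT $\p{|\gw|=m}\sim c\, m^{-3/2}$ and the asymptotic $\e[L(\gw)\mid |\gw|=m]\sim p_0 m$, this evaluates to $n\pi(T')(1+o(1))$ uniformly for $|T'|=o(n)$. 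Writing $L(T_n)$ and $I(T_n) = k_n - L(T_n)$ for the numbers of leaves and internal nodes of $T_n$, the identity
\[
\sum_{|T'|=m,\, T_n\rootat T'} \pi(T') = \nfpi(T_n)\cdot \p{|\gw_1|+\cdots+|\gw_{L(T_n)}| = m - I(T_n)},
\]
obtained by grafting i.i.d.\ copies of $\gw$ at the leaves of $T_n$, lets me sum over $m$ and conclude $\e\nftncount = n\nfpi(T_n)(1+o(1))$. Part (i) then follows immediately from Markov's inequality.

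For Part (ii) I compute the variance by expanding
\[
\V{\nftncount} = \e\nftncount + \sum_{u\ne v}\bigl(\p{T_n\rootat\gw_{n,u},\, T_n\rootat\gw_{n,v}} - \p{T_n\rootat\gw_{n,u}}\p{T_n\rootat\gw_{n,v}}\bigr)
\]
and splitting the pair sum according to the ancestral relationship between $u$ and $v$. For pairs where neither is a descendant of the other, the fringe subtrees $\gw_{n,u}$ and $\gw_{n,v}$ are disjoint, so a two-point analogue of the bijection above reduces the joint probability to a near-product form, making the total covariance contribution $o((n\nfpi(T_n))^2)$. For ancestrally related pairs I split further: if $v$ lies within the internal skeleton of $T_n$ rooted at $u$ then there are at most $k_n$ positions of $v$ per $u$, while if $v$ lies strictly below one of the $L(T_n)$ boundary leaves of that skeleton, a three-piece bijection combined with the independence of the descendant subtree from the ambient skeleton handles the covariance. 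Chebyshev's inequality then yields $\nftncount/(n\nfpi(T_n)) \inprob 1$.

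The main obstacle is uniformity in $T_n$, since $T_n$ varies with $n$ and may have up to $k_n$ nodes. Controlling the first-moment correction factor $\sqrt{n/(n-|T'|+1)}$ uniformly across all $T'$ with $T_n\rootat T'$ requires that the grafted sum $|\gw_1|+\cdots+|\gw_{L(T_n)}|$ be $o(n)$ with high probability, which follows from $|\gw|$ lying in the domain of attraction of the $1/2$-stable law. The most delicate step is the ancestrally related case in the variance, where $\{T_n\rootat\gw_{n,u}\}$ and $\{T_n\rootat\gw_{n,v}\}$ are positively correlated through the shared internal structure of $\gw_{n,u}$, and careful accounting is needed to keep the total covariance $o((n\nfpi(T_n))^2)$.
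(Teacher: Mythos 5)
Your overall strategy (first moment plus Markov for (i), second moment plus Chebyshev for (ii)) matches the paper's, and your graft-and-sum computation of \(\E{\nftncount}\) is a workable alternative to the paper's exact formula (Lemma \ref{nonfringe:lem:expectation}), but only after a preliminary reduction of the kind the paper performs in Lemmas \ref{nonfringe:lem:v:upper} and \ref{nonfringe:lem:l:upper}. Your claim that the grafted total \(|\gw_1|+\cdots+|\gw_{L(T_n)}|\) is \(o(n)\) whp requires \(L(T_n)=o(\sqrt n)\), since a sum of \(\ell\) i.i.d.\ copies of \(|\gw|\) is of order \(\ell^2\); this does not follow from \(k_n=o(n)\) alone. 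Likewise the uniform estimate \(\E{\treecount_{T'}(\gwn)}=n\pi(T')(1+o(1))\) holds only for \(|T'|\) well below \(n\), while your decomposition ranges over \(|T'|\) up to \(n\). For part (i) an upper bound \(\E{\nftncount}=O(n\nfpi(T_n))\) suffices, so this half is repairable once you first dispose of trees with \(v(T_n)\) or \(\ell(T_n)\) too large, exactly as the paper does.

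The genuine gap is in the variance bound for ancestrally related pairs in which \(v\) sits at an internal node of the skeleton of the non-fringe copy of \(T_n\) at \(u\); this is precisely the case you flag as delicate but do not resolve. Your stated plan, ``at most \(k_n\) positions of \(v\) per \(u\),'' combined with any bound of the joint probability by \(\p{T_n\rootat\gw_{n,u}}\approx\nfpi(T_n)\), yields a contribution of order \(n\,k_n\,\nfpi(T_n)\) to the second moment, and you need it to be \(o((n\nfpi(T_n))^2)\), i.e.\ \(k_n=o(n\nfpi(T_n))\) --- which is not implied by the hypotheses. Under \(n\nfpi(T_n)\to\infty\) one is forced to \(v(T_n)=O(\log n)\), but \(n\nfpi(T_n)\) may grow as slowly as, say, \(\log\log n\), while \(k_n\) grows polynomially; even counting only the \(v(T_n)\) internal positions, \(n\,v(T_n)\,\nfpi(T_n)\) is of order \(\log n\cdot\log\log n\), which dwarfs \((n\nfpi(T_n))^2\). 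Note also that these events are positively correlated, so the covariance cannot be discarded. The missing idea is the heart of the paper's argument (Lemmas \ref{nonfringe:lem:factorial:two} and \ref{nonfringe:lem:factorial:two:asympt}): a nested pair forces a non-fringe occurrence of a spliced tree \(T'\in\Toplus\), obtained by replacing a fringe subtree of \(T_n\) at an internal node by a fresh copy of \(T_n\); if the splice leaves at least \(v(T_n)/2\) extra internal nodes then \(\nfpi(T')\le\nfpi(T_n)\pmax^{v(T_n)/2}\), and otherwise for each depth \(i\) there is at most one admissible splice point, with \(\nfpi(T'_i)\le\nfpi(T_n)\pmax^{i}\); summing the geometric series shows the whole overlapping contribution is \(O(n\nfpi(T_n))=o((n\nfpi(T_n))^2)\) exactly because \(n\nfpi(T_n)\to\infty\). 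Without such a decay-in-overlap argument the Chebyshev step cannot be closed, so the proposal as written does not prove part (ii).
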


\citet{chyzak2008distribution} studied non-fringe subtrees for various random trees, including
\emph{simply generated trees}.
They proved that if for all \(n\)
we have \(T_n = T\) where \(T\) is fixed, then \(\nftncount\) has a central limit theorem.
However, Theorem \ref{intro:thm:nonfringe:count} cannot be simply derived from their result as our
\(T_n\) depends upon \(n\).

\begin{myRemark*}
    It is tempting to try to prove that if \(n \nfpi(T_n) \to \mu \in (0,\infty)\), then we have
    \(\nftreecount_{T_n}(\gwn) \inlaw \Po(\mu)\), which is true for fringe subtrees.
    Unfortunately, this is not true in general. See Lemma \ref{nonfringe:lem:line} in Section
\ref{sec:nfringe:asympt}.
\end{myRemark*}

In Section \ref{sec:nfringe}, we give a proof of Theorem \ref{intro:thm:nonfringe:count} and apply it to
study the maximal complete \(r\)-ary non-fringe subtree in \(\gwn\). The paper ends with some open questions
in Section~\ref{sec:open}.

\section{Notations and Preliminaries}

\label{sec:pre}

In this section we list some well-known results regarding conditional Galton-Watson trees and
Poisson approximations.

\subsection{Conditional Galton-Watson trees}

Given a tree \(T\) of size \(k\), let \(v_1, \ldots, v_k\) be the nodes of \(T\) in {\scshape dfs}
(depth-first-search) order.
Let \(d_i\) be the \emph{degree} (the number of children) of \(v_i\).
We call \( (d_1, d_2, \ldots, d_k)\) the \emph{preorder degree sequence} of \(T\).
Let \(\N := \{1,2,\ldots\}\) and let \(\N_{0} := \{0\} \cup \N\).
It is well-known that (see \citet[lem.\ 15.2]{janson2012simply}):
\begin{lemma}
    \label{pre:lem:tree:deg:seq}
    A sequence \( (d_1,d_2,\ldots,d_k) \in \N_{0}^k\) is the preorder degree sequence of some tree if and only if it
satisfies
\begin{equation}
    \begin{cases}
        \sum_{i=1}^{j} d_i \ge j,  \qquad \qquad (1 \le j \le k-1) \\
        \sum_{i=1}^{k} d_i = k-1.
    \end{cases}
    \label{pre:eq:deg:seq}
\end{equation}
\end{lemma}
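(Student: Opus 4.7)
The plan is to prove this via the standard DFS/Łukasiewicz-path correspondence, handling the two directions separately by induction on the time step and on $k$ respectively.

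For the forward (necessity) direction, I would fix a tree $T$ of size $k$ with preorder traversal $v_1,\ldots,v_k$ and track the "pending count" $P_j := 1 + \sum_{i=1}^{j} d_i - j$, which counts the nodes that have been discovered as children of some $v_i$ ($i\le j$) but not yet visited in DFS. A short induction on $j$, using that processing $v_j$ removes it from the pending pool and adds its $d_j$ children, shows that $P_j$ really equals this quantity. Since DFS must be able to pick a next node whenever $j<k$, we need $P_j \ge 1$, giving the inequality $\sum_{i=1}^{j} d_i \ge j$. At $j=k$ the traversal terminates with nothing left pending, so $P_k = 0$, which is the equality $\sum_{i=1}^k d_i = k-1$.

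For the reverse (sufficiency) direction, I would construct a tree $T$ from $(d_1,\ldots,d_k)$ by simulating the same DFS procedure in reverse. Maintain a stack initialized with a single root; at step $i$, pop the top node, label it $v_i$, attach $d_i$ fresh children to it, and push those children onto the stack in right-to-left order so that they will be visited in left-to-right order. The condition $\sum_{i=1}^{j} d_i \ge j$ for $j\le k-1$ guarantees that after step $j$ the stack size $1+\sum_{i=1}^{j}(d_i-1)$ is still at least $1$, so the next pop is legal; the equality at $j=k$ ensures the stack is empty after step $k$, so exactly $k$ nodes are produced and the preorder degree sequence of the resulting tree is $(d_1,\ldots,d_k)$ by construction.

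There is no real obstacle here, since the lemma is a classical and purely combinatorial fact; the only mild subtlety is being careful that the pending-count $P_j$ defined in the forward direction matches the stack size in the construction of the backward direction, so that the two sides are genuinely inverse operations. Alternatively, one may prove sufficiency by induction on $k$: if $j^{*}$ is the smallest index with $\sum_{i=1}^{j^{*}} d_i = j^{*}$, then by Lemma hypothesis $(d_2,\ldots,d_{j^{*}})$ and $(d_{j^{*}+1},\ldots,d_k)$ each satisfy the conditions on their own (after suitable re-indexing), so they are preorder degree sequences of two smaller trees, which can be assembled by making the first a subtree hanging off the first child of the root and appending the second as the remaining siblings; this inductive route avoids any explicit stack machinery and would be my fallback if a cleaner presentation is preferred.
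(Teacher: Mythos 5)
Your main argument is correct and complete: the necessity direction via the pending-count $P_j = 1+\sum_{i=1}^{j}d_i-j$ and the sufficiency direction via the explicit stack construction are the standard DFS (\L{}ukasiewicz-path) proof of this fact. The paper itself does not prove the lemma at all --- it simply cites Janson's survey (lem.~15.2 of \emph{Simply generated trees\dots}), so your write-up supplies a proof where the paper only gives a reference; nothing in the paper's later use of the lemma (e.g.\ Corollary~\ref{cor:pre:tree:deg:seq} and the rotation argument in Lemma~\ref{pre:lem:tree:count:expectation}) requires anything beyond what you establish.

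One caution about your ``fallback'' induction: as stated it is wrong. If $j^{*}$ is the smallest index with $\sum_{i=1}^{j^{*}}d_i=j^{*}$, the block $(d_2,\ldots,d_{j^{*}})$ is in general \emph{not} a preorder degree sequence: its entries sum to $j^{*}-d_1$, whereas a sequence of length $j^{*}-1$ would need sum $j^{*}-2$, so the claim only has a chance when $d_1=2$. Concretely, for $(d_1,\ldots,d_4)=(3,0,0,0)$ (a root with three leaf children) one gets $j^{*}=3$ and the block $(0,0)$, which violates the sum condition. The index $j^{*}$ marks the point where the pending count first drops to $1$, i.e.\ the start of the \emph{last} subtree of the root; the correct inductive splits are therefore either $(d_1-1,d_2,\ldots,d_{j^{*}})$ together with $(d_{j^{*}+1},\ldots,d_k)$ (delete the last root subtree), or a decomposition of $(d_2,\ldots,d_k)$ into $d_1$ consecutive blocks at successive first-passage times of the walk $j\mapsto\sum_{i\le j}d_i-j$. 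Since this alternative is only offered as a backup, it does not affect the validity of your primary proof, but it should be fixed or dropped.
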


Figure \ref{pre:fig:tree:deg:seq} gives a demonstration of Lemma \ref{pre:lem:tree:deg:seq}.
\begin{figure}[ht!]
  \centering
    \begin{tikzpicture}
    \node[anchor=south west,inner sep=0] at (0,0) {\includegraphics{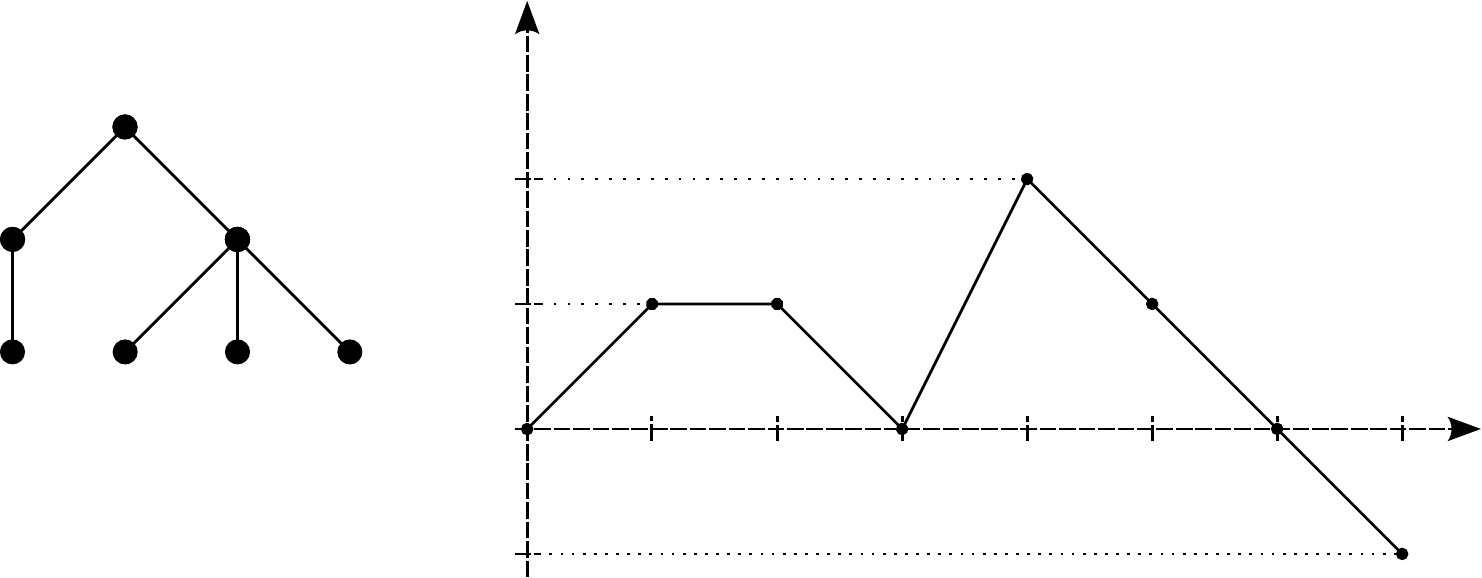}};
    \node at (2,1.2) {\small The degree sequence};
    \node at (2,0.7) {\small $(d_1, \ldots,d_7)=(2,1,0,3,0,0,0)$};
    \node at (12,5) {\small $\sum_{i=1}^j d_i-j$ for $j=1,\ldots,7$};
    \coordinate (x) at (5.4,0.8);
    \coordinate (y) at (4.9,1.5);
    \foreach \x in {0,...,6}
       {
           \pgfmathtruncatemacro{\label}{\x+1};
           \pgfmathsetmacro{\xshift}{1.26*(\x+1)};
           \node at ($(x)+(\xshift,0)$) {\small \label};
        }
    \foreach \y in {-1,...,3}
       {
           \pgfmathtruncatemacro{\label}{\y};
           \pgfmathsetmacro{\yshift}{1.26*(\y)};
           \node at ($(y)+(0,\yshift)$) {\small \label};
        }
    \end{tikzpicture}
    \caption{Example of preorder tree degree sequence.}
    \label{pre:fig:tree:deg:seq}
\end{figure}

Let \(\cD_{k} \subseteq \N_0^k\) be the set of all preorder degree sequences of length \(k\).
Observe:
\begin{corollary}
    \label{cor:pre:tree:deg:seq}
If \( (d_1, d_2, \ldots, d_k) \in \cD_k \), then it is impossible that
there exists \(1 \le k' < k\) such that \( (d_1, d_2, \ldots, d_{k'}) \in \cD_{k'}\).
\end{corollary}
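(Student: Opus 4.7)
The plan is to derive the corollary directly from Lemma \ref{pre:lem:tree:deg:seq} by a one-line contradiction. Suppose for contradiction that $(d_1, \ldots, d_k) \in \cD_k$ and that there exists some $1 \le k' < k$ with $(d_1, \ldots, d_{k'}) \in \cD_{k'}$.

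Applying the second condition of \eqref{pre:eq:deg:seq} to the prefix of length $k'$, we get $\sum_{i=1}^{k'} d_i = k' - 1$. On the other hand, since $(d_1, \ldots, d_k) \in \cD_k$ and $k' \le k - 1$, the first (inequality) condition of \eqref{pre:eq:deg:seq} applied at index $j = k'$ yields $\sum_{i=1}^{k'} d_i \ge k'$. These two relations are incompatible, giving the desired contradiction.

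There is no real obstacle here; the whole content of the corollary is a direct arithmetic clash between the partial-sum inequality $\ge j$ that must hold at every intermediate $j < k$ and the equality $= k'-1$ that would be forced if a strict prefix were itself a valid preorder degree sequence. The intuitive meaning (which I would mention only if space allows) is that the {\scshape dfs} walk associated with a plane tree hits its terminating level $-1$ only at the very end, so it cannot also end at any earlier index.
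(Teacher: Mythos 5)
Your proof is correct and is exactly the observation the paper intends (the corollary is stated with ``Observe:'' and no written proof): the prefix condition $\sum_{i=1}^{k'} d_i \ge k'$ from Lemma \ref{pre:lem:tree:deg:seq} applied to the full sequence at $j=k'$ directly contradicts the equality $\sum_{i=1}^{k'} d_i = k'-1$ that a valid prefix sequence would require. Nothing further is needed.
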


Let \( \xinvec := (\xin_1, \xin_2,\ldots, \xin_n) \) be the preorder degree sequence of \(\gwn\).
Let \( \xitildenvec := (\xitilden_1, \xitilden_2,\ldots, \xitilden_n)\) be a uniform random cyclic
rotation of
\( \xinvec \).
Let \(\xi_{1}, \xi_2,\ldots\) be i.i.d.\ copies of \(\xi\).
Let \(S_n := \sum_{i=1}^n \xi_{i}\).
The next lemma is a well-known connection between \( \xitildenvec\) and \( \xilist\)
(see, e.g., \citet{otter1948number}, \citet{Kolchin1986random}, \citet{dwass1969total}
and \citet{pitman1998enumerations}).
For a
complete proof, see \citet[cor.\ 15.4]{janson2012simply}.

\begin{lemma}
    \label{pre:lem:tree:deg:seq:rotation}
    Assume that \(\p{|\gw|=n} > 0\).
    We have
    \[
        (\xitilden_1, \xitilden_2,\ldots, \xitilden_n ) \eql \left(\xilist~|~S_n = n-1 \right),
    \]
    where
    \(\eql\) denotes ``identically distributed''
    and
    the right-hand-side denotes \( (\xilist)\) restricted to the event that \(S_n = n-1\).
\end{lemma}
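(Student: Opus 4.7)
The plan is to combine the classical cycle lemma with the \textsc{dfs}-encoding of the Galton-Watson tree.

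First I would compute the exact law of $\xinvec$. By Lemma~\ref{pre:lem:tree:deg:seq}, the map sending a tree of size $n$ to its preorder degree sequence is a bijection onto $\cD_n$; since an unconditional Galton-Watson tree with preorder degree sequence $\mathbf{d} = (d_1,\ldots,d_n)$ is realized with probability $\prod_{i=1}^n p_{d_i}$, one obtains
$$\p{\xinvec = \mathbf{d}} = \frac{\prod_{i=1}^n p_{d_i}}{\p{|\gw|=n}}, \qquad \mathbf{d} \in \cD_n.$$

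Second I would establish the cycle lemma in the following form: any $\mathbf{d} \in \N_0^n$ with $\sum_i d_i = n-1$ has exactly one cyclic rotation in $\cD_n$. The proof consists of studying the walk $W_j := \sum_{i=1}^j (d_i - 1)$, which satisfies $W_0 = 0$ and $W_n = -1$. Letting $i_\star$ be the smallest index at which $W$ attains its minimum, one checks directly that the cyclic rotation starting at position $i_\star + 1$ satisfies the partial-sum conditions of Lemma~\ref{pre:lem:tree:deg:seq}, while no other rotation does.

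Combining the two ingredients, fix any $\mathbf{d} \in \N_0^n$ with $\sum_i d_i = n-1$ and let $\mathbf{d}^\star$ denote its unique cyclic rotation lying in $\cD_n$. Since the rotation index is uniform on $\{0,\ldots,n-1\}$ and independent of $\xinvec$, and since $\xinvec$ is supported on $\cD_n$, exactly one rotation contributes, giving
$$\p{\xitildenvec = \mathbf{d}} = \frac{1}{n}\p{\xinvec = \mathbf{d}^\star} = \frac{1}{n}\cdot\frac{\prod_i p_{d_i}}{\p{|\gw|=n}},$$
where the last equality uses that $\mathbf{d}^\star$ and $\mathbf{d}$ share the same multiset of entries. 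On the other hand, by independence,
$$\p{(\xi_1,\ldots,\xi_n) = \mathbf{d} \,\big|\, S_n = n-1} = \frac{\prod_i p_{d_i}}{\p{S_n = n-1}}.$$
Both expressions define probability mass functions on $\{\mathbf{d} \in \N_0^n : \sum_i d_i = n-1\}$; summing over this set forces the normalizing constants to agree, which yields Dwass's identity $n\p{|\gw|=n} = \p{S_n = n-1}$ and hence equality of the two laws.

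The main obstacle is the cycle lemma, since everything else is bookkeeping. Fortunately the walk-minimum construction sketched above gives a short and standard proof, so the overall argument is quite compact; the substance lies in a combinatorial identity rather than in any probabilistic estimate.
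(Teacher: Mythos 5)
Your proof is correct, and it is precisely the standard argument that the paper itself omits by deferring to the cited reference (Janson, cor.\ 15.4): bijectivity of the preorder encoding onto \(\cD_n\), the Dvoretzky--Motzkin cycle lemma proved via the first minimum of the walk \(W_j=\sum_{i\le j}(d_i-1)\), and comparison of the two mass functions, both proportional to \(\prod_i p_{d_i}\), which in passing also delivers Dwass's identity \(n\p{|\gw|=n}=\p{S_n=n-1}\). The only point you use implicitly in the step \(\p{\xitildenvec=\mathbf{d}}=\tfrac1n\p{\xinvec=\mathbf{d}^\star}\)---that exactly one \emph{shift amount} carries \(\mathbf{d}^\star\) to \(\mathbf{d}\), i.e.\ no sequence summing to \(n-1\) has a nontrivial cyclic period---is already settled by your argument that the valid shift is the unique first-minimum index (or by noting \(\gcd(n,n-1)=1\)), so there is no gap.
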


Let \(p_i := \p{\xi = i}\).
Let \(h\) be the \emph{span} of \(\xi\), i.e.,
\(
    h := {\gcd}\{i \ge 1: p_i > 0\}.
\)
We recall the following result (see \citet[(4.3)]{Janson2014Asym} or \citet{Kolchin1986random}):
\begin{lemma}
    \label{pre:lem:prob:T:eql:n}
                Assume Condition \ref{cond:offspring}.
    We have
    \[
        \p{|\gw|=n} \sim \frac{h}{\sqrt{2 \pi \sigma^2}} n^{-3/2},
    \]
    as \(n \to \infty\) with \(n-1 \equiv 0 \pmod{h}\).
\end{lemma}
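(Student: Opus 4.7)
The plan is to reduce the lemma to a classical local limit theorem for i.i.d.\ integer-valued sums, using the combinatorial bijection between plane trees and their preorder degree sequences that is already recorded in Lemmas \ref{pre:lem:tree:deg:seq} and \ref{pre:lem:tree:deg:seq:rotation}.

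First I would establish the Otter--Dwass / Kemperman identity
\[
\p{|\gw|=n} = \frac{1}{n}\,\p{S_n = n-1}.
\]
This can be read directly off Lemma \ref{pre:lem:tree:deg:seq:rotation}: pick any $(d_1,\ldots,d_n) \in \cD_n$ and compute $\p{\xitildenvec = (d_1,\ldots,d_n)}$ in two ways. On one side, Lemma \ref{pre:lem:tree:deg:seq:rotation} gives it as $\prod_i p_{d_i}/\p{S_n=n-1}$. On the other, $\xitildenvec$ is a uniform cyclic rotation of $\xinvec$: by the cycle lemma exactly one of the $n$ rotations of the sequence lies in $\cD_n$, and since the sum $n-1$ is coprime to $n$ those $n$ rotations are distinct for $n \ge 2$, so this probability also equals $\tfrac{1}{n}\prod_i p_{d_i}/\p{|\gw|=n}$. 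Equating the two and cancelling $\prod_i p_{d_i}$ gives the identity.

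Next, I would apply Gnedenko's local central limit theorem to $S_n$. Under Condition \ref{cond:offspring}, the $\xi_i$ are integer-valued with $\e \xi = 1$, $\V{\xi} = \sigma^2 \in (0,\infty)$, and the support of $\xi$ generates a lattice of step $h$. The subsequence hypothesis $n-1 \equiv 0 \pmod{h}$ is exactly what places $n-1$ on the support of $S_n$, and the local CLT then yields
\[
\p{S_n = n-1} = \frac{h}{\sqrt{2\pi\sigma^2 n}}\,\exp\!\left(-\frac{1}{2\sigma^2 n}\right)(1+o(1)) = \frac{h}{\sqrt{2\pi\sigma^2 n}}\,(1+o(1)),
\]
because the exponent tends to $0$. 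Combining with the first step produces $\p{|\gw|=n} \sim \frac{h}{\sqrt{2\pi\sigma^2}}\,n^{-3/2}$, as claimed.

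Neither step is a real obstacle — both ingredients are classical and already appear in the literature cited by the paper (e.g.\ \citet{Kolchin1986random}, \citet{janson2012simply}). If anything requires care, it is the lattice bookkeeping: one must verify that the span $h$ appearing in the local CLT constant is exactly the span of $\xi$ used to define the admissible subsequence, so that $n-1$ is genuinely on the support of $S_n$ rather than giving a trivially zero probability.
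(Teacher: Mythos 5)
Your proof is correct. The paper itself gives no proof of this lemma---it is recalled from \citet[(4.3)]{Janson2014Asym} and \citet{Kolchin1986random}---and your argument is exactly the classical one underlying those references: the Otter--Dwass identity \(\p{|\gw|=n}=\tfrac{1}{n}\p{S_n=n-1}\), obtained from the cycle lemma (your coprimality remark correctly settles the distinctness of the \(n\) rotations), followed by Gnedenko's local limit theorem along the subsequence \(n-1\equiv 0 \pmod h\). The lattice bookkeeping you flag is the right point to check and it is harmless here: Condition \ref{cond:offspring} forces \(p_0>0\) (otherwise \(\xi\ge 1\) a.s.\ and \(\e\xi=1\) would give \(\V{\xi}=0\)), so the support of \(\xi\) lies in \(h\mathbb{Z}\) with \(h\) maximal, the span in the local limit theorem coincides with the span defining the admissible subsequence, and \(n-1\) is genuinely a lattice point for \(S_n\).
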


The following lemma is a special case of \cite[lem.\ 5.1]{Janson2014Asym}.
We nonetheless give a short proof for later reference in the paper.
\begin{lemma}
    \label{pre:lem:tree:count:expectation}
    Assume \(\p{|\gw| = n}>0\).
    Let \(T \in \cS_k\) with \(1 \le k \le n\).
    \begin{enumerate}[label=(\roman*)]
        \item Let \(\treecount_T(\gwn) := \sum_{v \in \gwn}\iverson{\gw_{n,v} = T}\).  Then
        \[
            \frac{\E{\treecount_T(\gwn)}}{n}  =  \pi(T) \frac{\p{S_{n-k} = n-k}}{\p{S_n = n-1}}.
        \]
        \item Let \(\treecount_{\cS_k}(\gwn) = \sum_{v \in \gwn}\iverson{|\gw_{n,v}| = k}\). Then
        \[
            \frac{\E{\treecount_{\cS_k}(\gwn)}}{n} =  \pi(\cS_k) \frac{\p{S_{n-k} = n-k}}{\p{S_n = n-1}}.
        \]
    \end{enumerate}
\end{lemma}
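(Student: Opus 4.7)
The plan is to reduce both claims to a direct calculation using the cyclic rotation identity (Lemma \ref{pre:lem:tree:deg:seq:rotation}) and the uniqueness given by Corollary \ref{cor:pre:tree:deg:seq}.

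First I would rewrite the expectation in probabilistic form. Since $\treecount_T(\gwn)/n$ equals the conditional probability that a uniformly chosen node $v$ of $\gwn$ satisfies $\gw_{n,v} = T$, we have
\[
    \frac{\E{\treecount_T(\gwn)}}{n} = \p{\gw_{n,*} = T}.
\]
I would then describe $\gw_{n,*}$ via the degree sequence: if $\xinvec$ is the preorder degree sequence of $\gwn$ and $v$ is uniform, then the fringe subtree at $v$ has as its preorder degree sequence the shortest prefix of the cyclic shift of $\xinvec$ starting at position $v$ that lies in some $\cD_{k'}$. Choosing a uniform starting position is the same as passing to the uniform cyclic rotation $\xitildenvec$. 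So if $T$ has preorder degree sequence $(d_1,\dots,d_k)$, then by Corollary \ref{cor:pre:tree:deg:seq} no strictly shorter prefix of $\xitildenvec$ can be a valid degree sequence, and therefore
\[
    \p{\gw_{n,*} = T} = \p{(\xitilden_1,\dots,\xitilden_k) = (d_1,\dots,d_k)}.
\]

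Second, I would apply Lemma \ref{pre:lem:tree:deg:seq:rotation} to identify the right-hand side with $\p{(\xi_1,\dots,\xi_k) = (d_1,\dots,d_k) \mid S_n = n-1}$. Factoring and using independence of $(\xi_1,\dots,\xi_k)$ and $(\xi_{k+1},\dots,\xi_n)$ together with the fact that $\sum_{i=1}^k d_i = k-1$ (from Lemma \ref{pre:lem:tree:deg:seq}), this equals
\[
    \frac{\prod_{i=1}^k p_{d_i} \cdot \p{S_{n-k}=n-k}}{\p{S_n=n-1}}
    = \pi(T)\,\frac{\p{S_{n-k}=n-k}}{\p{S_n=n-1}},
\]
where I use $\pi(T) = \prod_{i=1}^k p_{d_i}$. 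This proves (i).

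For (ii), I would simply sum (i) over all trees $T$ of size $k$, using $\treecount_{\cS_k}(\gwn) = \sum_{T \in \cS_k} \treecount_T(\gwn)$ and $\pi(\cS_k) = \sum_{T \in \cS_k} \pi(T)$; the ratio involving $\p{S_{n-k}=n-k}$ is independent of $T$ and factors out. The only subtle point, and the one I would take care to justify, is the appeal to Corollary \ref{cor:pre:tree:deg:seq}: without it, one could worry that some proper prefix of $\xitildenvec$ was already a valid degree sequence and $\gw_{n,*}$ therefore had smaller size, but that corollary rules this out. The rest is bookkeeping.
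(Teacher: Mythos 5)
Your proposal is correct and follows essentially the same route as the paper: both reduce the count to the uniform cyclic rotation $\xitildenvec$, use Corollary \ref{cor:pre:tree:deg:seq} to rule out shorter (or wrapped) prefixes so that the event matches $(\xitilden_1,\dots,\xitilden_k)=(d_1,\dots,d_k)$, then apply Lemma \ref{pre:lem:tree:deg:seq:rotation} together with independence and $\sum_{i=1}^k d_i = k-1$ to factor out $\p{S_{n-k}=n-k}/\p{S_n=n-1}$, and obtain (ii) by summing over $\cS_k$. The only cosmetic difference is that you phrase the key step via $\p{\gw_{n,*}=T}$ and the shortest-valid-prefix description, whereas the paper works with indicator variables $I_i$ and checks the wraparound indices explicitly; the substance is identical.
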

\begin{proof}
    Let \( (d_1, d_2, \ldots{}, d_k) \) be the preorder degree sequence of \(T\).
            Recall that \( (\xinlist)\) is the preorder degree sequence of \(\gwn\).
    Let
    \[
        I_{i} =
        \iverson{\xin_i = d_1, \xin_{i+1} = d_2, \ldots, \xin_{i+k-1} = d_{k}},
    \]
    where the indices are taken modulo \(n\).

    Note that if \(n-k + 1< i \le n\), then it is impossible that \(I_{i} = 1\), because the
    length of the preorder degree sequence of the fringe subtree \(\gw_{n, v_i}\) must be strictly
    less than \(k\). Therefore, if \(I_{i} > 1\) and \(n-k+1< i \le n\), then there exists a \(k' <
    k\) such that \( (d_1, d_2,\ldots, d_k')\) is also a preorder degree sequence, which is impossible by
    Corollary \ref{cor:pre:tree:deg:seq}.

    Therefore for all \(1 \le i \le n\),
    \(I_{i}= \iverson{T_{v_i} = T} \) and \(\treecount_{T}(\gwn) = \sum_{i=1}^{n} I_i\).
    Recalling that \( (\xitildenlist) \) is a uniform random rotation of \( (\xinlist)
    \) and using Lemma \ref{pre:lem:tree:deg:seq:rotation}, we have
    \begin{align*}
        \E{\treecount_{T}(\gwn)}
        &
        =
        \E{\sum_{i=1}^{n} I_i}
        =
        \sum_{i=1}^{n} \p{\xin_i = d_1, \xin_{i+1} = d_2, \ldots, \xin_{i+k-1} = d_{k}}
                                \\
        & =
        \sum_{i=1}^{n} \p{\xi_i = d_1, \xi_{i+1} = d_2, \ldots, \xi_{i+k-1} = d_{k}~|~S_n = n-1}
        \\
        & =
        n \p{\xi_1 = d_1, \xi_{2} = d_2, \ldots, \xi_{k} = d_{k}~|~S_n = n-1}
        \\
        & =
        n
        \frac{
            \p{
                \left[\xi_1 = d_1, \xi_{2} = d_2, \ldots, \xi_{k} = d_{k} \right]
                \cap
                \left[S_{n} = n-1 \right]
            }
        }
        {
            \p{S_{n} = n-1}
        }
        .
    \end{align*}
    Since \( (d_1, \ldots d_k) \) is a preorder degree sequence, \(\sum_{i=1}^{k} d_i = k-1\) by Lemma
    \ref{pre:lem:tree:deg:seq}.
    Therefore, using the fact that \( \xilist\) are independent, the last expression equals
    \begin{align*}
        n \frac{
            \p{
                \left[\xi_1 = d_1, \xi_{2} = d_2, \ldots, \xi_{k} = d_{k} \right]
                \cap
                \left[S_{n-k} = n-k \right]
            }
        }
        {
            \p{S_{n} = n-1}
        }
        =
        n \p{\gw = T} \frac{\p{S_{n-k} = n-k}}{\p{S_n = n-1}}
        .
    \end{align*}
    Thus part (i) is proved. Part (ii) follows by summing the equality in (i) over all \(T \in
    \cS_k\).
\end{proof}

The following approximations are useful for estimating the expectation and the variance of the
number of fringe subtrees.
\begin{lemma}[Lemma 5.2 and 6.2 of \citet{Janson2014Asym}]
    \label{pre:lem:prob:sum:of:iid}
    Assume Condition \ref{cond:offspring}.
    We have: \\
    (i) Uniformly for all \(k\) with \(1 \le k \le n/2\),
    \begin{align*}
        \frac{\p{S_{n-k} = n-k}}{\p{S_{n} = n-1}} = 1
        + \bigO{\frac{k}{n}}
        + o\left( n^{-1/2} \right).
    \end{align*}
    (ii) Uniformly for all \(k\) with \(1 \le k \le n/4\),
    \begin{align*}
        \frac{\p{S_{n-2k}=n-2k+1}}{\p{S_n=n-1}}
        -
        \left(
        \frac{\p{S_{n-k}=n-k}}{\p{S_{n}=n-1}}
        \right)^{2}
        =
        O\left( \frac{1}{n} \right)
        + O\left( \frac{k}{n^{3/2}} \right)
        + O\left( \frac{k^{2}}{n^2} \right)
        .
    \end{align*}
\end{lemma}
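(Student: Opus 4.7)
My plan is to apply the local central limit theorem (Gnedenko's LCLT) to $S_m = \xi_1 + \cdots + \xi_m$ and then Taylor expand the resulting ratios.  Under Condition~\ref{cond:offspring}, $\xi - 1$ has mean zero, finite variance $\sigma^2$, and is lattice valued with span $h$ on the appropriate coset, so the LCLT supplies, uniformly over $x$ in the support of $S_m$,
\[
    \p{S_m = x} = \frac{h}{\sqrt{2\pi\sigma^2 m}} \exp\!\left(-\frac{(x - m)^2}{2\sigma^2 m}\right) + o(m^{-1/2}),
\]
as $m \to \infty$.  Specializing at $(m,x) = (n,n-1)$ and using Dwass's identity $\p{|\gw| = n} = n^{-1}\p{S_n = n-1}$ recovers Lemma~\ref{pre:lem:prob:T:eql:n}, so the LCLT is consistent with what the paper already assumes.

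For part (i), I would apply the LCLT at $(m,x) = (n, n-1)$, where the exponent is $-1/(2\sigma^2 n) = O(1/n)$, and at $(m,x) = (n-k, n-k)$, where the exponent vanishes.  In each case the Gaussian factor is $1 + O(1/n)$ and is absorbed into the remainder; for $k \le n/2$ the remainder $o((n-k)^{-1/2})$ is also $o(n^{-1/2})$ uniformly in $k$.  Dividing yields
\[
    \frac{\p{S_{n-k} = n-k}}{\p{S_n = n-1}} = \sqrt{\tfrac{n}{n-k}}\,(1 + o(1)),
\]
and the Taylor expansion $\sqrt{n/(n-k)} = (1 - k/n)^{-1/2} = 1 + O(k/n)$ for $k/n \le 1/2$ then finishes part (i).

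For part (ii), the only extra input is the LCLT at $(m,x) = (n-2k, n-2k+1)$, whose exponent is $O(1/n)$ for $k \le n/4$.  The key is to Taylor expand both main parts of the ratio and its square to second order,
\[
    \sqrt{\tfrac{n}{n-2k}} = 1 + \tfrac{k}{n} + \tfrac{3k^2}{2n^2} + O((k/n)^3), \qquad \tfrac{n}{n-k} = 1 + \tfrac{k}{n} + \tfrac{k^2}{n^2} + O((k/n)^3),
\]
observe that the constant and linear terms cancel, and conclude that the difference of the main parts is $O(k^2/n^2)$ for $k \le n/4$.  The residual LCLT errors, once propagated through a square, a ratio, and a difference, contribute the remaining $O(1/n) + O(k/n^{3/2})$ pieces after routine algebra.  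The main obstacle is accounting rather than conceptual: I must propagate the $o(m^{-1/2})$ LCLT remainder through these operations without losing the sharpness coming from cancellation of the $k/n$ terms, and this is precisely what forces the second-order Taylor expansion in part (ii), whereas the cruder $1 + O(k/n)$ suffices in part (i).
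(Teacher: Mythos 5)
There is a genuine gap, and it sits exactly where you appeal to ``routine algebra.'' (Note also that the paper does not prove this lemma at all: it is imported verbatim from Lemmas 5.2 and 6.2 of \citet{Janson2014Asym}, so the comparison is with Janson's proof.) The whole content of the statement is the fineness of the remainders, and the plain Gnedenko LCLT cannot deliver them. The LCLT you invoke has an \emph{additive} error $o(m^{-1/2})$, while the probabilities themselves are of order $m^{-1/2}$; so each application carries a \emph{relative} error that is only $o(1)$. Applying it separately to numerator and denominator therefore gives
\[
\frac{\p{S_{n-k}=n-k}}{\p{S_{n}=n-1}}
=\sqrt{\tfrac{n}{n-k}}\,\bigl(1+o(1)\bigr)
=1+\bigO{\tfrac{k}{n}}+o(1),
\]
which is far weaker than $1+\bigO{k/n}+o(n^{-1/2})$: the $o(m^{-1/2})$ remainders at the two different values of $m$ (and of $x$) are unrelated quantities, so no cancellation between them is available, and taking squares, ratios and differences in part (ii) cannot compress an $o(1)$ relative error into $O(1/n)+O(k/n^{3/2})+O(k^{2}/n^{2})$. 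You also cannot repair this with an Edgeworth correction, since Condition \ref{cond:offspring} assumes only $\V{\xi}<\infty$ and no third moment. The loss is not cosmetic: the paper multiplies the part (i) error by $\sqrt{n\pi(T)}$ in the proof of Theorem \ref{intro:thm:fringe:count}, and feeds part (ii) into the variance bound of Lemma \ref{fringe:lem:tree:count:var}; with $o(1)$ in place of the stated remainders both of those steps fail when $n\pi(T)\to\infty$.

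The extra idea needed (and used by Janson) is to compare the two probabilities inside a \emph{single} Fourier inversion integral rather than approximating each one separately. Writing $\psi(t):=\e e^{it(\xi-1)}$, one has
\[
\p{S_{n-k}=n-k}-\p{S_{n}=n-1}
=\frac{h}{2\pi}\int_{-\pi/h}^{\pi/h}\psi(t)^{\,n-k}\left(1-\psi(t)^{k}e^{it}\right)dt,
\]
so the cancellation happens pointwise in $t$: near $t=0$ one uses $|1-\psi(t)^{k}|\le k\,|1-\psi(t)|=\bigO{kt^{2}}$ (since $\psi(t)=1-\sigma^{2}t^{2}/2+\smallo{t^{2}}$), while the $1-e^{it}$ piece has odd leading term $-it$ which integrates against the nearly even, Gaussian-concentrated factor $\psi(t)^{n-k}$ to something $\smallo{1/n}$; dividing by $\p{S_{n}=n-1}=\Theta(n^{-1/2})$ then yields the $o(n^{-1/2})$ and $\bigO{k/n}$ terms, and part (ii) is a longer variant of the same computation. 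Your Gaussian main-term bookkeeping (the second-order Taylor cancellation producing $O(k^{2}/n^{2})$, and the $O(1/n)$ from the exponents) is correct as far as it goes, but without an argument of this single-integral type the stated error terms are out of reach; your proposal only proves the ratio is $1+\bigO{k/n}+o(1)$, which does not suffice for the paper's applications.
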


\subsection{Poisson Approximation}

Stein's method \citep{Stein1972} bounds the errors in approximating the
sum of random variables with a normal distribution. It was extended to Poisson distributions first
by \citet{Chen1975} and later by many others, e.g., \citet{Arratia1989} and
\citet{Barbour1992poisson}.  Among these, we found that it is easier to use the
\emph{exchangeable pairs} method by \citet{chatterjee2005} to prove Theorem \ref{intro:thm:fringe:count}.  In
particular, we use it of the form of \citet[thm.\ 4.37]{ross2011fundamentals} as follows.

Given a function \(f:\N_0\to\R\), let
\[
    \norm{f} := \max_{i \in \N_0} |f(i)|,
    \qquad
    \norm{\Delta f} := \max_{i \in \N_0} |f(i+1)-f(i)|
    .
\]
\begin{lemma}
    \label{pre:lem:exchangeable:pairs}
    Let \(W\) be a non-negative integer-valued random variable with \(\e W = \lambda\).
    Let \( (W, W') \) be an exchangeable pair, i.e., \((W,W') \eql (W',W)\).
    Let \(\cF\) be a \(\sigma\)-algebra such that \(W\) is \(\cF\)-measurable.
    If \(Z \eql \Po(\lambda)\), then there exists a function \(f:\N_0 \to \R\) with
    \[
        \norm{f} \le \lambda^{-1/2}, \qquad \norm{\Delta f} \le \lambda^{-1},
    \]
    such that for all \(c \in \R\),
    \begin{align*}
        \distTV{W,Z} \le
        \E{
            \left(
            \lambda-c \p{W'=W+1 |\cF}
            \right)
            f(W+1)
        }
        \\
        -
        \E{
            \left(
            W
            -
            c \p{W'=W-1 |\cF}
            \right)
            f(W)
        }
        .
    \end{align*}
\end{lemma}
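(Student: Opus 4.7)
The plan is to apply Stein's method for Poisson approximation in the exchangeable-pair formulation of \citet{chatterjee2005}. Three ingredients suffice: the Stein characterisation of $\Po(\lambda)$, classical bounds on the solution of the Stein equation, and a symmetry identity extracted from $(W,W')\eql(W',W)$.

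First I would recall that $Z\sim\Po(\lambda)$ is characterised by $\E[\lambda g(Z+1)-Zg(Z)]=0$ for every bounded $g:\N_{0}\to\R$. For each target set $A\subseteq\N_{0}$, let $f_{A}$ be the unique solution of $\lambda f_{A}(i+1)-i f_{A}(i)=\iverson{i\in A}-\p{Z\in A}$ with $f_{A}(0)=0$. The standard bounds (see, e.g., \citet{Barbour1992poisson}) give $\norm{f_{A}}\le \lambda^{-1/2}$ and $\norm{\Delta f_{A}}\le \lambda^{-1}$, uniformly in $A$. Evaluating at $W$ and taking expectations yields $\p{W\in A}-\p{Z\in A}=\E[\lambda f_{A}(W+1)-W f_{A}(W)]$.

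Next I would extract the key identity $\E[f_{A}(W+1)\p{W'=W+1\mid\cF}]=\E[f_{A}(W)\p{W'=W-1\mid\cF}]$. Applying $(W,W')\eql(W',W)$ to the statistic $h(x,y)=f_{A}(y)\iverson{y=x+1}$ gives $\E[f_{A}(W')\iverson{W'=W+1}]=\E[f_{A}(W)\iverson{W=W'+1}]$. On $\{W'=W+1\}$ we have $f_{A}(W')=f_{A}(W+1)$, and $\{W=W'+1\}=\{W'=W-1\}$, so the identity follows after pulling the $\cF$-measurable factors through the tower property. For any $c\in\R$, subtracting $c$ times this identity from the Stein representation yields
\begin{align*}
\p{W\in A}-\p{Z\in A}
&=\E[(\lambda-c\,\p{W'=W+1\mid\cF})f_{A}(W+1)]\\
&\quad-\E[(W-c\,\p{W'=W-1\mid\cF})f_{A}(W)].
\end{align*}
Choosing $A$ (and the sign of $f=\pm f_{A}$) to attain the supremum in $\distTV{W,Z}=\sup_{A}|\p{W\in A}-\p{Z\in A}|$ gives the claimed inequality with the stated $\norm{f}$ and $\norm{\Delta f}$ control.

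The main obstacle, and the only genuinely nontrivial step, is the uniform Stein-equation bounds $\norm{f_{A}}\le \lambda^{-1/2}$ and $\norm{\Delta f_{A}}\le \lambda^{-1}$. These come from a careful analysis of the defining recursion, most transparently via the Markov-semigroup representation of $f_{A}$ as an integral along the $M/M/\infty$ immigration-death chain until it hits $A$; they are classical and go back to Barbour and Eagleson. The exchangeability identity itself is a one-liner once $h$ is chosen symmetrically, and the final assembly is routine algebra.
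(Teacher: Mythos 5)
Your proof is correct. Note that the paper itself gives no proof of this lemma --- it is quoted as being of the form of Ross (2011), Theorem 4.37 --- and your argument (the classical uniform bounds \(\norm{f_A}\le \lambda^{-1/2}\), \(\norm{\Delta f_A}\le \lambda^{-1}\) for the Stein--Chen equation, the exchangeability identity \(\E{f_A(W+1)\p{W'=W+1|\cF}}=\E{f_A(W)\p{W'=W-1|\cF}}\) obtained from the symmetric statistic and the tower property, and then choosing the extremal set and sign) is exactly the standard derivation of that cited result; in particular your \(f\) depends only on the extremal set, not on \(c\), so the quantifier order ``there exists \(f\) such that for all \(c\)'' is respected.
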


The following Lemma is a special case of \citet[thm.\ 1]{roos2003467}, which applies to mixed
Poisson distributions.  \citeauthor{Barbour1992poisson} proved a similar result using Stein's method \cite[thm.\
1.C]{Barbour1992poisson}.  We include our proof for its simplicity.
\begin{lemma}
    \label{pre:lem:poisson:tv:distance}
    If \(X \eql \Po(\mu)\) and \(Y \eql \Po(\nu)\), then
    \[
        \distTV{X, Y} \le \left|\sqrt\mu-\sqrt{\nu}\right|
        = \frac{|\mu - \nu|}{\sqrt{\mu} + \sqrt{\nu}}.
    \]
\end{lemma}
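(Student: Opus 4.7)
The second equality in the statement is the trivial algebraic identity $(\sqrt\mu-\sqrt\nu)(\sqrt\mu+\sqrt\nu)=\mu-\nu$, so everything reduces to proving $\distTV{X,Y}\le|\sqrt\mu-\sqrt\nu|$. The plan is to go through the Hellinger distance, which has an exceptionally clean closed form for Poisson laws, and then invoke the standard inequality relating total variation to Hellinger distance.

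First, I would recall that for probability mass functions $p$ and $q$ on $\N_0$, the Hellinger distance is defined by $H^2(p,q):=\tfrac12\sum_{k}(\sqrt{p_k}-\sqrt{q_k})^2=1-\rho(p,q)$, where $\rho(p,q):=\sum_k\sqrt{p_kq_k}$ is the Hellinger affinity. The key computation is that for $p_k=e^{-\mu}\mu^k/k!$ and $q_k=e^{-\nu}\nu^k/k!$ one has
\[
\rho(p,q)=\sum_{k=0}^\infty\sqrt{\tfrac{e^{-\mu}\mu^k}{k!}\cdot\tfrac{e^{-\nu}\nu^k}{k!}}=e^{-(\mu+\nu)/2}\sum_{k=0}^\infty\frac{(\sqrt{\mu\nu})^k}{k!}=e^{-(\sqrt\mu-\sqrt\nu)^2/2},
\]
so that $H^2(X,Y)=1-e^{-(\sqrt\mu-\sqrt\nu)^2/2}$. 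Applying the elementary bound $1-e^{-x}\le x$ (for $x\ge0$) with $x=(\sqrt\mu-\sqrt\nu)^2/2$ yields $H^2(X,Y)\le\tfrac12(\sqrt\mu-\sqrt\nu)^2$.

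Next I would relate this to the total variation distance. Writing $d_{TV}(X,Y)=\tfrac12\sum_k|p_k-q_k|=\tfrac12\sum_k|\sqrt{p_k}-\sqrt{q_k}|(\sqrt{p_k}+\sqrt{q_k})$ and applying Cauchy--Schwarz,
\[
\distTV{X,Y}^2\le\tfrac14\Bigl(\sum_k(\sqrt{p_k}-\sqrt{q_k})^2\Bigr)\Bigl(\sum_k(\sqrt{p_k}+\sqrt{q_k})^2\Bigr)=\tfrac12(\sqrt\mu-\sqrt\nu)^2\cdot\bigl(1+\rho(p,q)\bigr),
\]
and since $\rho(p,q)\le 1$ the right-hand side is at most $(\sqrt\mu-\sqrt\nu)^2$. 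Taking square roots gives $\distTV{X,Y}\le|\sqrt\mu-\sqrt\nu|$, which combined with the algebraic identity completes the proof.

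There is no real obstacle here: the whole argument hinges on the fact that the Hellinger affinity of two Poisson laws linearises the parameters via their square roots (because $\sum_k(\sqrt{\mu\nu})^k/k!=e^{\sqrt{\mu\nu}}$ and $-(\mu+\nu)/2+\sqrt{\mu\nu}=-(\sqrt\mu-\sqrt\nu)^2/2$). A coupling proof — writing, when $\mu\le\nu$, $Y\eql X+Z$ with $Z\eql\Po(\nu-\mu)$ independent of $X$, and bounding $\distTV{X,Y}\le\p{Z>0}=1-e^{-(\nu-\mu)}\le\nu-\mu$ — is simpler but only gives $|\mu-\nu|$, which is strictly weaker than $|\mu-\nu|/(\sqrt\mu+\sqrt\nu)$ when the parameters are large; hence the detour through Hellinger is necessary to obtain the sharp form stated.
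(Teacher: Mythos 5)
Your proof is correct and is essentially the paper's own argument: the same Cauchy--Schwarz step bounding the total variation distance via the Hellinger affinity, the same computation $\sum_k\sqrt{p_kq_k}=e^{-(\sqrt\mu-\sqrt\nu)^2/2}$ for Poisson laws, and the same elementary bound $1-e^{-x}\le x$ (you apply it to $1-\rho$ and then use $1+\rho\le 2$, whereas the paper applies it once to $1-\rho^2$; the difference is cosmetic). The only blemish is that the displayed ``$=$'' before $\tfrac12(\sqrt\mu-\sqrt\nu)^2\,(1+\rho(p,q))$ should be ``$\le$'', since that step already invokes your earlier bound $H^2\le\tfrac12(\sqrt\mu-\sqrt\nu)^2$ rather than an identity.
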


\begin{proof}
    Let \(x_i = \p{X=i}\) and \(y_i = \p{Y=i}\). We have
    \begin{align*}
        \distTV{X,Y}
        &
        =
        \frac{1}{2}
        \sum_{i=1}^{\infty} |x_i - y_i|
        =
        \frac{1}{2}
        \sum_{i=1}^{\infty} |\sqrt{x_i} - \sqrt{y_i}|(\sqrt{x_i}+\sqrt{y_i}) \\
        & \le
        \frac{1}{2}
        \left(
        \sum_{i=1}^{\infty} |\sqrt{x_i} - \sqrt{y_i}|^2
        \sum_{j=1}^{\infty} (\sqrt{x_j}+\sqrt{y_j})^2
        \right)^{1/2}
        \\
        & =
        \frac{1}{2}
        \left(
        \left(
        2-
        \sum_{i=1}^{\infty} 2 \sqrt{x_iy_i}
        \right)
        \left(
        2+
        \sum_{j=1}^{\infty} 2 \sqrt{x_jy_j}
        \right)
        \right)^{1/2}
        \\
        &=
        \left( 1-\left( \sum_{i=1}^{\infty} \sqrt{x_i y_i} \right)^2 \right)^{1/2},
    \end{align*}
    where the second step uses the Cauchy-Schwartz inequality.
    An easy calculation shows that
    \[
        \sum_{i=1}^{\infty} \sqrt{x_i y_i}
        =
        \sum_{i=1}^{\infty} e^{-\frac{\mu+\nu}{2}} \frac{(\mu\nu)^{i/2}}{i!}
        =
        \exp \left( \sqrt{\mu\nu}-\frac{\mu+\nu}{2} \right)
        =
        \exp \left( -\frac{(\sqrt{\mu}-\sqrt{\nu})^2}{2} \right).
    \]
    Thus we have
    \begin{align*}
        \distTV{X,Y}
        &
        \le
        \sqrt{1-\exp\left( -(\sqrt{\mu}-\sqrt{\nu})^2 \right)}
        \\
        &
        \le \sqrt{(\sqrt{\mu}-\sqrt{\nu})^2}
        \qquad (\text{by \(1-e^{-x} \le x\)})
        \\
        &
        =
        \left|\sqrt{\mu}-\sqrt{\nu} \right|
        .
        \qedhere
    \end{align*}
\end{proof}

\section{Sequences of fringe subtrees}

\label{sec:fringe}

In this section we prove Theorem \ref{intro:thm:fringe:count} and sketch the proof of
Theorem \ref{intro:thm:fringe:set:count}.
Recall that \(\pi(T) := \p{\gw = T}\), and \(\treecount_{T}(n) := \iverson{\sum_{v \in \gwn} \gw_{n,v} = T}\).
Theorem \ref{intro:thm:fringe:count} states that
\[
    \sup_{T \in \tsetkn} \distTV{\fringeT,\Po(n \pi(T))} = o(1),
\]
whenever \(k_n = o(n)\).
If \(\pi(T) = 0\), then \(\fringeT = 0\) and \( \distTV{\fringeT, \Po(n \pi(T))}=0\)
deterministically.
Thus we can assume that \(\p{\gw \in \tsetkn} = \p{|\gw| = k_n} > 0\) for all \(n\),
and that the above supremum is taken over all \(T \in \tsetkn\) with \(\pi(T) > 0\).

To prove this, we first show that \(\e \fringeT \sim n \pi(T)\).
Then we construct a coupling \(\bfringeT \sim \fringeT\)
such that \( (\fringeT, \bfringeT)\) forms an exchangeable pair,
and apply Lemma \ref{pre:lem:exchangeable:pairs} to finish the proof.

Recall that \(\fringeSkn := \iverson{\sum_{v \in \gwn} T_v \in \tsetkn}\), i.e.,
\(\fringeSkn\) is the number of fringe subtrees of size \(k_n\) in \(\gwn\).
Also recall that \(\pi(\cS) := \p{\gw \in \cS}\).

\begin{lemma}
    \label{fringe:lem:tree:count:expectation}
    Let \(k = k_n = o(n)\).
    We have
    \begin{align*}
        \sup_{T \in \tsetk}
        \left|
        \frac{\e \fringeT}{n\pi(T)}
        -1
        \right|
        =
        O\left( \frac{k}{n} \right) +o \left( n^{-1/2} \right)
        ,
    \end{align*}
    and
    \begin{align*}
        \left|
        \frac{\e \fringeSk}{n \pi(\tsetk)}
        -1
        \right|
        =
        O\left( \frac{k}{n} \right) +o \left( n^{-1/2} \right)
        .
    \end{align*}
\end{lemma}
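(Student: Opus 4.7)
The plan is to read off both estimates directly from the two preceding lemmas, with essentially no new probabilistic content beyond noticing that the error term in the local-limit ratio is independent of the tree $T$.

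By Lemma \ref{pre:lem:tree:count:expectation}(i), for any $T \in \tsetk$ with $\pi(T)>0$, I have
\[
    \frac{\e \fringeT}{n\pi(T)} = \frac{\p{S_{n-k} = n-k}}{\p{S_n = n-1}},
\]
and by Lemma \ref{pre:lem:tree:count:expectation}(ii),
\[
    \frac{\e \fringeSk}{n\pi(\tsetk)} = \frac{\p{S_{n-k} = n-k}}{\p{S_n = n-1}}
\]
(the same ratio, since neither side depends on the shape of $T$, only on $k$). The key observation is that the right-hand side is a single deterministic quantity that depends only on $k$ and $n$, not on $T$; so the supremum over $T \in \tsetk$ in the first claim is trivial.

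Since $k = k_n = o(n)$, we have $k \le n/2$ for all sufficiently large $n$, so Lemma \ref{pre:lem:prob:sum:of:iid}(i) applies and gives, uniformly in the relevant regime,
\[
    \frac{\p{S_{n-k} = n-k}}{\p{S_n = n-1}} = 1 + O\!\left(\frac{k}{n}\right) + o\!\left(n^{-1/2}\right).
\]
Subtracting $1$ and taking absolute values yields both displayed bounds simultaneously.

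There is no real obstacle here: the lemma is purely a bookkeeping consequence of the exact formula in Lemma \ref{pre:lem:tree:count:expectation} combined with the local central limit estimate of Lemma \ref{pre:lem:prob:sum:of:iid}(i). The only small point to mention is the restriction to $T$ with $\pi(T)>0$ (otherwise $\e\fringeT = 0 = n\pi(T)$ and the ratio is undefined but the statement is vacuously accommodated), and the fact that for $n$ large enough the hypothesis $k\le n/2$ needed to invoke Lemma \ref{pre:lem:prob:sum:of:iid}(i) is automatic from $k=o(n)$.
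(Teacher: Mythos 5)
Your proposal is correct and follows essentially the same route as the paper: apply the exact ratio from Lemma \ref{pre:lem:tree:count:expectation} and the uniform estimate of Lemma \ref{pre:lem:prob:sum:of:iid}(i), noting that the error is independent of the shape of \(T\) so the supremum is immediate. The only cosmetic difference is that you invoke part (ii) of Lemma \ref{pre:lem:tree:count:expectation} directly for the second bound, whereas the paper sums the first estimate over all \(T \in \tsetk\); these are the same computation.
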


\begin{proof}
    Since \(k = o(n)\), we have \(k < n/2\) for \(n\) large.
    Thus by Lemma \ref{pre:lem:tree:count:expectation} and \ref{pre:lem:prob:sum:of:iid},
    uniformly for all \(T \in \tsetk\),
    \begin{align*}
        \left|
        \frac{\e \fringeT}{n}
        -
        \pi(T)
        \right|
        &
        =
        \pi(T)
        \left|
        \frac{\p{S_{n-k} = n- k}}{\p{S_n = n-1}}
        -
        1
        \right|
        =
        \pi(T)
        \left(
        \bigO{ \frac{k}{n}} + o(n^{-1/2})
        \right)
        .
    \end{align*}
    Summing over all trees \(T\) with \(|T|=k\) gives the second part of the lemma.
\end{proof}

\begin{lemma}
    \label{fringe:lem:tree:count:var}
    Let \(k = k_n = o(n)\).
    There exists a constant \(C_1\) such that
    \[
    \V {\fringeSk}
    \le C_1 (n \pi(\tsetk))^2
        \left(
        \frac{1}{n}
        +
        \frac{k}{n^{3/2}}
        +
        \frac{k^{2}}{n^2} \right)
        .
    \]
\end{lemma}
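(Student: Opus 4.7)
The plan is to expand $\E{\fringeSk^2}$ directly via the preorder degree sequence representation, subtract the square of the mean that Lemma \ref{pre:lem:tree:count:expectation}(ii) supplies, and invoke Lemma \ref{pre:lem:prob:sum:of:iid}(ii) to control the resulting cancellation. First I would write $\fringeSk = \sum_{i=1}^n I_i$, where $I_i := \iverson{(\xin_i, \xin_{i+1}, \ldots, \xin_{i+k-1}) \in \cD_k}$ with indices taken cyclically modulo $n$. Exactly as in the proof of Lemma \ref{pre:lem:tree:count:expectation}, Corollary \ref{cor:pre:tree:deg:seq} rules out any wrap-around block from being a valid preorder degree sequence, so $I_i = \iverson{|\gw_{n,v_i}|=k}$ almost surely and
\[
\E{\fringeSk^2} = \E\fringeSk + \sum_{i \neq j} \p{I_i = I_j = 1}.
\]

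The crucial off-diagonal step rests on the observation that if $I_i = I_j = 1$ with $i \neq j$, then the two fringe subtrees have equal size $k$, so neither can strictly contain the other and they must be vertex-disjoint; in preorder this forces the cyclic distance $(j-i)\bmod n$ to lie in $\{k, k+1, \ldots, n-k\}$. By cyclic stationarity (Lemma \ref{pre:lem:tree:deg:seq:rotation}), $\p{I_i = I_j = 1}$ depends only on $(j-i)\bmod n$. For any valid offset $d$, I would pass to the i.i.d.\ representation conditioned on $S_n = n-1$ and sum over pairs of degree sequences $(d_1,\ldots,d_k), (d_1',\ldots,d_k') \in \cD_k$ occupying the two length-$k$ blocks; each block fixes its internal sum to $k-1$, so the remaining $n-2k$ coordinates are constrained to sum to $n-2k+1$, yielding
\[
\p{I_1 = I_{1+d} = 1} = \pi(\cS_k)^2 \cdot \frac{\p{S_{n-2k} = n-2k+1}}{\p{S_n = n-1}}.
\]
Multiplying by the $n(n-2k+1)$ valid ordered pairs and combining with the mean formula from Lemma \ref{pre:lem:tree:count:expectation}(ii), a brief rearrangement gives
\[
\V \fringeSk = \E\fringeSk + n^2 \pi(\cS_k)^2\, \Delta \;-\; (2k-1)\, n\, \pi(\cS_k)^2 \cdot \frac{\p{S_{n-2k}=n-2k+1}}{\p{S_n=n-1}},
\]
where $\Delta := \p{S_{n-2k}=n-2k+1}/\p{S_n=n-1} - \bigl(\p{S_{n-k}=n-k}/\p{S_n=n-1}\bigr)^{2}$.

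To conclude, I would drop the (non-positive) last term, apply Lemma \ref{pre:lem:prob:sum:of:iid}(ii) to bound $|\Delta| = O(1/n + k/n^{3/2} + k^2/n^2)$ (which turns the middle term into the target $O\bigl((n\pi(\cS_k))^2 (1/n + k/n^{3/2} + k^2/n^2)\bigr)$), and use Lemma \ref{pre:lem:prob:sum:of:iid}(i) to see that $\E\fringeSk \sim n\pi(\cS_k)$ is of the same order and can be absorbed into the right-hand side. The main obstacle is really the bookkeeping in the off-diagonal step: checking via Corollary \ref{cor:pre:tree:deg:seq} that exactly the $n(n-2k+1)$ cyclic offsets contribute (no boundary artefact from the identification of degree sequences with trees), and verifying that the two disjoint degree blocks factor cleanly against the residual sum constraint once one conditions on $S_n = n-1$. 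Once this is in place, the stated inequality follows at once from Lemma \ref{pre:lem:prob:sum:of:iid}(ii).
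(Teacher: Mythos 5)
Your bookkeeping follows the paper's proof almost verbatim: the same cyclic block indicators, the same observation that two distinct blocks of length \(k\) must be disjoint so that only ordered pairs at cyclic distance in \(\{k,\dots,n-k\}\) contribute, the same factorization \(\pi(\tsetk)^2\,\p{S_{n-2k}=n-2k+1}/\p{S_n=n-1}\) for a disjoint pair, and the same appeal to Lemma \ref{pre:lem:prob:sum:of:iid}; your exact identity for \(\V{\fringeSk}\) is correct. The gap is the very last step: the diagonal term \(\E{\fringeSk}\) cannot be ``absorbed into the right-hand side.'' By Lemmas \ref{fringe:lem:tree:count:expectation} and \ref{pre:lem:prob:T:eql:n}, \(\E{\fringeSk}\sim n\pi(\tsetk)=\Theta(nk^{-3/2})\) along the admissible subsequence, while the largest term of the claimed bound is \((n\pi(\tsetk))^2/n=n\pi(\tsetk)^2=\Theta(nk^{-3})\), smaller by a factor of order \(k^{3/2}\); the remaining terms \(k\sqrt{n}\,\pi(\tsetk)^2\) and \(k^2\pi(\tsetk)^2\) are smaller still because \(k=o(n)\). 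So when \(k\to\infty\) the claim that \(\E{\fringeSk}\) ``is of the same order'' is false, and your own identity in fact yields \(\V{\fringeSk}=(1+o(1))\,n\pi(\tsetk)\), which exceeds the bound you are trying to prove.

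This is not a defect you introduced: the paper's own proof commits the same omission, since its inequality \(\sum_{i,j}\bigl(\E{\Jtilde_i\Jtilde_j}-\E{\Jtilde_i}\E{\Jtilde_j}\bigr)\le n(n-2k)\bigl(\E{\Jtilde_1\Jtilde_{k+1}}-\E{\Jtilde_1}\E{\Jtilde_{k+1}}\bigr)\) silently discards the positive diagonal terms \(\sum_i\bigl(\E{\Jtilde_i}-\E{\Jtilde_i}^2\bigr)\approx n\pi(\tsetk)\), which dominate; this is consistent with the Poisson picture, under which the variance should be close to the mean \(n\pi(\tsetk)\) rather than to \(n\pi(\tsetk)^2\). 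What your computation actually proves is the corrected bound \(\V{\fringeSk}\le \E{\fringeSk}+C_1(n\pi(\tsetk))^2\bigl(1/n+k/n^{3/2}+k^2/n^2\bigr)\), and that weaker bound is all that is needed downstream: in the proof of Lemma \ref{fringe:lem:tree:count:distance} it gives \(\alpha_2^2\le r_n/q_n+O(r_n)+O(r_nk/n^{1/2})+O(r_nk^2/n)\to 0\), so Theorem \ref{intro:thm:fringe:count} is unaffected. The honest conclusion of your argument is therefore the corrected inequality, not the lemma as stated; you should flag this rather than assert the absorption.
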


\begin{proof}
    Recall that \( \xinvec := (\xinlist)\) is the preorder degree sequence of \(\gwn\)
    and that \(\cD_{k}\) is the set all preorder degree sequences of length \(k\).
    Let
    \[
    J_i :=
            \iverson{(\xin_{i}, \xin_{i+1}, \ldots \xin_{i+k-1}) \in \cD_k}
    ,
    \]
    where the indices are modulo \(n\).
    Thus \(J_i = 1\) if and only if \((\xin_{i}, \ldots \xin_{i+k-1})\)
    satisfies \eqref{pre:eq:deg:seq}.
    Note that if \(n-k+1 < i \le k\), then it is impossible that \(J_i = 1\).
    (See Lemma \ref{pre:lem:tree:count:expectation} for a similar argument.)
            Thus
    \(\fringeSk := \sum_{v \in \gwn} \iverson{\gw_{n,v} \in \tsetk} = \sum_{i=1}^{n} J_i\).

    Recall that \( \xitildenvec:=(\xitildenlist) \) is a uniform random rotation of \( \xinvec\).
    Let
    \[
        \Jtilde_i :=
        \iverson{(\xitilden_{i}, \xitilden_{i+1}, \ldots \xitilden_{i+k-1}) \in \cD_k}.
    \]
    Then \(\fringeSk = \sum_{i=1}^{n} J_i = \sum_{i=1}^{n} \Jtilde_i\).

    Using the fact that \(\Jtilde_1, \ldots, \Jtilde_n\) are identically distributed
    and Lemma \ref{pre:lem:tree:count:expectation}, we have
    \[
        \e \Jtilde_1 = \frac{1}{n} \e \fringeSk
        =
        \p{|\gw| = k} \frac{\p{S_{n-k} = n-k}}{\p{S_n = n-1}}.
    \]
    Similar to the proof of Lemma \ref{pre:lem:tree:count:expectation}, we have
    \begin{align*}
        \e \Jtilde_1 \Jtilde_{k+1}
        & =
        \p{
            (\xitilden_{1}, \ldots, \xitilden_{k}) \in \cD_{k},
            (\xitilden_{k+1}, \ldots, \xitilden_{2k}) \in \cD_{k}
        }
        \\
        & =
        \p{
            (\xi_{1}, \ldots, \xi_{k}) \in \cD_{k},
            (\xi_{k+1}, \ldots, \xi_{2k}) \in \cD_{k}
            ~|~S_{n} = n-1
        }
        \\
        & =
        \p{
            |\gw| = k
        }^{2}
        \frac{
            \p{S_{n-2k} = n-2k+1}
        }
        {
            \p{S_{n} = n-1},
        }
    \end{align*}
    where \(\xi_1, \xi_2, \ldots\) are i.i.d.\ copies of \(\xi\) and \(S_{m} := \sum_{i=1}^{m} \xi_i\).

    Consider two indices \(i \ne j\).
    Note that if \(|i-j| < k\) or \(|i+n-j| < k\), then
    \(\e \Jtilde_i \Jtilde_j = 0\). This is because a fringe subtree of size \(k\), besides itself, cannot contain another
    fringe subtree of the same size.
    On the other hand, if \(|i-j| > k\) and \(|i+n-j| > k\),
    i.e.,
    \( (\xitilden_i, \ldots, \xitilden_{i+k-1}) \) and
    \( (\xitilden_j, \ldots, \xitilden_{j+k-1}) \) do not overlap,
    then \(\E{\Jtilde_i \Jtilde_j} = \E{\Jtilde_1
        \Jtilde_{k+1}}\) since \( \xitildenvec\) is permutation invariant.
        Therefore, there exists a constant \(C_1\) such that
    \begin{align*}
        \V{\fringeSk}
        &
        =
        \sum_{1 \le i, j \le n}
        \left(
        \E{\Jtilde_i \Jtilde_j}
        -
        \E{\Jtilde_i}\E{\Jtilde_j}
        \right)
        \le
        n(n-2k)
        \left(
            \e \Jtilde_1 \Jtilde_{k + 1}
            - \e{\Jtilde_1}\e{\Jtilde_{k+1}}
        \right)
        \\
        &
        \le
        \left(n \p{|\gw| = k} \right)^{2}
        \left[
            \frac{\p{S_{n-2k} = n-2k+1}}{\p{S_n = n-1}}
            -
            \left(
            \frac{\p{S_{n-k} = n-k}}{\p{S_n = n-1}}
            \right)^2
        \right]
        \\
        &
        =
        C_1
        \left(n  \pi(\tsetk)  \right)^{2}
        \left(
        \frac{1}{n}
        + \frac{k}{n^{3/2}}
        + \frac{k^{2}}{n^2}
        \right)
        ,
    \end{align*}
    where in the last step we use
    \(k \le n/4\) for \(n\) large
    and part (ii) of Lemma \ref{pre:lem:prob:sum:of:iid}.
\end{proof}

The following Lemma is the key part of the proof of Theorem \ref{intro:thm:fringe:count}.
\begin{lemma}
    \label{fringe:lem:tree:count:distance}
                Assume that \(k=k_n=o(n)\) and \(k \to \infty\).
    We have as \(n \to \infty\),
    \[\sup_{T \in \tsetk} \distTV{\fringeT, \Po(\E{\fringeT})} \to 0.\]
\end{lemma}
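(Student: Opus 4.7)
The plan is to apply the exchangeable-pairs version of Stein's method (Lemma~\ref{pre:lem:exchangeable:pairs}) to $W := \fringeT$ with mean $\lambda := \e W$, taking the exchangeable partner $W'$ from a \emph{subtree-switching} construction. Given $\gwn$, pick a uniform random node $V$ of $\gwn$ and set $K := |\gw_{n,V}|$; if $K = k$, let $T^{\star}$ be an independent copy of $\gw$ conditioned on $|\gw|=k$ and form $\overgwn$ by replacing $\gw_{n,V}$ with $T^{\star}$, while if $K \neq k$, set $\overgwn := \gwn$. Because, conditionally on the ``outside'' tree and on $K$, both $\gw_{n,V}$ and its replacement are i.i.d.\ copies of $\gw$ restricted to size $K$, the pair $(\gwn,\overgwn)$ is exchangeable, and hence so is $(W,W') := (\fringeT,\bfringeT)$.

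The key deterministic observation is that $|W'-W|\le 1$. A fringe subtree strictly containing $\gw_{n,V}$ has size exceeding $k$, hence cannot have shape $T$; a fringe subtree strictly inside $\gw_{n,V}$ (or inside $T^{\star}$) has size less than $k$ by Corollary~\ref{cor:pre:tree:deg:seq}, hence cannot either. So only the position $V$ itself can change shape under the swap, giving
\[
    W' - W \;=\; \iverson{K=k}\,\bigl(\iverson{T^{\star}=T}-\iverson{\gw_{n,V}=T}\bigr) \;\in\; \{-1,0,1\}.
\]
Writing $\alpha := \pi(T)/\pi(\tsetk)$ and $\cF := \sigma(\gwn)$, independence of $T^{\star}$ yields
\[
    \p{W'=W+1\mid\cF} \;=\; \tfrac{\fringeSk - W}{n}\,\alpha, \qquad
    \p{W'=W-1\mid\cF} \;=\; \tfrac{W}{n}(1-\alpha).
\]

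I would then invoke Lemma~\ref{pre:lem:exchangeable:pairs} with the choice $c := n/(1-\alpha)$, which makes $c\,\p{W'=W-1\mid\cF} = W$ so the ``$W$-term'' in the Stein bound vanishes identically. It remains to estimate
\[
    \distTV{W,\Po(\lambda)} \;\le\; \norm{f}\,\e\Bigl|\lambda - \tfrac{\alpha}{1-\alpha}(\fringeSk - W)\Bigr|
    \;\le\; \norm{f}\,\bigl(|\text{mean}|+\sqrt{\text{variance}}\bigr),
\]
with $\norm{f}\le \min\{1,\lambda^{-1/2}\}$. By Lemma~\ref{fringe:lem:tree:count:expectation}, $\alpha\,\e\fringeSk = \lambda$ up to a relative error of $O(k/n)+o(n^{-1/2})$, which gives $|\text{mean}| = O\bigl(\lambda(k/n+n^{-1/2})/(1-\alpha)\bigr)$. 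The variance equals $(\alpha/(1-\alpha))^{2}\V(\fringeSk-W)$, and re-running the proof of Lemma~\ref{fringe:lem:tree:count:var} with $\cS = \tsetk\setminus\{T\}$ in place of $\tsetk$ yields $\sqrt{\text{variance}} = O\bigl(\lambda\sqrt{1/n + k/n^{3/2}+k^{2}/n^{2}}/(1-\alpha)\bigr)$.

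The main obstacle is uniformity in $T$: the factor $1/(1-\alpha)$ must be controlled independently of $T\in\tsetk$. Since $\pi(T) = \prod_{v\in T} p_{d(v)} \le \pmax^{k}$ with $\pmax<1$ (as $\V\xi>0$ rules out deterministic $\xi$), and $\pi(\tsetk) \asymp k^{-3/2}$ by Lemma~\ref{pre:lem:prob:T:eql:n}, one obtains $\alpha = O(\pmax^{k} k^{3/2}) = o(1)$ uniformly in $T$, so $1/(1-\alpha) = 1 + o(1)$ uniformly. Combining this with $\lambda \le n\pi(\tsetk) = O(n/k^{3/2})$ and $\norm{f}\le\min\{1,\lambda^{-1/2}\}$, each contribution reduces to one of $k^{1/4}n^{-1/2}$, $k^{-3/4}$, or $n^{-1/4}k^{-1/4}$, all of which vanish as $n\to\infty$ with $k\to\infty$ and $k=o(n)$, yielding the asserted uniform convergence.
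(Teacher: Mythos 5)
Your proposal is correct and follows essentially the same route as the paper: an exchangeable pair built by subtree switching, the Stein/Chatterjee bound of Lemma~\ref{pre:lem:exchangeable:pairs}, the expectation asymptotics of Lemma~\ref{fringe:lem:tree:count:expectation}, the variance bound of Lemma~\ref{fringe:lem:tree:count:var}, and uniformity in \(T\) via \(\pi(T)\le \pmax^{k}\) against \(\pi(\tsetk)=\Theta(k^{-3/2})\). The only differences are cosmetic: you draw the replacement subtree directly from \(\gw\) conditioned to have size \(k\) (rather than from the same position of an independent copy of \(\gwn\), accepted only when that block also has size \(k\)) and pick \(c=n/(1-\alpha)\) so that the \(W\)-term vanishes, whereas the paper picks \(c\) as \(n\) divided by the per-position probability of a size-\(k\) block; both reduce to the same mean and variance estimates.
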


\begin{proof}
    Let \(T \in \tsetk\).
    We use the abbreviations
    \(\Gamma_n = \fringeT\) and \(\Psi_n = \fringeSk\).
    Let \( (d_1, \ldots, d_k) \) be the preorder degree sequence of \(T\).
    Recall that \( \xitildenvec := (\xitildenlist)\) is a uniform random cyclic rotation of
    the preorder degree sequence of \(\gwn\).
    Let
    \[
        \Itilde_{i} = \iverson{\xitilden_{i} = d_1, \xitilden_{i+1} = d_2, \ldots \xitilden_{i+k-1} = d_k},
        \quad
        \text{ and }
        \quad
        \Jtilde_{i} = \iverson{(\xitilden_i, \xitilden_{i+1}, \ldots, \xitilden_{i+k-1}) \in \cD_k},
    \]
    where all the indices are modulo \(n\).
    Then by the same argument in Lemma \ref{fringe:lem:tree:count:var},
    \(\Gamma_n = \sum_{i=1}^{n} \Itilde_{i}\)
    and \(\Psi_n = \sum_{i=1}^{n} \Jtilde_{i}\).
    Note that \(\Itilde_{i} = 1\) implies that
    \(\Jtilde_{i} = 1\) since \( (d_{1}, \ldots, d_{k}) \in \cD_{k}\).
    Let \( \xihatnvec := (\xihatnlist)\) be an independent copy \( \xitildenvec \).
    Let \(\Ihat_i\) and \(\Jhat_i\) be defined for \( \xihatnvec \)
    as \(\Itilde_{i}\) and \(\Jtilde_{i}\) for \(\xitilden\).

    We construct a new sequence \( \xiovernvec := (\xiovernlist)\) as follows.
    Let \(Z\) be a uniform random variable on \(\{1,2,\ldots,n\}\) that is independent from everything else.
    Let \( \xiovernvec = \xitildenvec \) if \(\Jtilde_{Z}=0\) or \(\Jhat_{Z} = 0\).
    Otherwise \(\Jtilde_{Z}=\Jhat_{Z} = 1\) and in this case we let
    \[
        \xiovern_{i} =
        \begin{cases}
            \xihatn_i & \qquad \mbox{if } i \in \{Z, Z+1, \ldots, Z+k-1\} \pmod{n}, \\
            \xitilden_i &  \qquad \mbox{otherwise}.
        \end{cases}
    \]

    Equivalently, we can construct \( \xiovernvec\) using a method which we call \emph{subtree
    switching}.
    Let \(\tildegwn\) be an independent copy of \(\gwn\).
    Let \(Z\) be as before.
    Let \(v({Z})\) and \(\widetilde{v}({Z})\) be the \(Z\)-th nodes (in {\scshape dfs} order) in \(\gwn\) and
    \(\tildegwn\) respectively.
    If the two fringe subtrees \(\gw_{n, v(Z)}\) and \(\tildegw_{n, \widetilde{v}({Z})}\) are both of size
    \(k\), then in \(\gwn\) we replace \(\gw_{n, v(Z)}\) with \(\gw_{n, \widetilde{v}({Z})}\) and get a new tree
    \(\overgwn\). Otherwise let \(\overgwn = \gwn\).
    Let \( \xiovernvec\) be a uniform random rotation of the preorder degree sequence of \(\overgwn\).
    This construction of \( \xiovernvec\) is equivalent to the above one.

    Let \(\Iover_{i}\) be defined for \( \xiovernvec\) as \(\Itilde_i\) for \( \xitildenvec\).
    Let \(\Gammaover_n := \sum_{i=1}^{n} \Iover_i = \bfringeTn\).
    It is not hard to verify that \( \xitildenvec \eql \xiovernvec\) and that
    \( (\Gamma_n, \Gammaover_n) \eql (\Gammaover_n, \Gamma_n)\),
    i.e., \( (\Gamma_n, \Gammaover_n) \) is an exchangeable pair defined in Lemma
    \ref{pre:lem:exchangeable:pairs}. We leave the details to the reader.

    Let \(r_n := \e{\Itilde_1} = \e{\Ihat_1}\)
    and \(q_n := \e{\Jtilde_1} = \e{\Jhat_1}\).
    Let \(\cF = \sigma(\xitildenlist)\), i.e., the sigma algebra generated by \( \xitildenvec\).
    Since the event \(\Gammaover_n = \Gamma_n + 1\) happens if and only if \(\Jtilde_{Z} =
    \Jhat_{Z} = 1\), \(\Itilde_{Z} = 0\) and \(\Ihat_{Z} = 1\), we have
    \begin{align*}
        \p{\Gammaover_n = \Gamma_n + 1|\cF}
        &
        =
        \p{ \cup_{i=1}^{n}
        \left.
        \left[
            Z=i, \Jtilde_{i} = 1, \Itilde_{i}=0, \Jhat_{i} = 1, \Ihat_{i} = 1
        \right]\,
        \right| \cF}
        \\
        &
        =
        \sum_{i=1}^{n}
        \p{Z = i}
        \p{\left. \Jtilde_{i}=1, \Itilde_{i}=0 \, \right|\cF}
        \p{\Ihat_{i}=1}
        \\
        &
        =
        \frac{1}{n}
        \sum_{i=1}^{n}
        \Jtilde_{i}
        \left( 1-\Itilde_i \right)
        r_{n}
        =
        \frac{r_n}{n}
        \sum_{i=1}^{n}
        \left(\Jtilde_{i} -\Itilde_i \right),
    \end{align*}
    where we use that
    \([\Itilde_i = 1] \subseteq [\Jtilde_i = 1] \)
    and
    \([\Ihat_i = 1] \subseteq [\Jhat_i = 1] \).

    Let \(c =  n/\e \Jtilde_1=n/q_n \).
    Note that \(\sum_{i=1}^{n} \e \Itilde_i = \e \Gamma_n = n r_n\),
    and that \(\sum_{i=1}^n \e \Jtilde_i = \e \Psi_n = n q_n\).
    Thus
    \begin{align*}
        \zeta_1
        & :=
        \e \Gamma_n - c \p{\Gammaover_n = \Gamma_n + 1|\cF}
        =
        \e \Gamma_n - \frac{n}{q_n} \frac{r_n}{n} \sum_{i=1}^{n}
        \left(
            \Jtilde_i
            -\E{\Jtilde_i}
            +\E{\Jtilde_i}
            -\Itilde_i
        \right)
        \\
        &
        =
        n r_n - \frac{r_n}{q_n} n q_n
        + \frac{r_n}{q_n} \Gamma_n
        - \frac{r_n}{q_n}
        \sum_{i=1}^n
        \left( \Jtilde_i - \e \Jtilde_i \right)
        =
        \frac{r_n}{q_n} \Gamma_n
        + \frac{r_n}{q_n}
        \left(\E{\Psi_n} - \Psi_n \right)
        .
    \end{align*}

    The event \(\Gammaover_n = \Gamma_n - 1\) happens if and only if \(\Jtilde_{Z} =
    \Jhat_{Z} = 1\), \(\Itilde_{Z} = 1\) and \(\Ihat_{Z} = 0\). Therefore
    \begin{align*}
        \p{\Gammaover_n = \Gamma_n - 1|\cF}
        &
        =
        \p{ \cup_{i=1}^{n}
        \left.
        \left[
            Z=i,
            \Jtilde_{i} = 1,
            \Itilde_{i}=1,
            \Jhat_{i} = 1,
            \Ihat_{i} = 0
        \right]\,
        \right| \cF}
        \\
        &
        =
        \sum_{i=1}^{n}
        \p{Z = i}
        \p{\left. \Jtilde_{i}=1, \Itilde_{i}=1 \, \right|\cF}
        \p{\Jhat_{i}=1, \Ihat_{i} = 0}
                                                                        \\
        &
        =
        \frac{1}{n}
        \sum_{i=1}^n
        \Itilde_i \left(
        \p{\Jhat_i = 1}
        -
        \p{\Ihat_i = 1}
        \right)
        =
        \frac{q_n}{n} \Gamma_n
        -
        \frac{r_n}{n} \Gamma_n
        ,
    \end{align*}
    where we again use that
    \([\Itilde_i = 1] \subseteq [\Jtilde_i = 1] \)
    and
    \([\Ihat_i = 1] \subseteq [\Jhat_i = 1] \).
    Therefore
    \begin{align*}
        \zeta_2 :=
        \Gamma_n - c \p{\Gammaover_n = \Gamma_n -1 | \cF}
        =
        \Gamma_n - \frac{n}{q_n} \left(
        \frac{q_n}{n} \Gamma_n
        -
        \frac{r_n}{n} \Gamma_n
        \right)
        =
        \frac{r_n}{q_n} \Gamma_n.
    \end{align*}

        It follows from Lemma \ref{pre:lem:exchangeable:pairs} that
    there exists a function \(f:\N_0 \to \R\) with
    \(\norm{\Delta f} \le \E{\Gamma_n}^{-1} = (n r_n)^{-1}\)
    and
    \(\norm{f} \le \E{\Gamma_n}^{-1/2} = (n r_n)^{-1/2}\), such that
    \begin{align*}
        \distTV{\Gamma_n,\Po(\e \Gamma_n)}
        &
        \le
        \E{
            \zeta_1 f(\Gamma_n+1)
        -
            \zeta_2 f(\Gamma_n)
        }
        \\
        &
        =
        \E {
            \left(
                \frac{r_n}{q_n} \Gamma_n + \frac{r_n}{q_n}
                \left(
                    \E{\Psi_n}-\Psi_n
                \right)
            \right)
            f(\Gamma_n+1)
            -
            \frac{r_n}{q_n}
            \Gamma_n
            f(\Gamma_n)
        }
        \\
        &
        =
        \E{
            \frac{r_n}{q_n} \Gamma_n \left( f(\Gamma_n+1) - f(\Gamma_n)\right)
            +
            f(\Gamma_n+1) \frac{r_n}{q_n}
                \left(
                    \E{\Psi_n}-\Psi_n
                \right)
        }
        \\
        &
        \le
        \frac{r_n}{q_n}
        \E{
            \Gamma_n \cdot \left| f(\Gamma_n+1) - f(\Gamma_n)\right|
        }
        +
        \frac{r_n}{q_n}
        \E{
            |f(\Gamma_n+1)| \cdot
                \left|
                    \E{\Psi_n}-\Psi_n
                \right|
        }
        \\
        &
        \le
        \frac{r_n}{q_n}
        \norm{\Delta f}
        \E{
            \Gamma_n
        }
        +
        \frac{r_n}{q_n}
        \norm{f}
        \E{
            \left| \E{\Psi_n}-\Psi_n \right|
        }
        \\
        &
        \le
        \frac{r_n}{q_n}
        \frac{1}{n r_n} n r_n
        +
        \frac{r_n}{q_n}
        \frac{1}{\sqrt{n r_n}}
        \left(\V{\Psi_n} \right)^{1/2}
        \\
        &
        =
        \frac{r_n}{q_n}
        +
        \left[
            \frac{r_n \V{\Psi_n}}{n q_n^2}
        \right]^{1/2}
        := \alpha_1 + \alpha_2
        .
    \end{align*}

    Let \(\pmax := \max_{i \ge 0} p_i\).
    By the assumptions of Theorem \ref{intro:thm:fringe:count}, \(\V{\xi} > 0\), which implies that \(\xi\) is
    not a constant. Thus \(\pmax < 1\).
    By Lemma \ref{fringe:lem:tree:count:expectation}, we have
    \[
        r_n
        =
        \frac{\e \Gamma_{n}}{n}
        \sim
        \pi(T_n)
        :=
        \p{\gw = T_n}
        =
        p_{d_1} p_{d_2} \cdots p_{d_{k}}
        \le \pmax^{k}.
    \]
    And by Lemma \ref{fringe:lem:tree:count:expectation} and Lemma \ref{pre:lem:prob:T:eql:n}
    \[q_n
        = \frac{\e \Psi_{n}}{n}
        \sim \pi(\tsetkn) := \p{|\gw|= k} = \Theta(k^{-3/2}).\]
    Therefore \(\alpha_1 = r_n/q_n = O(\pmax^{k} /k^{-3/2}) = o(1)\).

    By Lemma \ref{fringe:lem:tree:count:var}, we have
    \begin{align*}
        \alpha_2^{2}
        =
        \frac{r_n \V{\Psi_n}}{n q_n^2}
        &
        =
        \frac{r_n}{n(1+o(1)) \pi(\tsetkn)^2} (n \pi(\tsetkn))^2
        \left(
        O\left( \frac{1}{n} \right)
        +
        O\left( \frac{k}{n^{3/2}} \right)
        +
        O\left( \frac{k^2}{n^2} \right)
        \right)
        \\
        &
        =
        O(r_n)
        +
        O\left(\frac{r_n k}{n^{1/2}} \right)
        +
        O\left( \frac{r_n k^{2}}{n} \right)
        \to 0,
    \end{align*}
    since
    \(r_n \le \pmax^{k} \to 0\),
    \(r_n k \le \pmax^{k} k \to 0 \) and \(k = o(n)\).
    Therefore \(\alpha_2 \to 0\).
    Thus \(\distTV{\Gamma_n, \Po(\e \Gamma_n)} \le \alpha_1 + \alpha_2 \to 0\).
    Since this upper bound holds whenever \(T \in \tsetk\), the lemma follows.
\end{proof}

\begin{proof}[Proof of Theorem \ref{intro:thm:fringe:count}]
    Let \(k = k_n\).
    By Lemma \ref{fringe:lem:tree:count:expectation},
    \begin{align*}
        \sup_{T \in \tsetk}
        \left|
        \frac{\e \fringeT}{n \pi(T)} - 1
        \right|
        =
        O\left( \frac{k}{n} \right) +o \left( n^{-1/2} \right).
    \end{align*}
    Therefore, for all \(T \in \tsetk\), we have
    \begin{align*}
        \frac
        {
            \left|
            n \pi(T) - \e \fringeT
            \right|
        }
        {
            \sqrt{n \pi(T)}
        }
        &
        =
        \sqrt{n \pi(T)}
        \left(O\left( \frac{k}{n} \right) +o \left( n^{-1/2} \right) \right)
        \\
        &
        =
        O\left(
        \sqrt{ \frac{k^2 \pi(T)}{n}}
        \right)
        + o(\pi(T))
        =
        \smallo{\sqrt{k \pmax^{k}}}
        ,
    \end{align*}
    where the last step uses \(k = o(n)\) and \(\pi(T) \le \pmax^{k}\).
    It follows from Lemma \ref{pre:lem:poisson:tv:distance} that
    \[
        \sup_{T \in \tsetk}
        \distTV{\Po(n \pi(T)), \Po(\e \fringeT)}
        \le
        \bigO{
        \sup_{T \in \tsetk}
        \frac
        {
            \left|
            n \pi(T) - \e \fringeT
            \right|
        }
        {
            \sqrt{n \pi(T)}
        }
        }
        =
        \smallo{\sqrt{k \pmax^{k}}}
        .
    \]

    Thus by Lemma \ref{fringe:lem:tree:count:distance}
    and the triangle inequality,
    \begin{align*}
        &
        \sup_{T \in \tsetk}
        \distTV{\fringeT, \Po(n \pi(T))}
        \\
        &
        \le
        \sup_{T \in \tsetk}
        \distTV{\fringeT, \Po(\e \fringeT)}
        +
        \sup_{T \in \tsetk}
        \distTV{\Po(n \pi(T)), \Po(\e \fringeT)}
        \\
        &
        = o(1) + \smallo{\sqrt{k \pmax^{k}}}
        = o(1).
    \end{align*}
    Statements (i)--(iii) of theorem follow by Lemma \ref{intro:lem:poisson:distribution}.
\end{proof}

\subsection{The sketch of the proof of Theorem \ref{intro:thm:fringe:set:count}}

Let \(\cA_n \subseteq \tsetkn\) be a sequence of sets of trees, i.e., \(\cA_n\) is a set of some
trees of size \(k_n\).
Theorem \ref{intro:thm:fringe:set:count} states that
\(\sup_{\cA_n \subseteq \tsetkn} \distTV{\treecount_{\cA_n}(\gwn), \pi(\cA_n)}
\to 0\).
We can show this by adapting the proof of Theorem \ref{intro:thm:fringe:count}.
The key step is to construct an exchangeable pair \(
(\treecount_{\cA_n}(\gwn),\treecount_{\cA_n}(\overgwn)) \)
as in the proof of Lemma \ref{fringe:lem:tree:count:distance}
with new definitions of \(\Itilde_i, \Jtilde_i, \Ihat_i\) and \(\Jhat_i\).

Let \(\cD(\cA_n)\) be the set of preorder degree sequences of trees in \(\cA_n\).
Recall that \( \xitildenvec := (\xitildenlist)\) is a uniform random rotation of the preorder degree
sequence of \(\gwn\).
Let
\[
    \Itilde_i = \iverson{ (\xitilden_i, \xitilden_{i+1},\ldots, \xitilden_{i+k-1}) \in \cD(\tsetkn)},
    \qquad
    \Jtilde_i = \iverson{ \xitilden_i + \xitilden_{i+1}\cdots + \xitilden_{i+k-1} = k-1}.
\]
Then \(\treecount_{\cA_n}(\gwn)=\sum_{i=1}^{n} \Itilde_i\).
Let \( \xihatnvec:=(\xihatnlist) \) be an independent copy of \( \xitildenvec\).
Define \(\Ihat_i\) and \(\Jhat_i\) for \( \xihatnvec\) as \( \Itilde_i\) and \(\Jtilde_i\) for
\( \xitildenvec\).
Now we construct \( \xiovernvec \eql \xitildenvec\) as in Lemma \ref{fringe:lem:tree:count:distance},
which gives a random tree \(\overgwn \eql \gwn\).
Then \( (\fringeTn, \bfringeTn)\) is an exchangeable pair.
From here we can argue as in the rest of the proof for Theorem
\ref{intro:thm:fringe:count}.

\section{Families of fringe subtrees}

\label{sec:fringe:app}

In this section, we apply Theorem \ref{intro:thm:fringe:count} and \ref{intro:thm:fringe:set:count}
to study the conditions for \(\gwn\) to contain every tree that belongs to a family of trees.

\subsection{Coupon collector problem}

\label{sec:fringe:app:coupon}

As shown later, our problem is essentially a variation of the famous coupon collector
problem---if in every draw we get a coupon with a uniform random type among \(n\) types, how many
draws do we need to collect all \(n\) types of coupons?  The next lemma is about a generalization of this
problem needed later.
For the original problem, see \citet{erdos61coupon} and \citet{flajolet1992}.
For more about the generalized version defined below, see \citet{neal2008}.

\begin{lemma}[Generalized coupon collector]
    \label{family:lem:coupon}
    Let \(X_n\) be a random variable that takes values in \(\{1, \ldots, n\}\).
    Let \(p_{n,i} := \p{X_n = i }\).
    Assume that \(p_{n,i} > 0\) for all \(1 \le i \le n\).
    Let \(X_{n,1}, X_{n,2}, \ldots\) be i.i.d.\ copies of \(X_n\).
    Let
    \[
        N_n := \inf\{i \ge 1:|\{X_{n,1}, X_{n,2}, \ldots, X_{n,i}\}| = n\}.
    \]
    Let \(m_n\) be a sequence of real numbers.
    We have
    \[
        1-\sum_{i=1}^n (1-p_{n,i})^{m_n}
        \le
        \p{N_n \le m_n}
        \le
        \frac{1}{\sum_{i=1}^n (1-p_{n,i})^{m_n}}.
    \]
\end{lemma}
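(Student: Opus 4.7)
The plan is to rewrite $\p{N_n \le m_n}$ in the standard coupon-collector way as the probability of covering all types, and then apply a first-moment bound for the lower estimate and a second-moment (Chung--Erd\H{o}s) bound for the upper estimate.

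Concretely, for $1 \le i \le n$ let $A_i$ be the event that type $i$ is never drawn in the first $m_n$ trials, so that by independence $\p{A_i} = (1-p_{n,i})^{m_n}$. Then $\{N_n > m_n\} = \bigcup_{i=1}^n A_i$. Setting $Y := \sum_{i=1}^n \iverson{A_i}$ and $s := \E Y = \sum_{i=1}^n (1-p_{n,i})^{m_n}$, the lower bound follows immediately from the union bound:
\[
 \p{N_n \le m_n} = 1 - \p{\textstyle\bigcup_i A_i} \ge 1 - \sum_{i=1}^n \p{A_i} = 1 - s.
\]

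For the upper bound I would use the Chung--Erd\H{o}s / Cauchy--Schwarz inequality $\p{Y \ge 1} \ge (\E Y)^2 / \E Y^2$, which gives
\[
 \p{N_n \le m_n} = \p{Y = 0} \le 1 - \frac{s^2}{\E Y^2}.
\]
The key step is then to control $\E Y^2 = \sum_{i,j} \p{A_i \cap A_j}$ using the fact that the events $A_i$ are pairwise negatively correlated. Indeed, for $i \ne j$,
\[
 \p{A_i \cap A_j} = (1 - p_{n,i} - p_{n,j})^{m_n} \le \bigl((1-p_{n,i})(1-p_{n,j})\bigr)^{m_n} = \p{A_i}\p{A_j},
\]
since $(1-p)(1-q) = 1 - p - q + pq \ge 1 - p - q \ge 0$ and $x \mapsto x^{m_n}$ is monotone on $[0,1]$. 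Consequently
\[
 \E Y^2 \le \sum_i \p{A_i} + \sum_{i \ne j} \p{A_i}\p{A_j} \le s + s^2,
\]
and therefore $\p{Y = 0} \le 1 - s^2/(s + s^2) = 1/(1+s) \le 1/s$, which is the claimed upper bound (note the bound is trivial when $s < 1$ since the right-hand side then exceeds $1$).

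There is no genuine obstacle here; the only content of the argument is the observation that ``$i$ missing and $j$ missing'' are negatively correlated, which in this i.i.d. setting reduces to the elementary inequality $(1-p-q)^{m} \le (1-p)^m(1-q)^m$. The rest is the standard first- and second-moment machinery applied to the indicator sum $Y$.
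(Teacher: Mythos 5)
Your proposal is correct and takes essentially the same route as the paper: both express the event as the vanishing of the indicator sum of missing types, obtain the lower bound from the union (first-moment) bound, and obtain the upper bound from a second-moment argument whose only real input is the negative-correlation inequality \((1-p_{n,i}-p_{n,j})^{m_n} \le (1-p_{n,i})^{m_n}(1-p_{n,j})^{m_n}\). The paper finishes with Chebyshev's inequality, giving \(\p{Z_n=0}\le \V{Z_n}/(\E{Z_n})^2 \le 1/\E{Z_n}\), whereas you finish with Cauchy--Schwarz (Chung--Erd\H{o}s); this is a cosmetic variation that even yields the marginally sharper intermediate bound \(1/(1+s)\) before weakening to \(1/s\).
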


\begin{proof}
    Let \(m = m_n\).
    Let \(Z_{n,i} = \iverson{i \notin \{X_{n,1},\ldots, X_{n, m}\}}\).
    Then \(N_n \le m\) if and only if \(Z_n := \sum_{i=1}^n Z_{n,i} = 0\), i.e.,
    \(\p{N_n \le m} = \p{Z_n = 0}=1-\p{Z_n \ge 1}\).

    The first inequality of this lemma follows from the following:
    \[
        \p{Z_n \ge 1}
        \le \e Z_n
        = \sum_{i=1}^{n} \e Z_{n,i}
        = \sum_{i=1}^{n} \p{\cap_{j=1}^{m} X_{n,j} \ne i}
        = \sum_{i=1}^{n} \left( 1-p_{n,i} \right)^{m}.
    \]

    For \(1 \le i \ne j \le n\), we have
    \begin{align*}
                        \E{Z_{n,i} Z_{n,j}} - \E{Z_{n,i}}\E{Z_{n,j}}
                &
        =
                (1-p_{n,i} - p_{n,j})^{m}
        - (1-p_{n,i})^{m} (1-p_{n,j})^{m}
        \\
        &
        =
        (1-p_{n,i})^{m}
        \left[
                \left(1-\frac{p_{n,j}}{1-p_{n,i}}\right)^{m}
        - (1-p_{n,j})^{m}
        \right]
        < 0.
    \end{align*}
    Therefore
    \begin{align*}
        \V{Z_n}
        &
        =
        \sum_{1 \le i,j \le n} \E{Z_{n,i} Z_{n,j}} - \E{Z_{n,i}}\E{Z_{n,j}}
        \\
        &
        =
        \sum_{1 \le i \ne j \le n}
        \left(
        \E{Z_{n,i} Z_{n,j}} - \E{Z_{n,i}}\E{Z_{n,j}}
        \right)
        +
        \sum_{1 \le i \le n}
        \left(
        \E{Z_{n,i}} - \E{Z_{n,i}}^{2}
        \right)
        \le \e Z_{n}.
    \end{align*}
    Thus by Chebyshev's inequality, as in the second moment method
    (see e.g., \citet[chap.\ 4]{Alon2008}), we have
    \[
        \p{Z_n = 0}
        \le \p{|Z_n - \e Z_n| \ge \e Z_n}
        \le \frac{\V{Z_n}}{(\e Z_n)^2}
        \le \frac{1}{\e Z_n}
        =
        \frac{1}{\sum_{i=1}^n (1-p_{n,i})^{m_n}}
        .
        \qedhere
    \]
\end{proof}

\subsection{Complete $r$-ary fringe subtrees}

\label{sec:fringe:app:r:ary}

A tree \(T\) is called possible if \(\pi(T) > 0\).
Let \(r > 0\) be a fixed integer and \(h_n\) be a sequence of positive integers.
A simple application of Theorem \ref{intro:thm:fringe:count} is to find sufficient conditions
such that whp every (or not every) possible complete \(r\)-ary tree appears in \(\gwn\)
as fringe subtrees.

\newcommand{\rAryMaxHeight}{H_{n,r}}
\newcommand{\rArySet}{\cA_{h_n, r}}
\newcommand{\oneAryMaxHeight}{H_{n,1}}
\newcommand{\oneArySet}{\cA_{h_n, 1}}

Let \(h_n \to \infty\) be a sequence of positive integers.
Let \(\rArySet\) be the set of all \emph{possible} complete \(r\)-ary trees of height at most \(h_n\).
Let
\[
    \rAryMaxHeight
    = \max
    \left\{
        h: \gwn \text{ contains all trees in \(\rArySet\) as fringe subtrees}
    \right\}
    .
\]

\begin{lemma}
    \label{family:lem:r:ary}
    Assume Condition \ref{cond:offspring} and \(p_r > 0\) for some \(r \ge 2\).
    Let
    \[
        \alpha_r = \log_r
    \left(
        \log \frac{1}{p_0} + \frac{1}{r-1} \log \frac{1}{p_r}
        \right)
    .
    \]
    Let \(\omega_n \to \infty\) be an arbitrary sequence.
    \begin{enumerate}[label=(\roman*)]
        \item If \(h_n \le \log_r(\log n - \omega_n) - \alpha_r\), then whp \(\gwn\) contains all trees
            in \(\rArySet\) as fringe subtrees.
        \item If \(h_n \ge \log_r(\log n + \omega_n) - \alpha_r\), then whp \(\gwn\) does not contain
            all trees in \(\rArySet\) as fringe subtrees.
    \end{enumerate}
    Also, \[\rAryMaxHeight - \log_{r} \log n \inprob -\alpha_r.\]
\end{lemma}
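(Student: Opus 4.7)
The plan is to reduce everything to the single hardest tree. Writing $T_h$ for the unique complete $r$-ary tree of height $h$, a direct count gives $(r^h-1)/(r-1)$ internal nodes (each of degree $r$) and $r^h$ leaves, so
\[
    \pi(T_h) = p_0^{r^h} \, p_r^{(r^h - 1)/(r-1)},
    \qquad -\log \pi(T_h) = r^h \beta_r + O(1),
\]
where $\beta_r := \log(1/p_0) + (r-1)^{-1} \log(1/p_r) = r^{\alpha_r}$. Note that $p_0 > 0$ is automatic from $\e \xi = 1$ and $\V{\xi} > 0$, so $T_h$ is possible whenever $p_r > 0$. The structural observation on which everything hinges is that $T_h$ is itself a fringe subtree of $T_{h'}$ for every $h \le h'$ (take the subtree rooted at any vertex of $T_{h'}$ at depth $h' - h$), and the fringe of a fringe is a fringe; hence a single occurrence of $T_{h_n}$ as a fringe subtree of $\gwn$ forces every $T_h$, $0 \le h \le h_n$, to occur as well. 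This lets us dispense with any union bound over the $\Theta(\log \log n)$ heights in $\rArySet$.

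With these in hand, (i) and (ii) become one-line applications of Theorem \ref{intro:thm:fringe:count}. For (i) the hypothesis rearranges to $n \pi(T_{h_n}) \ge e^{\omega_n - O(1)} \to \infty$; since $h_n \to \infty$ gives $|T_{h_n}| \to \infty$ and $|T_{h_n}| = O(r^{h_n}) = O(\log n) = o(n)$, part (iii) of the theorem yields $\treecount_{T_{h_n}}(\gwn) \inprob \infty$, so $T_{h_n}$ occurs in $\gwn$ whp, and by the containment observation every member of $\rArySet$ does. For (ii) the hypothesis gives $n \pi(T_{h_n}) \le e^{-\omega_n + O(1)} \to 0$, so part (i) of Theorem \ref{intro:thm:fringe:count} gives $\treecount_{T_{h_n}}(\gwn) = 0$ whp; since $T_{h_n} \in \rArySet$, not every tree in $\rArySet$ occurs.

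For the final assertion, fix any slowly growing $\omega_n$ (e.g.\ $\omega_n = \log \log n$), so that $\log_r(\log n \pm \omega_n) - \alpha_r = \log_r \log n - \alpha_r + o(1)$. For $\epsilon > 0$, applying (i) with the choice $h_n = \lfloor \log_r \log n - \alpha_r - \epsilon \rfloor$ and (ii) with $h_n = \lceil \log_r \log n - \alpha_r + \epsilon \rceil$ shows that, with probability tending to $1$, $\rAryMaxHeight - \log_r \log n$ lies in $(-\alpha_r - \epsilon,\, -\alpha_r + \epsilon)$. Since $\epsilon$ is arbitrary, this is the claimed convergence in probability.

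No step is genuinely hard: the Poisson approximation of Theorem \ref{intro:thm:fringe:count} does all of the probabilistic work, and the only care needed is the elementary identity $-\log \pi(T_h) = r^h \beta_r + O(1)$ together with the observation that smaller complete $r$-ary trees embed as fringe subtrees of larger ones, which collapses a potential union bound over heights into a single event.
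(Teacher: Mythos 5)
Your proposal is correct and follows essentially the same route as the paper: reduce to the single tallest complete $r$-ary tree via the observation that its occurrence as a fringe subtree forces all shorter ones, compute $\pi(T_{h_n})=p_0^{r^{h_n}}p_r^{(r^{h_n}-1)/(r-1)}$, and feed $n\pi(T_{h_n})\to\infty$ or $\to 0$ into Theorem \ref{intro:thm:fringe:count}. The only gloss (shared with the paper) is that in case (ii) one should formally apply the theorem to the complete $r$-ary tree at the threshold height rather than to $T_{h_n}$ itself, so that its size is guaranteed to be $O(\log n)=o(n)$.
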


\begin{proof}
    Let \(\treerary_{h_n}\) denote the complete \(r\)-ary tree of height \(h_n\). Note that if
    \(\treerary_{h_n}\) appears in
    \(\gwn\) as a fringe subtree, then every tree in \(\rArySet\) also appears in \(\gwn\) as a fringe
    subtree.
    The tree \(\treerary_{n}\) has \(\ell_n:=r^{h_n}\) leaves and
    \(v_n:= (r^{h_n}-1)/(r-1)=(\ell_n-1)/(r-1)\) internal vertices, which all have degree \(r\).
    Thus we have
    \[
        \pi(\treerary_{h_n}) := \p{\gw = \treerary_{h_n}} = p_{r}^{v_n} p_{0}^{\ell_n}
        .
    \]
    If \(h_n \le \log_r(\log n - \omega_n) - \alpha_r\), then
    \[
        \ell_n = r^{h_n} \le \frac{\log n - \omega_n}{r^{\alpha_r}}.
    \]
    Therefore
    \begin{align*}
        \log \frac{1}{\pi(\treerary_{h_n})}
        & =
        v_n \log \frac{1}{p_r} + \ell_n \log \frac{1}{p_0}
        \\
        & =
        \frac{\ell_n-1}{r-1} \log \frac{1}{p_r} + \ell_n \log \frac{1}{p_0}
        \\
        & =
        \ell_{n} \left( \frac{1}{r-1} \log \frac{1}{p_r} + \log \frac{1}{p_0} \right) +O(1)
        \\
        &
        \le
        \frac{\log n - \omega_n}{r^{\alpha_r}} r^{\alpha_r} + O(1)
        \\
        &
        = \log n - \omega_n + O(1).
    \end{align*}
    Thus \(\log (n \pi(\treerary_{h_n})) \ge \omega_n + O(1) \to \infty\), which implies that \(n
    \pi(\treerary_{h_n}) \to \infty\).
    It follows from Theorem \ref{intro:thm:fringe:count} that \(\treecount_{\treerary_{h_n}}(\gwn) \inprob \infty\).
    Thus (i) is proved.

    Similar computations show that with the assumptions of (ii), we have \(n
    \pi(\treerary_{h_n}) \to 0\), which implies that \(\treecount_{\treerary_{h_n}}(\gwn) \inprob 0\) by Theorem \ref{intro:thm:fringe:count}.
    The last statement of the lemma follows directly from (i) and (ii).
\end{proof}

We have a similar result for the set of \(1\)-ary trees (chains) of height at most \(h\).
The proof is virtually identical to the previous lemma and we leave it to the reader.
\begin{lemma}
    \label{family:lem:fringe:chain}
    Assume Condition \ref{cond:offspring} and \(p_1 > 0\).
    Let \(\omega_n \to \infty\) be an arbitrary sequence.
    We have:
    \begin{enumerate}[label=(\roman*)]
        \item If \(h_n \le (\log n - \omega_n)/\log \frac{1}{p_1}\), then whp \(\gwn\) contains
            all trees in \(\oneArySet\) as fringe subtrees.
        \item If \(h_n \ge (\log n + \omega_n)/\log \frac{1}{p_1}\), then whp \(\gwn\) does not contain
            all trees in \(\oneArySet\) as fringe subtrees.
    \end{enumerate}
    Therefore \[\frac{\oneAryMaxHeight}{\log_{{1}/{p_1}}(n)} \inprob 1.\]
\end{lemma}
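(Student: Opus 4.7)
The plan is to imitate the proof of Lemma \ref{family:lem:r:ary}, replacing the complete $r$-ary tree with a chain. Write $L_h$ for the unique plane tree of height $h$ in which every internal node has a single child, so $L_h$ has $h$ internal nodes of degree $1$ and one leaf. Condition \ref{cond:offspring} forces $p_0 > 0$ (otherwise $\xi \ge 1$ a.s.\ together with $\E\xi = 1$ would give $\xi \equiv 1$, contradicting $\V\xi > 0$), and combined with $p_1 > 0$ this makes every chain possible with
\[
    \pi(L_{h_n}) = p_1^{h_n}\, p_0.
\]

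The key structural observation is a monotonicity that collapses both parts to a single fringe subtree count: if $L_{h_n}$ occurs as a fringe subtree of $\gwn$ rooted at some node $v$, then the fringe subtree rooted at the $j$-th descendant of $v$ along that chain is a copy of $L_{h_n - j}$ for each $0 \le j \le h_n$. Hence $\gwn$ contains every tree of $\oneArySet$ as a fringe subtree if and only if $\treecount_{L_{h_n}}(\gwn) \ge 1$, so it suffices to analyze this single count.

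For (i), the hypothesis $h_n \le (\log n - \omega_n)/\log(1/p_1)$ gives $p_1^{h_n} \ge e^{\omega_n}/n$, whence $n \pi(L_{h_n}) \ge p_0\, e^{\omega_n} \to \infty$. If $h_n \to \infty$, Theorem \ref{intro:thm:fringe:count}(iii) yields $\treecount_{L_{h_n}}(\gwn) \inprob \infty$; if $h_n$ stays bounded along a subsequence, the same conclusion follows from the simpler Theorem \ref{intro:thm:count}. In either case the monotonicity observation closes part (i). For (ii), the symmetric estimate gives $n \pi(L_{h_n}) \le p_0\, e^{-\omega_n} \to 0$ (and $h_n \to \infty$ automatically), so Theorem \ref{intro:thm:fringe:count}(i) gives $\treecount_{L_{h_n}}(\gwn) = 0$ whp, leaving $\gwn$ without a copy of $L_{h_n}$.

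The last display follows by feeding (i) and (ii) an arbitrary $\omega_n \to \infty$: $\oneAryMaxHeight/\log_{1/p_1}(n)$ is sandwiched between $1 - \omega_n/\log n$ and $1 + \omega_n/\log n$ whp, which forces convergence in probability to $1$. I do not anticipate any serious obstacle. The only small point of care is the bounded-$h_n$ case of (i), handled by Theorem \ref{intro:thm:count}, and the observation that $|L_{h_n}| = h_n + 1 = O(\log n) = o(n)$ keeps us within the regime where Theorem \ref{intro:thm:fringe:count} applies.
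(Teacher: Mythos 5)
Your proof is correct and is precisely the adaptation of the proof of Lemma \ref{family:lem:r:ary} that the paper intends (it omits the argument as ``virtually identical''): the chain-monotonicity observation, the computation \(\pi(L_{h_n}) = p_0\, p_1^{h_n}\), and the appeal to Theorem \ref{intro:thm:fringe:count} follow the same template. One tiny remark: the bound \(|L_{h_n}| = h_n + 1 = O(\log n) = o(n)\) you invoke is guaranteed only under the hypothesis of (i); in (ii), when \(h_n\) is very large, your own monotonicity observation lets you replace \(h_n\) by a truncated height of order \(\log n\) so that Theorem \ref{intro:thm:fringe:count} still applies --- the paper's \(r\)-ary proof passes over the same point.
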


\subsubsection{Binary trees}
\label{family:sec:bin:trees}

    Consider \(\gwn\) with a binomial \( (2,1/2)\) offspring distribution, i.e., \(p_0=p_2=1/4\) and \(p_1=1/2\).
    Let \(\gwbin\) be \(\gwn\) with each degree-one node labeled of having a left or a right child
    uniformly and independently at random.
    Then \(\gwbin\) is a tree in which nodes have a left position and a right position where child
    nodes can attach, and each position can be occupied by at most one child.
    We call such a tree a \emph{binary tree}.
    Figure \ref{family:fig:binary:tree} gives an example of \(\gwbin\) for
    \(n=7\).
    \begin{figure}[ht!]
    \centering
        \begin{tikzpicture}
        \node[anchor=south west,inner sep=0] at (0,0) {\includegraphics{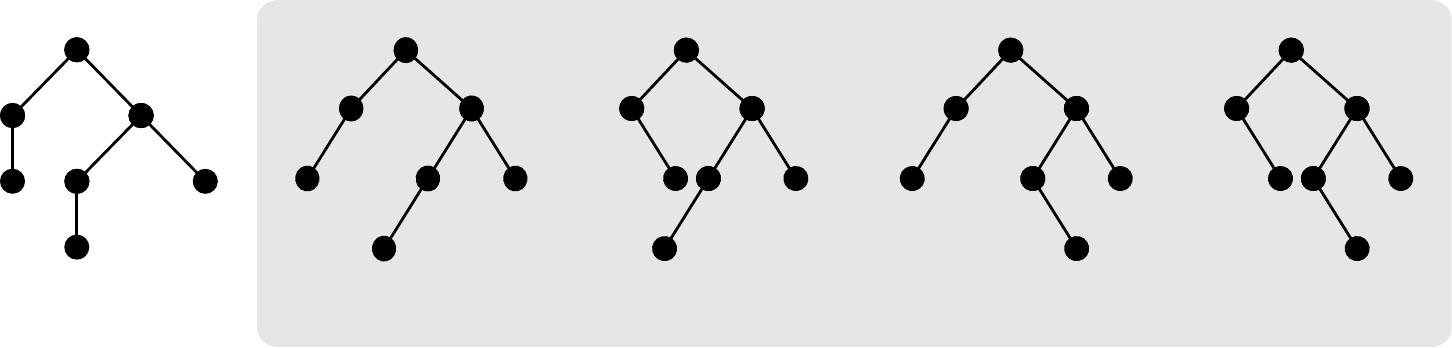}};
        \node at (1.2,0.4) {\small $T$};
        \node at (8.5,0.4) {\small Possible outcomes of $\gwbi_7$ given that \(\gw_{7}=T\)};
        \end{tikzpicture}
        \caption{Example of \(\gwbin\).}
        \label{family:fig:binary:tree}
    \end{figure}

    Let \(\Tnbin\) be a binary tree of size \(n\) that has \(n_0,n_1,n_2\) nodes of degree
    \(0,1,2\) respectively, where \(n_0+n_1+n_2=n\). Let \(T_n\) be \(\Tnbin\) with the difference between left and right
    children being forgotten.
    We have \(\p{\gwbin = \Tnbin | \gwn = T_n} = 1/2^{n_1}\), since there are in total
    \(2^{n_1}\) ways to label the \(n_1\) degree-one nodes of \(T_n\).
    Therefore
    \begin{align*}
        \p{\gwbin = \Tnbin}
        &
        =
        \p{\gwbin = \Tnbin | \gwn = T_n}
        \p{\gwn = T_n}
        \\
        &
        =
        \p{\gwbin = \Tnbin | \gwn = T_n}
        \frac{\p{\gw = T_n}}{\p{|\gw|=n}}
        \\
        &
        = \frac{1}{2^{n_1}}\frac{1}{4^{n_0 n_2}2^{n_1}} \frac{1}{\p{|\gw|=n}}
        = \frac{1}{4^{n}\p{|\gw|=n}}
        .
    \end{align*}
    In other words, as is well-known from the connection between simply generated trees and
    Galton-Watson trees of size \(n\), see, e.g., \citet[pp.\ 132]{janson2012simply},
    \(\gwbin\) is uniformly distributed among all binary trees of size \(n\).

    Thus our analysis of maximum \(r\)-ary fringe subtree in \(\gwn\) can be easily adapted to
    uniform random binary trees.  For example, an argument similar to Lemma
    \ref{family:lem:fringe:chain} shows that the maximum one-ary fringe subtree (chain) in
    \(\gwbin\) that consists of only left children has length about \(\log_{4} n\).
\subsection{All possible fringe subtrees}

\label{sec:fringe:app:all:possible}

Recall that \(\tsetplus{k}\) denotes the set of all possible trees of size at most \(k\), i.e.,
\[
    \tsetplus{k} := \{T \in \tset : |T| \le k, \p{\gw = T} > 0\}.
\]
Also recall that
\[
    \pSetMaxSize :=
    \max
    \{
        k: \tsetplus{k} \subseteq \cup_{v \in \gwn} \{\gw_{n,v}\}
    \}
    .
    \]
We would like to study the growth of \(\pSetMaxSize\) with \(n\).

Let \(\Ge(p)\) denote Geometric \(p\) distribution, i.e., \(\p{\Ge(p) = i} = p (1-p)^i\) for \(i \in \N_0\).
Let \(\Be(p)\) denote Bernoulli \(p\) distribution and let \(\Bi(d, p)\) denote Binomial
\( (d, p)\) distribution.
Recall that \(\Po(\lambda)\) is a Poisson\((\lambda)\) random variable.
Table \ref{table:distributions} shows five types of well-known conditional Galton-Watson trees.
See \citet[sec.\ 10]{janson2012simply} for more examples.

\begin{table}[h!]
  \centering
  \begin{tabular}{ | c | >{$}l<{$} >{$}l<{$} |}
  \hline
  Name & \multicolumn{2}{c|}{Definition} \\
  \hline
    Plane trees   & \xi \eql \Ge(1/2) & p_i=1/2^{i+1} \quad (i \ge 0) \\
    Full binary trees & \xi \eql 2 \Be(1/2) &  p_0=p_2 = 1/2 \\
    Motzkin trees & \xi \text{ uniform in } \{0,1,2\} & p_0=p_1=p_2 = 1/3 \\
    \(d\)-ary trees & \xi \eql \Bi(d, 1/d)  & p_i = \binom{d}{i}\left(1-\frac 1 d\right)^{i} \left(\frac 1 d\right)^{d-i} \quad (0 \le i \le d)  \\
    Labeled trees & \xi \eql \Po(1)  & p_i=e^{-1}/{i!} \\
  \hline
\end{tabular}
\caption{Some well-known conditional Galton-Watson trees.}
\label{table:distributions}
\end{table}

We can assume that \(\p{|\gw| = k_n} > 0\) for all \(n \in \N\).
Otherwise, let \(k_n' := \max\{i \le k_n: \p{|\gw| = i} > 0\}\).
It is not difficult to show that \(k_n - k_n' \le h = O(1)\) for \(k_n\) large.
(See \cite[lem.\ 12.3]{janson2012simply} for details.)
Thus this assumption does not change results in this
subsection.

\citeauthor{janson2012simply} showed that \(\gwrand \inlaw \gw\)
\cite[thm.\ 7.12]{janson2012simply}.
In other words, fringe subtrees on average behave like unconditional
Galton-Watson trees.
Let \(\tleast\) be a tree \(T \in \tsetplus{k}\) that minimizes \(\p{\gw = T}\).
Then \(\tleast\) also is the least likely tree to appear in \(\gwn\) as
fringe subtree among all trees in \(\tsetplus{k}\) when \(n\) is large. So intuitively if whp \(\tleast\) appears in
\(\gwn\), then every tree in \(\tsetplus{k}\) should also appears whp. And if whp \(\tleast\) does not appear,
then whp there is at least one tree in \(\tsetplus{k}\) that is missing.
Therefore, the problem can be reduced to finding
\[
    \pleastk := \p{\gw = \tleast} = \min_{T \in \tsetplus{k}} \p{\gw = T}.
\]

\begin{lemma}
    \label{faimly:lem:all:appear:condition}
    Assume Condition \ref{cond:offspring}.
    If \(k_n \to \infty\) and
    \( n \pleast_{k_n}/k_n \to \infty \),
    then  whp every tree in
    \(\tsetpluskn\) appears in \(\gwn\).
\end{lemma}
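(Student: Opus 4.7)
The plan is to apply Markov's inequality to the number of missing trees $M_n := \sum_{T \in \tsetpluskn} \iverson{\fringeT = 0}$. Since $\p{M_n \ge 1} \le \e M_n$, it suffices to show
\[
    \e M_n = \sum_{T \in \tsetpluskn} \p{\fringeT = 0} \to 0.
\]
By Theorem \ref{intro:thm:fringe:count}, for each $T$ of size $k \le k_n$ we have $\p{\fringeT = 0} \le e^{-n \pi(T)} + \distTV{\fringeT, \Po(n\pi(T))}$. Using $\pi(T) \ge \pleast_{k_n}$ and the Catalan bound $|\cS_k| \le 4^{k-1}$ on the number of plane trees of size $k$, we get $|\tsetpluskn| \le \sum_{k=1}^{k_n} |\cS_k| = O(4^{k_n})$, so
\[
    \sum_{T \in \tsetpluskn} e^{-n \pi(T)} \le O(4^{k_n}) \, e^{-n \pleast_{k_n}} = e^{O(k_n) - n \pleast_{k_n}} \to 0
\]
precisely under the hypothesis $n\pleast_{k_n}/k_n \to \infty$.

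The main obstacle is aggregating the Poisson-approximation error $\sum_T \distTV{\fringeT, \Po(n\pi(T))}$ over a family $\tsetpluskn$ that may be exponential in $k_n$; the per-tree bound from Theorem \ref{intro:thm:fringe:count} is $o(1)$, but multiplied by $|\tsetpluskn|$ it need not vanish. To circumvent this, I would abandon the per-tree Poisson approximation and work size by size via the generalized coupon-collector bound (Lemma \ref{family:lem:coupon}). For fixed $k \le k_n$, Theorem \ref{intro:thm:fringe:set:count} gives the sharp concentration of $\fringeSk$ around $n\pi(\cS_k) = \Theta(n/k^{3/2})$; conditionally on this count, I would argue that the multiset of size-$k$ fringe subtrees is close in total variation to an i.i.d.\ sample of size $\sim n\pi(\cS_k)$ from $\cL(\gw \mid |\gw|=k)$. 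Applying Lemma \ref{family:lem:coupon} with $m \sim n\pi(\cS_k)$ draws and minimum coupon probability $\pleast_k/\pi(\cS_k)$ then yields
\[
    \p{\text{some possible } T \in \cS_k \text{ is not a fringe subtree of } \gwn} \le O(|\cS_k|) \, \exp(-n \pleast_k).
\]
Summing over $k \le k_n$, using $\pleast_k \ge \pleast_{k_n}$ and the Catalan bound once more, recovers $O(4^{k_n}) e^{-n\pleast_{k_n}} \to 0$ as in the first display.

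The heart of the argument is the coupling between the size-$k$ fringe subtrees of $\gwn$ and i.i.d.\ draws from $\cL(\gw \mid |\gw|=k)$. I expect this to follow by combining the rotation identity of Lemma \ref{pre:lem:tree:deg:seq:rotation}, the local-limit estimates of Lemma \ref{pre:lem:prob:sum:of:iid}, and a subtree-switching construction in the spirit of Lemma \ref{fringe:lem:tree:count:distance}, but applied simultaneously to many non-overlapping preorder windows of length $k$ rather than a single one. This is where the main technical care is needed, and is what bridges the gap between the classical coupon-collector bound in an i.i.d.\ model and the genuinely conditional Galton-Watson setting of the lemma.
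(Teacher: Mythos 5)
Your proposal follows essentially the same route as the paper's proof: use Theorem \ref{intro:thm:fringe:set:count} to guarantee $\Theta(n\pi(\cS_k))$ fringe subtrees of size $k$, couple their shapes with i.i.d.\ copies of \(\gw_k := (\gw \mid |\gw|=k)\), and conclude with the generalized coupon collector (Lemma \ref{family:lem:coupon}) together with the Catalan bound \(O(4^{k})\) on \(|\tsetplus{k}|\), which is exactly the paper's argument (you even correctly diagnose why the naive per-tree union bound via Theorem \ref{intro:thm:fringe:count} fails). The only substantive differences are at the step you flag as the crux: the paper obtains the coupling not by an approximate TV/subtree-switching argument over many windows but as an exact resampling identity (conditionally on the number of size-$k$ fringe subtrees, replacing each of them by an independent copy of \(\gw_k\) leaves the conditional law of \(\gwn\) unchanged), and your size-by-size summation over all \(k \le k_n\) would need the quantitative variance bound of Lemma \ref{fringe:lem:tree:count:var} (Chebyshev, uniformly in \(k\)) rather than the purely asymptotic statement of Theorem \ref{intro:thm:fringe:set:count}, which is stated only for \(k_n \to \infty\).
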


\begin{proof}    Let \(k = k_n\).
    Recall that \(\pmax := \max_{i \ge 0} p_i\) and \(\pmax < 1\).
    Therefore \(\pleastk \le \pmax^{k}\).
    Thus we can assume that \(k \le 2 \log(n)/\log(\pmax^{-1})\) when \(k\) is large.
    Otherwise we have \(n \pleastk \le n^{-1} \to 0\), which contradicts the assumption.

    Thus by Theorem \ref{intro:thm:fringe:set:count},
    \(\fringeSk \ge y_n := \ceil{\frac{1}{2} n \p{\gw = |k|}}\) whp.
    Let \(A_n\) be the event that \(\gwn\) contains all possible trees of size \(k\) as fringe
    subtrees.
    Let \(B_n(i)\) be the event that \(\fringeSk = i\) for given
    \(i \ge y_n\).
    Thus
    \begin{align*}
        \p{A_n}
        &
        =
        \p{A_n \cap [\fringeSk < y_n]} +
        \sum_{i \ge y_n} \p{A_n | B_n(i)} \p{B_{n}(i)}
        \\
        &
        \ge
        \p{A_n | B_n(y_n)} \sum_{i \ge y_n} \p{B_{n}(i)},
        \\
        &
        =
        \p{A_n | B_n(y_n)}(1-o(1)),
    \end{align*}
    where the inequality comes from the obvious fact that \(\p{A_n|B_n(a)} \ge \p{A_n|B_n(b)}\)
    given that \(a \ge b\).
    So it suffices to prove that \(\p{A_n^c|B_n(y_n)} \to 0\).

\newcommand{\gwstar}{\gwraw^{*}}

    We can show that this is equivalent to a coupon collector problem.
    Let \( \gwn|B_n(y_n) \) be \(\gwn\) restricted to the event \(B_n(y_n)\).
        Let \(\gwstar\) be a random tree distributed as \(\gwn|B_{n}(y_n)\).
    Replace each of its \(y_n\) subtrees with an independent copy of \(\gw_k\).
    The result is still a random tree distributed as \( \gwn|B_n(y_n)\). (We leave the verification
    to the readers.)

    So \(\p{A_n|B_n(y_n)}\) equals the probability of that \(y_n\) independent copies of
    \(\gw_k\) contain every tree in \(\tsetplus{k}\).
    It follows from Lemma \ref{family:lem:coupon} (the coupon collector) that
    \begin{align*}
        \p{A_n^c|B_n(y_n)}
        &
        \le
        \sum_{T \in \tsetplus{k}} \left( 1-\p{\gw_k = T} \right)^{y_n}
        \le
        \sum_{T \in \tsetplus{k}}
        \exp\left\{
            - y_n \p{\gw_k = T}
        \right\}
        \\
        &
        \le
        \sum_{T \in \tsetplus{k}}
        \exp\left\{
            - \frac{1}{2} n \p{|\gw| = k}
            \frac{\p{\gw = T}}{\p{|\gw| = k}}
        \right\}
        \\
        &
        \le
        O(|\tsetplus{k}|)
        \exp\left\{
            - n \pleastk
        \right\}
        .
    \end{align*}
    It is well-known that the number of plane trees of size exactly \(k\) is \(4^{k-1}/\sqrt{\pi
        k^{3}}(1 + o(1))\)
    See, e.g., \citet[pp.\ 406]{Flajolet2009}.
    It follows that there exists a constant \(C\) such that
    \(
    |\tsetplus{k}| \le \sum_{j=1}^{k} C {4^{j-1}} \le C 4^{k}
    \)
    for all \(k \in \N\).
    Thus for large enough \(k\), the last expression above is at most
    \begin{align*}
        C4^{k}
        \exp\left\{
            - n \pleastk
        \right\}
        =
        C
        \exp\left\{
            k \log(4)
            - {n} \pleastk
        \right\}
        \to 0
        .
    \end{align*}
    Therefore we have \(\p{A_n^c|B_n(y_n)} \to 0\).
\end{proof}

\begin{theorem}
    \label{family:thm:fringe:all:appear}
    Assume Condition \ref{cond:offspring}.
    Assume that as \(k \to \infty\),
    \[
        {\log (1/ {\pleastk} )} \sim {\gamma k^{\alpha} (\log k)^{\beta}},
    \]
    where \(\alpha \ge 1\), \(\beta \ge 0\), \(\gamma>0\) are constants.
    Let \(k_n \to \infty\) be a sequence of positive integers.
    Let \(m = \log n\).
    Then for all constants \(\delta > 0\), we have:
    \begin{enumerate}[label=(\roman*)]
        \item If
            \(k_n \le
            (1-\delta) [m / \gamma (\log m^{1/\alpha})^{\beta}
                            ]^{1/\alpha}
                ,
            \)
            then whp \(\gwn\) contains
            all trees in \(\tsetpluskn\) as fringe subtrees.
        \item If
            \(k_n \ge
            (1+\delta) [m / \gamma (\log m^{1/\alpha})^{\beta}
                            ]^{1/\alpha}
                ,
            \)
            then whp \(\gwn\) does not contain
            all trees in \(\tsetpluskn\) as fringe subtrees.
    \end{enumerate}
    As a result,
    \[
        \frac{K_n}{\left( {\log n}/{(\log \log n)^{\beta}} \right)^{1/\alpha}}
        \inprob 
        \left(
            \frac{\alpha^{\beta}}{\gamma}  
        \right)^{1/\alpha}.
    \]
\end{theorem}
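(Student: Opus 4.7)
The plan is to reduce the two directions to tools already established in the paper: part (i) to Lemma \ref{faimly:lem:all:appear:condition}, and part (ii) to part (i) of Theorem \ref{intro:thm:fringe:count} applied to the extremal tree of size $k_n$. The quoted limit for $K_n$ then follows from a sandwich argument once parts (i) and (ii) are in hand.

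Write $m = \log n$ and set $k_n = c\bigl(m/[\gamma (\log m^{1/\alpha})^\beta]\bigr)^{1/\alpha}$ for a constant $c>0$. The key preparatory computation is that $\log k_n = (1/\alpha)\log m + O(1) \sim \log m^{1/\alpha}$, so $(\log k_n)^\beta \sim (\log m^{1/\alpha})^\beta$; substituting into the hypothesis $\log(1/\pleastk) \sim \gamma k^\alpha (\log k)^\beta$ then gives $\log(1/\pleast_{k_n}) \sim \gamma k_n^\alpha (\log k_n)^\beta \sim c^\alpha m$, and hence $\log(n \pleast_{k_n}) \sim (1-c^\alpha) m$.

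For part (i), I take $c = 1-\delta$, so $c^\alpha < 1$ and the computation above yields $\log(n \pleast_{k_n}/k_n) \sim (1-c^\alpha)m - (1/\alpha)\log m \to +\infty$; Lemma \ref{faimly:lem:all:appear:condition} then supplies the conclusion. The statement for smaller $k_n$ follows immediately from monotonicity of the event $\{\tsetpluskn \subseteq \cup_{v} \{\gw_{n,v}\}\}$ in $k_n$. For part (ii), I take $c = 1+\delta$, so $c^\alpha > 1$ and the computation gives $n \pleast_{k_n} \to 0$. I would let $T_n^*$ be a tree in $\tsetpluskn$ with $\pi(T_n^*) = \pleast_{k_n}$, and set $k_n^* := |T_n^*|$. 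The hypothesis $\log(1/\pleastk) \to \infty$ forces $k_n^* \to \infty$ (otherwise $T_n^*$ would range over a fixed finite set of trees with bounded positive probabilities, contradicting $\pi(T_n^*) \to 0$). Since also $k_n^* \le k_n = o(n)$ and $n\pi(T_n^*) \to 0$, Theorem \ref{intro:thm:fringe:count}(i) applied to the sequence $T_n^*$ yields $\treecount_{T_n^*}(\gwn) = 0$ whp, which is already enough to conclude that $\tsetpluskn \not\subseteq \cup_v \{\gw_{n,v}\}$ whp; the case of larger $k_n$ is handled by the same monotonicity.

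Finally, to extract the limit for $K_n$, I rewrite the threshold as $\bigl(\alpha^\beta/\gamma\bigr)^{1/\alpha}\bigl(\log n/(\log \log n)^\beta\bigr)^{1/\alpha}$. Combining (i) and (ii) with $\delta$ arbitrary sandwiches $K_n$ between $(1-\delta)$ and $(1+\delta)$ times this quantity, giving the stated convergence in probability. The only non-routine point in the whole argument is establishing $k_n^* \to \infty$ in part (ii), which is essential for invoking Theorem \ref{intro:thm:fringe:count}(i); as noted, this follows from $\pleast_{k_n} \to 0$ under the growth hypothesis. Everything else is asymptotic bookkeeping with $\log$ and $\log\log$.
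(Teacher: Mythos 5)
Your proposal is correct and follows essentially the same route as the paper: part (i) reduces to Lemma \ref{faimly:lem:all:appear:condition} via the computation $\log(1/\pleast_{k_n}) \sim (1-\delta)^{\alpha} m$, part (ii) applies Theorem \ref{intro:thm:fringe:count}(i) to the least-likely tree $\tleast$, and the limit for $K_n$ follows by the sandwich with $\delta$ arbitrary. Your explicit verification that $|T_n^*| \to \infty$ (needed to invoke Theorem \ref{intro:thm:fringe:count}) is a detail the paper leaves implicit, but otherwise the arguments coincide.
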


The behavior of \(\pleastk\) varies for different offspring distributions.  But as mentioned in the
introduction, the types of trees that we are interested in all have \(\pleastk\) that
are covered by Theorem \ref{family:thm:fringe:all:appear}.
\begin{proof}
    Let \(k = k_n\).
    Part (i) assumes that
    \[
        k \le (1-\delta) \left[\frac m { \gamma (\log m^{1/\alpha})^{\beta}} \right]^{1/\alpha}.
        \]
    Taking a logarithm, we have
    \begin{align*}
        \log k
        &
        \le
        \log(1-\delta)
        +
        \frac{1}{\alpha}
        \log
        \frac{m}{ \gamma (\log m^{1/\alpha})^{\beta}}
                = (1+o(1))
        \log m^{1/\alpha}.
    \end{align*}
    Thus
    \begin{align*}
        \log \frac{1}{\pleastk}
        &
        \sim \gamma k^{\alpha} (\log k)^{\beta}
        \le (\gamma +o(1))
        \frac{(1-\delta)^{\alpha} m}{\gamma (\log m^{1/\alpha})^{\beta}}
        (\log m^{1/\alpha})^{\beta}
                        \sim  (1-\delta)^{\alpha} m.
    \end{align*}
    Therefore, recalling \(m = \log n\),
    \begin{align*}
        \log n\pleastk
        =
        m - \log\frac{1}{\pleastk}
        \ge
        m - (1+o(1)) (1-\delta)^{\alpha}m
        = \Omega(m).
    \end{align*}
    It follows that
    \begin{align*}
        \log k
                        -
        \log n \pleastk
        =
        O\left( \log m  \right)
        - \Omega(m) \to -\infty.
    \end{align*}
    Thus \(n \pleastk/ k \to \infty\) and it follows from Lemma
    \ref{faimly:lem:all:appear:condition} that whp \(\gwn\) contains every tree in \(\tsetplus{k}\) as a fringe
    subtree.

    Similar computations show that if
    \(k \ge (1+\delta) [m / \gamma (\log m^{1/\alpha})^{\beta} ]^{1/\alpha}\)
    then \(n \pleastk \to 0\).
    It follows from Theorem \ref{intro:thm:fringe:count} that \(\treecount_{\tleast}(\gwn) \inprob 0\).
    Thus whp \(\gwn\) does not contain every tree in \(\tsetplus{k}\) as a fringe subtree.
\end{proof}

The rest of this section is organized as follows.
In the next subsection, we give a general method of finding \(\pleastk\).
Then we divide offspring distributions in two categories and show that Theorem
\ref{family:thm:fringe:all:appear} is applicable to all the Galton-Watson trees listed in Table
\ref{table:distributions}.

\subsubsection{Computing \(\pleastk\)}

Let \(\cI_{k} := \{j: 1 \le j \le k, p_j > 0\}\).
Let \(L_1 := p_0\) and for \(k \ge 2\) let
\[
    L_k := \min \left\{ p_0 \left( \frac{p_i}{p_0} \right)^{1/i} : i \in \cI_{k-1} \right\}.
\]
Since \(L_{k}\) is non-increasing, \(L := \lim_{k \to \infty} L_k\) exists.
Equivalently, we have
\[
    L := \inf \left\{ p_0 \left( \frac{p_i}{p_0} \right)^{1/i} : i \in \N, p_i > 0 \right\}.
\]
\begin{theorem}
    \label{family:thm:pleast:and:L}
    Assume Condition \ref{cond:offspring}.
    We have
    \(
        \pleastToOneByk \to L
    \)
    as \(k \to \infty\),
    where the limit is taken along the subsequence \(k\) with \(\p{|\gw| = k} > 0\).
    As a result, we have \(L < 1\).
\end{theorem}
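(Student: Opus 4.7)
The approach is to rewrite $\log \pi(T)$ in terms of the degree multiplicities $n_j$ of $T$ and the numbers $\lambda_j := p_0(p_j/p_0)^{1/j}$, read off the lower bound from this identity, and then match it by an explicit construction. For $T \in \cS_k$ with $n_j$ nodes of degree $j$, Lemma \ref{pre:lem:tree:deg:seq} gives $\sum_{j \ge 0} n_j = k$ and $\sum_{j \ge 1} j n_j = k-1$, so $n_0 = 1 + \sum_{j \ge 1} (j-1) n_j$. Substituting this into $\log \pi(T) = \sum_{j \ge 0} n_j \log p_j$ and using the identity $j \log \lambda_j = (j-1)\log p_0 + \log p_j$ yields
\[
\log \pi(T) = \log p_0 + \sum_{j \ge 1} j n_j \log \lambda_j.
\]
Since $\lambda_j^j = p_j p_0^{j-1} \le 1$ forces $\log \lambda_j \le 0$, and $\log \lambda_j \ge \log L$ by definition of $L$, combining with $\sum_{j \ge 1} j n_j = k - 1$ gives $\log \pi(T) \ge \log p_0 + (k-1)\log L$ uniformly over $T \in \cS_k$, hence $\pleastk^{1/k} \ge p_0^{1/k} L^{(k-1)/k} \to L$ (vacuously true when $L = 0$).

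For the matching upper bound, fix $\varepsilon > 0$ and pick $i \in \N$ with $p_i > 0$ and $\lambda_i \le L + \varepsilon$. Since $h = \gcd S$ for $S := \{j \ge 1 : p_j > 0\}$, there is a finite subset $F \subseteq S$ with $i \in F$ and $\gcd F = h$. The subgroup of $\Z/i\Z$ generated by $F \setminus \{i\}$ equals $h\Z/i\Z$, so because $h \mid k-1$, for $k$ sufficiently large on the valid subsequence one can write $k - 1 = a i + \sum_{i' \in F \setminus \{i\}} b_{i'} i'$ with $a \ge 0$ an integer and $\sum b_{i'}$ bounded uniformly in $k$. The degree sequence listing $a$ copies of $i$, then $b_{i'}$ copies of each $i' \in F \setminus \{i\}$, then $k - a - \sum b_{i'}$ zeros, satisfies Lemma \ref{pre:lem:tree:deg:seq} (positive degrees first), and defines a tree $T$ for which
\[
\log \pi(T) = \log p_0 + a i \log \lambda_i + \sum_{i' \in F \setminus \{i\}} b_{i'} i' \log \lambda_{i'} = (k-1) \log \lambda_i + O(1),
\]
since the correction sum has bounded index set and bounded coefficients. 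Hence $\limsup \pleastk^{1/k} \le \lambda_i \le L + \varepsilon$, and letting $\varepsilon \to 0$ completes the convergence.

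Finally, $L < 1$: if $\lambda_j = 1$ for some $j \in S$, then $p_0^{j-1} p_j = 1$ with $p_0, p_j \in (0,1]$ forces $\xi$ to be constant, violating $\V \xi > 0$; hence every $\lambda_j < 1$ and $L < 1$. The main obstacle is the divisibility subtlety in the upper bound: the naive single-degree construction would require $i \mid k-1$, which can fail even on the valid subsequence $\{k : h \mid k-1\}$. Using a finite subset $F \ni i$ of the support with $\gcd F = h$ together with a Bezout / coin-problem argument in $\Z/i\Z$ absorbs this mismatch into a uniformly bounded number of auxiliary nodes of degrees in $F \setminus \{i\}$, whose contribution to $\log \pi(T)$ is $O(1)$ and therefore disappears under the $1/k$ exponent.
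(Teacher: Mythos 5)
Your proposal is correct, and in outline it follows the paper: the lower bound is precisely the paper's computation --- rewriting \(\log\pi(T)\) through the degree-sequence constraints as \(\log p_0+\sum_{j\ge1}j n_j\log\lambda_j\) and bounding each \(\lambda_j\) below by \(L\) to get \(\pleastk\ge p_0L^{k-1}\), which is display \eqref{eq:fringe:pleast:lower} in the paper --- and the upper bound is an explicit size-\(k\) tree whose internal degrees are essentially all equal to a near-minimizing index \(i\). The genuine difference lies in how the divisibility obstruction is handled and in how much is proved. The paper routes its upper bound through Lemma \ref{family:lem:many:unlikely:tree}: assuming span \(1\), it absorbs the residue \((k-1)\bmod i\) with a single auxiliary degree \(j\) coprime to \(i\) chosen via the Chinese remainder theorem, and it additionally counts exponentially many such trees (a strengthening not needed for Theorem \ref{family:thm:pleast:and:L} itself, but used later for Theorem \ref{family:thm:well:behaved}). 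You instead pick a finite subset \(F\) of the support with \(\gcd F=h\) and solve \(k-1\equiv\sum_{i'\in F\setminus\{i\}}b_{i'}i'\pmod i\) with uniformly bounded nonnegative coefficients; this produces just one tree, which is all the theorem requires, and it is in fact more robust, since it covers arbitrary span and supports containing no single element coprime to \(i\) (e.g.\ support \(\{6,10,15\}\) with \(i=6\)), a case the paper's coprime-\(j\) device does not literally handle. Your direct argument for \(L<1\) (each \(\lambda_j<1\), since \(p_0^{j-1}p_j=1\) would force \(\xi\) to be degenerate) also works and bypasses the paper's bound via \(\pmax\). One small point worth stating explicitly: \(\pleastk\) is a minimum over trees of size \emph{at most} \(k\), so for \(|T|=m\le k\) you should note that the same identity gives \(\pi(T)\ge p_0L^{m-1}\ge p_0L^{k-1}\) because \(L\le1\); the paper elides the same step.
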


In fact, we have a stronger result for the upper bound of \(\pleastToOneByk\).
\begin{lemma}
    \label{family:lem:many:unlikely:tree}
    Assume Condition \ref{cond:offspring}.
    For all fixed \(i\) with \(p_i > 0\), there exist constants \(C_i > 1\)
    and \(C_{i}', C_{i}'', k(i) > 0\)
    such that
    for all \(k \ge k(i)\) with \(\p{|\gw|=k}>0\),
    there are at least \(k^{-C_i'} C_i^{k}\) trees \(T\) of size \(k\) with
    \[
        0 < \p{\gw = T} \le C_i''
        \left[
            p_0 \left(\frac {p_i}{p_0}\right)^{1/i}
        \right]^{k}
        .
    \]
\end{lemma}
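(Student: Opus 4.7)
The plan is to build a large family of plane trees of size $k$ with an ``almost $i$-ary'' degree composition: most internal nodes will have degree exactly $i$, a bounded number of correction nodes will take other degrees (purely in order to fix the residue $k-1 \pmod i$), and all remaining nodes will be leaves. I then count this family via the cycle lemma and bound the probability of each tree from its degree composition.

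First I would handle the divisibility. Since $\p{|\gw|=k}>0$ forces $h\mid k-1$, where $h=\gcd\{j:p_j>0\}$ is the span of $\xi$, and since $p_i>0$ forces $h\mid i$, I can pick once and for all finitely many indices $j_1,\ldots,j_s$ with $p_{j_\ell}>0$ and $\gcd(i,j_1,\ldots,j_s)=h$. A standard Sylvester--Frobenius argument applied to the numerical semigroup generated by $\{i,j_1,\ldots,j_s\}$ inside $h\Z$ then produces constants $k(i)$ and $B$, depending only on $\xi$, such that for every $k\geq k(i)$ with $h\mid k-1$ one can write
\[
k-1 \;=\; i\, n_i \;+\; \sum_{\ell=1}^{s} j_\ell\, b_\ell,
\qquad n_i\in\N_0,\quad b_\ell\in\N_0,\quad \sum_{\ell} b_\ell \leq B.
\]
In particular $n_i = (k-1)/i + O(1)$.

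Next I would count the trees with the prescribed degree composition. Consider plane trees whose degree multiset has $n_i$ copies of $i$, $b_\ell$ copies of $j_\ell$ for each $\ell$, and $n_0 := k - n_i - \sum_\ell b_\ell$ copies of $0$. Using Lemma \ref{pre:lem:tree:deg:seq} together with the cyclic-rotation argument behind Lemma \ref{pre:lem:tree:count:expectation} (i.e.\ the cycle lemma), the number of such trees equals
\[
\frac{1}{k}\binom{k}{n_0,\,n_i,\,b_1,\ldots,b_s}.
\]
Because $\sum_\ell b_\ell \le B = O(1)$, this multinomial differs from $\binom{k}{n_i}$ only by a polynomial factor in $k$. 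With $n_i = k/i + O(1)$ and $n_0 = k(i-1)/i + O(1)$, Stirling's formula yields a lower bound of the form $k^{-C_i'}\, C_i^{k}$, with
\[
C_i \;=\; \frac{i}{(i-1)^{(i-1)/i}}.
\]
For $i \ge 2$ one has $C_i > 1$, as $C_i > 1 \iff i^i > (i-1)^{i-1}$, which is immediate.

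Finally I would bound the probability of any tree $T$ in this family by
\[
\p{\gw=T} \;=\; p_0^{n_0}\, p_i^{n_i}\,\prod_{\ell=1}^{s} p_{j_\ell}^{b_\ell}.
\]
Since $\sum_\ell b_\ell \le B$, the last product is bounded by a constant depending only on $\xi$. Substituting $n_i = (k-1)/i + O(1)$ and $n_0 = k - n_i + O(1)$ gives
\(
p_0^{n_0} p_i^{n_i} = O(1)\cdot\bigl[p_0(p_i/p_0)^{1/i}\bigr]^k,
\)
so that $\p{\gw = T} \le C_i''\bigl[p_0(p_i/p_0)^{1/i}\bigr]^k$ for a suitable constant $C_i''$, completing the argument. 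The main obstacle in the plan is the first (Frobenius) step: one has to justify that a uniformly bounded palette of correction degrees really suffices for every admissible $k$, and that the resulting $n_i, b_\ell$ give a valid plane tree via the cycle lemma. The case $i=1$ is degenerate ($C_1 = 1$) and would be treated separately, either by a direct ``chain plus small branch'' construction or by appealing to the statement for some $i\geq 2$ that arises in the offspring distribution.
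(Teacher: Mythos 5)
Your proposal is correct and takes essentially the same route as the paper's proof: you build trees whose internal degrees are almost all equal to $i$, with a bounded number of correction degrees fixing the residue of $k-1$, count them exactly as $\frac{1}{k}\binom{k}{n_0,n_i,b_1,\ldots,b_s}$ and apply Stirling to obtain the same constant $C_i=\bigl[(1/i)^{1/i}(1-1/i)^{1-1/i}\bigr]^{-1}$, then bound each tree's probability through its degree composition; the only difference is that you fix the residue with a bounded Frobenius palette of degrees for general span $h$, whereas the paper reduces to $h=1$ and uses a single degree $j$ coprime to $i$ via the Chinese remainder theorem, so your arithmetic step is, if anything, slightly more complete. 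The $i=1$ degeneracy you flag is not a gap relative to the paper --- its construction yields $C_1=1$ as well and it is silent on this case --- but note that your fallback of ``appealing to the statement for some $i\ge 2$'' would only give bounds of the form $\bigl[p_0(p_i/p_0)^{1/i}\bigr]^k$ rather than $p_1^k$, so it does not by itself establish the $i=1$ claim.
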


\begin{proof}
    We assume that \(h\) (the span of \({\xi}\)) is \(1\).  The proof for \(h > 1\) is similar.
    Let \(j\) be the smallest positive integer such that \(j\) is coprime with \(i\) and \(p_j > 0\).
    (Such a \(j\) exists except for the trivial case that \(p_0+p_i=1\).)
    Let \(x = (k-1) \bmod i\).
    By the Chinese reminder theorem, there exists a smallest non-negative integer \(y\) such that
    \[
        \begin{cases}
            y \equiv x \pmod i, \\
            y \equiv 0 \pmod j.
        \end{cases}
    \]
    Note that \(y\) depends only on \(i\).
    Therefore, if \(k \ge k(i):=y+1\), we can choose
    \[
        n_{0} = k - n_{i} - n_{j}
        ,
        \qquad
        n_{j} = \frac{y}{j},
        \qquad
        n_{i} = \frac{k-1-y}{i},
    \]
    such that \(n_{0}, n_{i}, n_{j}\) are all non-negative integers with
    \[
        n_0 + n_{i} + n_{j} = k
        ,
        \qquad
        \text{and}
        \qquad
        i n_{i} + j n_{j} = k-1
        .
    \]
    Let \(\tsetk(n_0, n_i, n_j)\) be the set of plane trees of size \(k\) that has \(n_{0}\),
    \(n_i\) and \(n_j\) nodes with degree \(0\), \(i\) and \(j\) respectively.
    It is well-known that when the above two conditions hold, we have
    \begin{align*}
        |\tsetk(n_0, n_i, n_j)|
        :=
        \frac{1}{k} \binom{k}{n_0,n_{i}, n_{j}}
        =
        \frac{1}{k} \frac{k!}{n_0 ! n_{i} ! n_{j} !}.
    \end{align*}
    (See \cite[pp.\ 194]{Flajolet2009}.)
    Since \(i\) is a constant and \(y\) only depends on \(i\),
    there exists a constant \(C_i^{*}\) such that
    \[
        \left|n_{0} - k(1-1/i) \right| \le  C_i^{*}
        ,
        \qquad
        |n_{i} - k/i| \le C_i^{*}
        ,
        \qquad
        n_{j} \le C_i^{*}
        .
    \]
    Using these inequalities and Stirling's approximation \cite[pp.\ 407]{Flajolet2009}, it is easy to verify that
    there exists a constant \(C_{i}'>0\) such that
    \[
        |\tsetk(n_0, n_i, n_j)|
        \ge
        k^{-C_i'}
        \left( \left( \frac{1}{i} \right)^{1/i} \left( 1-\frac{1}{i} \right)^{1-1/i}
        \right)^{-k}
        := k^{-C_i'} C_i^{k}
        .
    \]
    And for every \(T \in \cS(n_0, n_i, n_j)\), we have
    \[
        \p{\gw = T}
        \le
        p_{i}^{n_{i}} p_{0}^{n_0}
        \le
        p_{i}^{-C_i^{*}} p_{0}^{- C_i^{*}}
        p_{i}^{k/i} p_{0}^{k(1-1/i)}
        :=
        C_{i}''
        \left[ p_0 \left( \frac{p_{i}}{p_0} \right)^{1/{i}}  \right]^{k}
        .
        \qedhere
        \]
\end{proof}

\begin{proof}[Proof of Theorem \ref{family:thm:pleast:and:L}]
    Let \(T\) be a tree with \(|T|= k\) and \(\p{\gw = T} > 0\), i.e., \(T \in \tsetplus{k}\).
    Let \(n_i\) be the number of nodes of degree \(i\) in \(T\).
    Note that if \(i > 0\) and \(i \notin \cI_{k-1}\), then \(n_i = 0\).
    Since by \eqref{pre:eq:deg:seq} the sum of the degrees in a preorder degree sequence equals the size of the tree
    minus one, we have
    \begin{equation*}
        n_0 + n_1 + \ldots n_{k-1} = k,
        \qquad
        \text{ and }
        \qquad
        n_1 + 2 n_2 \ldots + (k-1) n_{k-1} = k-1.
            \end{equation*}
    Using the convention that \(0^{0} = 1\), we have for \(k \ge 2\)
    \begin{align*}
        \p{\gw = T}
        & =
        p_{0}^{n_0} p_{1}^{n_{1}} \cdots p_{k-1}^{n_{k-1}}
        \\
        & =
        p_{0}^{n_0 + n_1 + \ldots + n_{k-1}}
        \left( \frac{p_1}{p_0} \right)^{n_1}
        \left( \frac{p_2}{p_0} \right)^{n_2}
        \cdots
        +
        \left( \frac{p_{k-1}}{p_0} \right)^{n_{k-1}}
        \\
        & =
        p_{0}^{k} \prod_{i \in \cI_{k-1}}
        \left[
            \left( \frac{p_i}{p_0} \right)^{\frac{1}{i}}
        \right]^{i n_{i}}
        \ge
        p_{0}^{k}
        \left[
            \min_{i \in \cI_{k-1}}
            \left( \frac{p_i}{p_0} \right)^{\frac{1}{i}}
            \right]^{\sum_{i=1}^{k-1} i n_{i}}
        \\
        &
        = p_0 L_{k}^{k-1}
        \ge p_0 L^{k-1}.  \numberthis \label{eq:fringe:pleast:lower}
    \end{align*}
    As a result
    \(
        \liminf_{k \to \infty} \pleastToOneByk \ge L.
    \)

\newcommand{\iup}{\alpha}
\newcommand{\idown}{\beta}

    To show the other way, let \(\varepsilon > 0\) be a constant, and let
    \(
    \iup = \min\{i:L_{i+1} \le L + \varepsilon\}.
    \)
    Therefore
    \( 0 < p_0 (p_\iup/p_0)^{1/\iup} \le L + \varepsilon  \).
    By Lemma \ref{family:lem:many:unlikely:tree}, there is at least one tree \(T\) of size
    \(k\) such that
    \begin{align*}
        \pleastk
        &
        \le
        \p{\gw = T}
        \le
        C_{\alpha}
        \left[ p_0 \left( \frac{p_{\iup}}{p_0} \right)^{\frac 1 {\iup}}  \right]^{k}
        \le
        C_{\alpha} (L+\varepsilon)^{k},
    \end{align*}
    where \(C_{\alpha} > 0\) is constant.
    Thus
    \(
        \limsup_{k \to \infty} \pleastToOneByk \le L+\varepsilon.
    \)
    Since \(\varepsilon\) is arbitrary,
    we have
    \(\limsup_{k \to \infty} \pleastToOneByk \le L\).

    Recall that \(\pmax := \max_{i \ge 0} p_i < 1\). For all trees \(T\) with size \(k\), we have \(\p{\gw = T} \le
    \pmax^{k},\) i.e., \(\pleastToOneByk \le \pmax\). Thus \(L = \lim_{k \to \infty} \pleastToOneByk \le
    \pmax < 1\).
\end{proof}

\subsubsection{When \(L > 0\)}

If \(L > 0\), then by Theorem \ref{family:thm:pleast:and:L},
\(\log (1/\pleastToOneByk) \sim  \log(1/L) k = \log(1/L)k(\log k)^0\).
Thus we can apply Theorem \ref{family:thm:fringe:all:appear} with \(\gamma = \log(1/L)\),
\(\alpha = 1\) and \(\beta=0\) to get
\[\frac{\pSetMaxSize}{\log(n)} \inprob \frac{1}{\log\left(1/L \right)}.\]
The following Lemma computes \(L\) for some well-known Galton-Watson trees.
See \citet[sec.\ 10]{janson2012simply} for more about these trees.

\begin{lemma}
    \label{family:lem:L:of:4}
        (i) Full binary tree: If \(\xi \eql 2 \Be(1/2)\), then \(L = 1/2\).
        (ii) Motzkin tree: If \(p_0 = p_1 = p_2 = 1/3\), then \(L = 1/3\).
        (iii) \(d\)-ary tree: If \(\xi \eql \Bi(d,1/d)\) for \(d \ge 2\), then \(L = (d-1)^{d-1}/d^d\).
        (iv) Plane tree: If \(\xi \eql \Ge(1/2)\), then \(L = 1/4\).
\end{lemma}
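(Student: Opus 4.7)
The plan is to compute \(L := \inf\{p_0 (p_i/p_0)^{1/i} : i \in \N,\ p_i > 0\}\) directly for each of the four distributions and then invoke Theorem \ref{family:thm:pleast:and:L}. In every case the probability mass function is explicit, so three of the four cases (i, ii, iv) reduce to a one-line substitution, while case (iii) requires a short algebraic simplification followed by a minimization over \(i \in \{1, \ldots, d\}\).

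For (i), only \(i = 2\) has \(p_i > 0\), and \(p_0 (p_2/p_0)^{1/2} = (1/2) \cdot 1 = 1/2\). For (ii), the contributing indices are \(i \in \{1, 2\}\): the \(i = 1\) term equals \(p_1 = 1/3\) and the \(i = 2\) term equals \(p_0 (p_2/p_0)^{1/2} = 1/3\), giving \(L = 1/3\). For (iv), I would note that \(p_0 = 1/2\) and \(p_i/p_0 = 2^{-i}\), so \(p_0 (p_i/p_0)^{1/i} = (1/2)(1/2) = 1/4\) for every \(i \geq 1\); hence \(L = 1/4\).

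For (iii), I would substitute \(p_i = \binom{d}{i}(1/d)^i((d-1)/d)^{d-i}\). Using \(p_0 = ((d-1)/d)^d\) and \(p_i/p_0 = \binom{d}{i}/(d-1)^i\), a short simplification yields
\[
p_0 \left(\frac{p_i}{p_0}\right)^{1/i} = \frac{(d-1)^{d-1}}{d^d} \, \binom{d}{i}^{1/i}.
\]
It then remains to observe that \(\binom{d}{i}^{1/i} \geq 1\) for all \(i \in \{1, \ldots, d\}\), with equality at \(i = d\) since \(\binom{d}{d} = 1\). This identifies \(i = d\) as the minimizer and yields \(L = (d-1)^{d-1}/d^d\).

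The only genuine piece of work is the algebraic simplification in case (iii); everything else is direct substitution. Even there, once the factor \(\binom{d}{i}^{1/i}\) has been isolated, the minimization is immediate, so the proof as a whole is essentially mechanical.
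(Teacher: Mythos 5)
Your proposal is correct and follows essentially the same route as the paper: direct substitution of the explicit offspring probabilities into the definition \(L = \inf\{p_0(p_i/p_0)^{1/i} : p_i > 0\}\), with the paper writing out cases (i) and (iv) and dismissing (ii) and (iii) as similar simple calculations, which your binomial simplification \(p_0(p_i/p_0)^{1/i} = \frac{(d-1)^{d-1}}{d^d}\binom{d}{i}^{1/i}\) with minimizer \(i=d\) carries out correctly. The appeal to Theorem \ref{family:thm:pleast:and:L} is unnecessary (the lemma only asserts the value of \(L\), which is defined directly from the \(p_i\)) but harmless.
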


\begin{proof}

    (i): If \(\xi \sim 2 \Be(1/2)\), then \(p_0 = 1/2\), \(p_2=1/2\) and \(p_i = 0\) for \(i \notin
    \{0,2\}\).
    Thus for \(k \ge 3\), we have
    \[
        L_{k} = \min_{i:i< k, p_i > 0} p_0 \left( \frac{p_i}{p_0} \right)^{1/i}
        = p_0 \left( \frac{p_2}{p_0} \right)^{1/2} = 1/2.
        \]
        Therefore \(L = \lim_{k \to \infty} L_{k} = 1/2\).
    (ii) and (iii) follow from similar simple calculations.

    (iv): For all \(i \ge 1\), we have
    \[
        p_{0} \left( \frac{p_{i}}{p_{0}} \right)^{1/i}
        = \frac{1}{2} \left( \frac{1}{2^{i}} \right)^{1/i} = 1/4.
    \]
    Therefore \(L_{k} = 1/4\) for all \(k \ge 1\), and \(L = 1/4\).
\end{proof}


Define
\[
    \kappa =
    \begin{cases}
        \min \{i \in \N: L_{i+1} = L\} & \text{if \(L_j = L\) for some \(j\)}, \\
        \infty & \text{otherwise}.
    \end{cases}
\]
If \(\kappa < \infty\), then we call the Galton-Watson tree \emph{well-behaved}.
Examples of such trees include those for which \(\xi\) is bounded,
and those for which \(\xi\) has a polynomial or sub-exponential tail.
The case \(\xi \eql \Ge(1/2)\) is also well-behaved.
Thus the four types of Galton-Watson trees in Lemma \ref{family:lem:L:of:4} are
well-behaved. The following theorem gives better thresholds than Theorem
\ref{family:thm:fringe:all:appear}.

\begin{theorem}
    \label{family:thm:well:behaved}
    Assume Condition \ref{cond:offspring} and let the Galton-Watson tree be well-behaved.
    Then for all constants \(\delta > 0\), we have:
    \begin{enumerate}[label=(\roman*)]
        \item If \(k_n \le (\log n - (1+\delta)\log \log n)/\log \frac{1}{L}\), then whp \(\gwn\) contains
            all trees in \(\tsetpluskn\) as fringe subtrees.
        \item If \(k_n \ge (\log n - (1-\delta)\log \log n)/\log \frac{1}{L}\),
            then whp \(\gwn\) does not contain
            all trees in \(\tsetpluskn\) as fringe subtrees.
    \end{enumerate}
    Thus as \(n \to \infty\), we have
    \[
        \frac{K_n \log(1/L)- \log n}{\log \log n} \inprob -1.
    \]
\end{theorem}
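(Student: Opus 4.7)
The plan is to exploit the fact that for well-behaved offspring distributions $\pleastk$ is of exactly the order $L^k$: the lower bound \eqref{eq:fringe:pleast:lower} gives $\pleastk\ge p_0 L^{k-1}$, and once $k\ge\kappa+1$ this matches (up to constants) the upper bound produced by Lemma~\ref{family:lem:many:unlikely:tree} with $i=\kappa$. Part (i) will then follow from a direct application of Lemma~\ref{faimly:lem:all:appear:condition}, while part (ii) will be a second-moment coupon-collector argument on the exponentially large family of unlikely trees.

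For part (i), the hypothesis $k_n\log(1/L)\le \log n-(1+\delta)\log\log n$ rearranges to $n L^{k_n}\ge(\log n)^{1+\delta}$, so $n\pleast_{k_n}\ge(p_0/L)(\log n)^{1+\delta}$ by the lower bound just mentioned. The same hypothesis forces $k_n=O(\log n)$, so $n\pleast_{k_n}/k_n\to\infty$, and Lemma~\ref{faimly:lem:all:appear:condition} yields the claim.

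For part (ii) the least-likely tree alone is not enough, as $n\pleast_{k_n}\ge(p_0/L)(\log n)^{1-\delta}\to\infty$ in this range. I would instead pick an index $i\ge 2$ satisfying $p_0(p_i/p_0)^{1/i}=L$ (such an $i$ exists for every example of Lemma~\ref{family:lem:L:of:4}) and apply Lemma~\ref{family:lem:many:unlikely:tree} to obtain a family $\cA_k\subseteq\cS_k$ with $|\cA_k|\ge k^{-C'}C^k$, $C>1$, and $\pi(T)\le C'' L^k$ for every $T\in\cA_k$. Condition on the whp event that $\gwn$ has $y_n:=\lceil\tfrac12 n\pi(\cS_{k_n})\rceil$ fringe subtrees of size $k_n$; by the subtree-switching coupling of Lemma~\ref{faimly:lem:all:appear:condition} these subtrees can be taken to be i.i.d.\ copies of $\gw_{k_n}$. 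Let $Z$ count the trees of $\cA_{k_n}$ that fail to appear among them. Using $\p{\gw_{k_n}=T}=O(k_n^{3/2}L^{k_n})$ and $n L^{k_n}\le(\log n)^{1-\delta}$, a direct calculation yields
\[
    \E{Z}\ge |\cA_{k_n}|\exp\bigl(-O((\log n)^{1-\delta})\bigr)=\exp\bigl(\Omega(\log n)\bigr)\to\infty,
\]
while the negative-correlation estimate from the proof of Lemma~\ref{family:lem:coupon} gives $\V{Z}\le\E{Z}$, so Chebyshev yields $\p{Z=0}\le 1/\E{Z}\to 0$. Hence whp some tree of $\tsetpluskn$ is missing from $\gwn$; the final convergence statement $\bigl(K_n\log(1/L)-\log n\bigr)/\log\log n \inprob -1$ follows on letting $\delta\downarrow 0$.

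The main obstacle is the corner case $\kappa=1$ in which $L=p_1$ is realised only at $i=1$: then Lemma~\ref{family:lem:many:unlikely:tree} with $i=1$ produces only polynomially many unlikely trees and the lower bound on $\E{Z}$ collapses. Circumventing this would require either a refinement of Lemma~\ref{family:lem:many:unlikely:tree} exploiting near-chain trees with a bounded number of higher-degree vertices, or an additional hypothesis ensuring that some $i\ge 2$ also attains the infimum $L$ (which does hold for all the standard examples in Lemma~\ref{family:lem:L:of:4}).
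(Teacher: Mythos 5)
You follow the paper's proof in all essentials: part (i) is identical (the bound \(\pleastk \ge p_0 L^{k-1}\) from \eqref{eq:fringe:pleast:lower} together with Lemma \ref{faimly:lem:all:appear:condition}), and part (ii) uses the same machinery --- condition on the number of size-\(k_n\) fringe subtrees, replace them by i.i.d.\ copies of \(\gw_{k_n}\), and run the second-moment half of Lemma \ref{family:lem:coupon} over the exponentially many unlikely shapes produced by Lemma \ref{family:lem:many:unlikely:tree}. Two cosmetic repairs: in the negative direction the conditioning event must be an \emph{upper} bound on the count, e.g.\ the whp event \(\fringeSkn \le \lfloor\tfrac32 n\p{|\gw|=k_n}\rfloor\) supplied by Theorem \ref{intro:thm:fringe:set:count} (conditioning on ``exactly \(\lceil\tfrac12 n\pi(\cS_{k_n})\rceil\) subtrees'' is not a whp event, and monotonicity points the other way); and for \(k_n\) much larger than \(\log n\) one simply falls back on \(n\pleast_{k_n}\to 0\), as the paper does. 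Neither changes your estimate of \(\E{Z}\).

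The only substantive divergence is the index fed into Lemma \ref{family:lem:many:unlikely:tree}: the paper takes \(i=\ileast\) and quotes the constant \(C_\ileast>1\) from that lemma's statement, so formally it has no corner case, whereas you insist on some \(i\ge 2\) with \(p_0(p_i/p_0)^{1/i}=L\). Your caution is justified: the proof of Lemma \ref{family:lem:many:unlikely:tree} delivers \(C_i>1\) only for \(i\ge 2\); for \(i=1\) it produces essentially the single chain profile and \(C_1=1\), so when \(\ileast=1\) the paper's argument hinges on exactly the step you decline to use. Your substitute covers every distribution in Lemma \ref{family:lem:L:of:4}, including \(\Ge(1/2)\), where \(L\) is attained at every \(i\). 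In the genuinely bad case, where \(L=p_1\) is attained only at \(i=1\) --- e.g.\ \(p_0=p_2=2/5\), \(p_1=1/5\), which is critical and well-behaved with \(\ileast=1\) --- a size-\(k\) tree with \(m\) binary nodes has probability \(2\cdot 4^{m}5^{-k}\) and there are only about \(k^{2m}\) such shapes, so only polynomially many trees have probability \(O(L^k)\); a union bound (using the same conditional i.i.d.\ coupling and Theorem \ref{intro:thm:fringe:set:count}) then shows that at the threshold of part (ii) all trees of \(\tsetpluskn\) still appear whp, and the true second-order correction there is of order \(\log\log\log n\) rather than \(\log\log n\). So the obstacle you flagged is not a gap that a cleverer refinement of your argument could close: part (ii) as stated fails in that case, and the additional hypothesis you propose (some \(i\ge 2\) attains \(L\)) --- equivalently, a strengthening of the definition of well-behaved --- is actually needed.
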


The main idea is that when \(\ileast < \infty\), there are exponentially many trees of size
\(k\) that have small probability to appear as fringe subtrees in \(\gwn\).
Then we can use Lemma \ref{family:lem:coupon} (the coupon collector) to find the sufficient condition for one of them to not to
appear whp.

\begin{proof}    (i): Write \(m = \log n\)
    and \(k = k_n\).
    Using \eqref{eq:fringe:pleast:lower},
    it is easy to verify that in this case \(n \pleastk/k \to \infty\).
    Thus (i) follows from  Lemma \ref{faimly:lem:all:appear:condition}.

    (ii): The proof is similar to the one of Lemma \ref{faimly:lem:all:appear:condition}.
    As in that proof, we can assume that \(k = O(\log n)\).
    Thus by Theorem \ref{intro:thm:fringe:set:count}, whp \(\fringeSk \le y_n:=
    \floor{\frac{3}{2} n \p{|\gw|=k}}\).
        Let \(A_n\) be the event that \(\gwn\) contains all possible trees of size \(k\) as fringe
    subtrees.
    Let \(B_n(i)\) be the event that \(\fringeSk = i\) for some \(i \le y_n\).
    Then
    \begin{align*}
        \p{A_n}
        &
        \le
        \p{\fringeSk > y_n} +
        \sum_{i \le y_n} \p{A_n | B_n(i)} \p{B_{n}(i)}
        \\
        &
        \le
        o(1)
        +
        \p{A_n | B_n(y_n)} \sum_{i \le y_n} \p{B_{n}(i)},
        \\
        &
        \le
        o(1) +  \p{A_n | B_n(y_n)}.
    \end{align*}
    Thus it suffices to prove that \(\p{A_n|B_n(y_n)} \to 0\).

    Using the same coupling as in the proof of Lemma \ref{faimly:lem:all:appear:condition}, we have
    \(\p{A_n|B_n(y_n)}\) equals the probability that \(y_n\) independent copies of \(\gw_k\) do
    not contains all trees in \(\tsetplus{k}\). It follows from Lemma \ref{family:lem:coupon} (the coupon collector)
    that \(\p{A_n|B_n(y_n)} \to 0\) if \(\sum_{T \in \tsetplus{k}} (1-\p{\gw_k=T})^{y_n}\) goes to infinity.

    By definition of \(\ileast\), we have \(L = p_0 (p_{\ileast}/p_0)^{1/\ileast}\).
    It follows from Lemma \ref{family:lem:many:unlikely:tree} that there
    exists constants \(C_\ileast> 1\) and \(C_{\ileast}', C_{\ileast}'' >0\)
    such that there
    are at least \(k^{-C_{\ileast}'}
    C_{\ileast}^k\) trees \(T\) in \(\tsetplus{k}\) with
    \begin{align*}
        \p{\gw_k = T} = \frac{\p{\gw = T}}{\p{|\gw|=k}}
        \le
        C_{\ileast}''
        \frac{(p_0 (p_{\ileast}/p_0)^{1/\ileast})^{k}}{\p{|\gw|=k}}
        =
        \frac{C_{\ileast}''L^k}{\p{|\gw|=k}}
        .
    \end{align*}
    Therefore
    \begin{align*}
    \sum_{T \in \tsetplus{k}} (1-\p{\gw_k=T})^{y_n}
    &
    \ge
    k^{-C_{\ileast}'} C_{\ileast}^{k} \left( 1-\frac{C_{\ileast}''L^k}{\p{|\gw|=k}} \right)^{y_n}.
    \end{align*}
    Since \(L<1\) and \(\p{|\gw|=k}=\Theta(k^{-3/2})\), we have
    \(L^k/\p{|\gw|=k} = o(1)\).
    Thus for \(k\) large enough, the logarithm of the above is
    \begin{align*}
        & k \log(C_{\kappa})
        - C_{\kappa}' \log(k)
        + y_n \log\left( 1-\frac{C_{\kappa}'' L^{k}}{\p{|\gw|=k}} \right)
        \\
        &
        \ge
        \frac{1}{2}k \log(C_{\kappa})
        - y_n \frac{C_{\kappa}'' L^{k}}{\p{|\gw|=k}}
        \\
        &
        \ge
        \frac{1}{2}k \log(C_{\kappa})
        - \frac{3}{2} n C_{\kappa}'' L^{k}
        =
        \frac{1}{2}k \log(C_{\kappa})
        - O(nL^{k}).
    \end{align*}
    By our assumptions, \(k = \Omega(\log n)\) and \(L^{k} \le (\log n)^{1-\delta}/n\).
    Since \(C_{\kappa}>0\), we have
    \[
        \frac{1}{2} k \log(C_{\kappa})-O(n L^{k})
        \ge \Omega(\log n) - \bigO{n \frac{(\log n)^{1-\delta}}{n}}
        \to \infty,
    \]
    which implies \(\p{A_n|B_n(y_n)} \to 0\).
\end{proof}

\begin{myRemark*}
    If \(L > 0\) and \(\ileast = \infty\), then Theorem \ref{family:thm:fringe:all:appear} shows that
    \(K_n/\log(n) \inprob 1/\log(1/L)\).  But the second order term of \(K_n\) is sensitive to small
    modifications of the offspring distribution, which makes it slightly more challenging to analyze
    the second order term.
\end{myRemark*}

\subsubsection{When \(L = 0\)}

It is clear that \(L=0\) if and only if \(\xi\) has infinite support and \(\lim\inf_{i \to \infty} p_0
(p_i/p_0)^{1/i} = 0\),
which implies \(\limsup_{i \to \infty} \log(1/p_{i})/i = \infty\), along the subsequence with \(p_i > 0\).
If in addition we have \(p_i > 0\) for all \(i \ge 0\) and
\(\log (1/p_i) \sim f(i)\) for some \(f:[0,\infty)\to[0,\infty)\) with \(f(i)/i \uparrow \infty\),
then we say that \(\xi\) has an \emph{\(f\)-super-exponential tail}.
We have the following threshold for
Galton-Watson trees with such a property.
\begin{theorem}
    \label{family:thm:super:exp}
    Assume Condition \ref{cond:offspring} and that \(\xi\) has an \(f\)-super-exponential tail.
    Let \(f^{-1}\) denote the inverse of \(f\).
    Then for all constants \(\delta > 0\), we have
    \begin{enumerate}
        \item If \(k_n \le f^{-1}( (1-\delta) \log n)+1\), then whp \(\gwn\) contains
            all trees in \(\tsetpluskn\) as fringe subtrees.
        \item If \(k_n \ge f^{-1}( (1+\delta) \log n)+1\),
            then whp \(\gwn\) does not contain
            all trees in \(\tsetpluskn\) as fringe subtrees.
    \end{enumerate}
    Therefore, \[\frac{K_n}{f^{-1}(\log n)} \inprob 1.\]
\end{theorem}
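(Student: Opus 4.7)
\emph{Strategy.}  My plan is to follow the template of Theorem~\ref{family:thm:well:behaved}: pin down \(\pleastk\) up to a \((1+o(1))\) factor in the exponent, and then invoke Lemma~\ref{faimly:lem:all:appear:condition} for (1) and Theorem~\ref{intro:thm:fringe:count}(i) for (2).  The new feature is that when \(L = 0\) I can no longer appeal to Lemma~\ref{family:lem:many:unlikely:tree} with a fixed \(\ileast\); instead I will exhibit a single explicit extremal tree, the \emph{star} \(\tstar_k\) (a root with \(k-1\) leaves).  Writing \(g(i) := \log(1/p_i) \sim f(i)\), we have
\[
    \log(1/\p{\gw = \tstar_k}) = g(k-1) + (k-1) g(0) \sim f(k-1),
\]
since \(f(k-1)/(k-1) \to \infty\) forces the second summand to be \(o(f(k-1))\).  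Thus \(\log(1/\pleastk) \ge (1-o(1)) f(k-1)\).

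\emph{Main step.}  The key claim is the matching inequality \(\log(1/\pleastk) \le (1+o(1)) f(k-1)\), i.e., no tree of size \(k\) can be much more unlikely than \(\tstar_k\).  For \(T \in \tsetpluskn\) with degree sequence \((n_i)\), the constraints \(\sum_i n_i = k\) and \(\sum_i i n_i = k-1\) from Lemma~\ref{pre:lem:tree:deg:seq} give
\[
    \log(1/\p{\gw = T}) = n_0 g(0) + \sum_{i \ge 1} i n_i \cdot \frac{g(i)}{i}
    \le k\, g(0) + (k-1) M_k,
\]
where \(M_k := \max_{1 \le i \le k-1} g(i)/i\).  Fix \(\varepsilon > 0\).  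Because \(g(i) \sim f(i)\) and \(f(i)/i\) is non-decreasing and unbounded, there is an \(I\) such that \(g(i)/i \le (1+\varepsilon) f(i)/i \le (1+\varepsilon) f(k-1)/(k-1)\) for all \(I \le i \le k-1\); the finitely many remaining values \(g(i)/i\) with \(1 \le i < I\) are bounded by a constant and are therefore eventually dominated by \(f(k-1)/(k-1) \to \infty\).  Hence \(M_k \le (1+\varepsilon) f(k-1)/(k-1)\) for large \(k\), and since \(k g(0) = o(f(k-1))\) we conclude \(\log(1/\pleastk) \le (1+o(1)) f(k-1)\).  Combined with the first step, \(\log(1/\pleastk) \sim f(k-1)\).

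\emph{Conclusion.}  For part (1), if \(k_n \le f^{-1}((1-\delta)\log n)+1\), then \(f(k_n - 1) \le (1-\delta)\log n\), so \(\log(1/\pleast_{k_n}) \le (1-\delta+o(1))\log n\) and \(n \pleast_{k_n}/k_n \ge n^{\delta - o(1)}/k_n \to \infty\), where we used \(k_n \le f^{-1}(\log n) + 1 = o(\log n)\) because \(f^{-1}(m)/m \to 0\) (a direct consequence of \(f(i)/i \to \infty\)).  Lemma~\ref{faimly:lem:all:appear:condition} then gives (1).  For (2), if \(k_n \ge f^{-1}((1+\delta)\log n)+1\), then \(\log(1/\p{\gw = \tstar_{k_n}}) \ge (1+\delta-o(1)) \log n\), hence \(n\, \pi(\tstar_{k_n}) \to 0\); Theorem~\ref{intro:thm:fringe:count}(i) applied to \(T_n := \tstar_{k_n}\) shows \(\treecount_{\tstar_{k_n}}(\gwn) = 0\) whp, so the star of size \(k_n\) is missing from \(\gwn\) and not every tree in \(\tsetpluskn\) appears.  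The main obstacle is the uniform control of \(M_k\) in the middle step: since \(g\) need not be monotone, one must split the range of \(i\) into a bounded initial segment and a tail where \(g(i) \sim f(i)\) is close to its asymptotic envelope, so as to realize that concentrating all \(k-1\) edges on a single degree-\((k-1)\) node is asymptotically optimal.
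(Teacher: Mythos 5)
Your proposal is correct and takes essentially the same route as the paper: your bound on \(M_k\) (splitting \(i\) into a bounded initial segment and a tail where \(g(i)\le(1+\varepsilon)f(i)\), then using the monotonicity of \(f(i)/i\)) is exactly the paper's estimate of \(L_k\) combined with \eqref{eq:fringe:pleast:lower}, after which part (1) follows from Lemma~\ref{faimly:lem:all:appear:condition} and part (2) from applying Theorem~\ref{intro:thm:fringe:count}(i) to the star \(\tstar_{k_n-1}\), just as in the paper. The only addition is your matching upper bound \(\log(1/\pleastk)\ge(1-o(1))f(k-1)\) via the star, which is harmless but not needed for the argument.
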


\begin{proof}
    (i): Let \(k = k_n\).
    Choose \(\varepsilon > 0\) such that \( (1-\delta)(1+\varepsilon) < (1-\delta/2)\).
    Since \(\log(1/p_i) \sim f(i)\), there exists an integer \(i(\varepsilon)\) such that
    for all \(i > i(\varepsilon)\),
    \[
        \log p_i \ge -(1+\varepsilon/2) f(i).
    \]
    Let \(w_i := \log p_0 (p_i/p_0)^{1/i}\).
    We have as \(k \to \infty\),
    \begin{align*}
        \min_{i(\varepsilon) < i < k}
        w_i
        &
        =
        \min_{i(\varepsilon) < i < k}
        \left( 1-\frac{1}{i} \right) \log(p_0) + \frac{\log p_i}{i}
        \\
        &
        \ge
        \log(p_0)
        -
        \max_{i(\varepsilon) < i < k}
        \frac{(1+\varepsilon/2) f(i)}{i}
        \\
        &
        = \log(p_0) - \frac{(1+\varepsilon/2) f(k-1)}{k-1}
        \to - \infty
        ,
    \end{align*}
    where we use that \(f(i)/i \uparrow \infty\).
    Since \(\min_{1 \le i \le i(\varepsilon)} w_i\) is a constant, we have for large \(k\),
    \begin{align*}
        \log L_k := \min_{1 \le i < k} w_i \ge
        \log(p_0)
        - \frac{(1+\varepsilon/2) f(k-1)}{k-1}.
    \end{align*}
    It follows from \eqref{eq:fringe:pleast:lower} that for \(k\) large enough,
    \begin{align*}
        \log \pleastk
        &
        \ge \log (p_0 L_k^{k-1})
        \\
        &
        \ge \log(p_0) + (k-1) \log p_0 - (1+\varepsilon/2) f(k-1).
        \\
        &
        \ge
        - (1+\varepsilon) f(k-1),
    \end{align*}
    where the last step uses \(f(k)/k \uparrow \infty\).

    The assumption \(k-1 \le f^{-1}( (1-\delta) \log n)\) implies that
    \(f(k-1) \le (1-\delta) \log n\) and \(k = O(\log n)\). Thus
    \[
        \log \pleastk
        \ge -(1+\varepsilon) (1-\delta) \log n
        \ge -(1-\delta/2) \log n.
    \]
    Thus \(n \pleastk \ge n^{\delta/2}\).
    We have \( n \pleastk/k \to \infty\).
    It follows from Lemma \ref{faimly:lem:all:appear:condition} that \(\gwn\) contains all possible trees of
    size at most \(k\) as fringe subtree whp.

    (ii):
    Let \(\tstar_{k-1}\) be the tree in which one node has degree \(k-1\) and all other nodes are
    leaves. Computations similar to above show that if \(k-1 \ge f^{-1}( (1+\delta) \log n)\),
    then \(n \pi(\tstar_{k-1}) \to 0\). Therefore \(\gwn\) does not contain \(\tstar_{k-1}\) whp.
\end{proof}

\begin{example*}[The discrete Gaussian distribution]
    \label{family:ge:i:super:exp}
    When \(p_i = c e^{-c' i^2}\) for some appropriate positive normalization constants \(c\) and \(c'\),
    we have \(L=0\), and Theorem \ref{family:thm:super:exp} applies.
    Then
    \[\frac {K_n}{\sqrt{\log(n)}} \inprob \frac{1}{\sqrt{c'}},\]
    as \(n \to \infty\).
\end{example*}

\begin{example*}[The Cayley trees]
A better example is the Galton-Watson tree with offspring distribution \(\xi \eql
\Po(1)\), i.e., the Cayley tree.
It has \(p_i = e^{-1}/i!\) and \(\log(1/p_i) \sim i \log(i)\).
It is easy to see that 
\[
    \frac{\pSetMaxSize \log \log n}{\log n} \inprob 1.
\]
Using \eqref{eq:fringe:pleast:lower} it is not difficult to verify that the
tail drops so fast that the least possible tree of size \(k\) is \(\tstar_{k-1}\).
This is a special case of the following general observation.
\end{example*}

\begin{lemma}
    \label{family:lem:Cayley:is:super:exp}
    Assume Condition \ref{cond:offspring}.
    If \(p_i > 0\) for all \(i \ge 0\) and \(
    p_{i}^{1/i} \downarrow 0\), then for \(k\) large enough,
    \(
        \pleastk
        =
        \p{\gw = \tstar_{k-1}}
        =
        p_{0}^{k-1} p_{k-1}
        .
    \)
    In particular, this is true for \(\xi \eql \Po(1)\). In the latter case we have
    \[
        \log \pleastk = \log(p_0^{k-1}p_{k-1}) = - k \log k (1 + O(1/k)).
    \]
\end{lemma}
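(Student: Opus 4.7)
The plan is to show that among all trees of size $k$, the star $\tstar_{k-1}$ minimizes the probability under $\gw$, and then to plug in the Poisson offspring distribution.

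For the first assertion, I would fix a possible tree $T$ of size $k$ and let $n_i$ denote the number of its degree-$i$ vertices. The identities $\sum_{i \ge 0} n_i = k$ and $\sum_{i \ge 0} i n_i = k-1$ from Lemma \ref{pre:lem:tree:deg:seq} give, writing $N := \sum_{i \ge 1} n_i$ for the number of internal vertices,
\begin{align*}
\frac{\p{\gw = T}}{\p{\gw = \tstar_{k-1}}}
= \frac{p_0^{k-N}\prod_{i \ge 1} p_i^{n_i}}{p_0^{k-1} p_{k-1}}
= p_0^{1-N} \cdot \frac{\prod_{i \ge 1} p_i^{n_i}}{p_{k-1}}.
\end{align*}
Setting $q_i := p_i^{1/i}$ so that $\log p_i = i\log q_i$, and using $\sum_{i \ge 1} i n_i = k-1$, I would rewrite the logarithm of the ratio as
\begin{align*}
\log \frac{\p{\gw = T}}{\p{\gw = \tstar_{k-1}}}
= (1-N)\log p_0 + \sum_{i \ge 1} i n_i \bigl(\log q_i - \log q_{k-1}\bigr).
\end{align*}

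Both terms on the right are non-negative. For the first, any tree of size $k \ge 2$ has $N \ge 1$ and $p_0 \in (0,1)$, so $(1-N)\log p_0 \ge 0$. For the second, the hypothesis $p_i^{1/i} \downarrow 0$ yields $q_i \ge q_{k-1}$ for every $i \in \{1,\ldots,k-1\}$ provided $k-1$ is beyond the index from which the monotonicity has taken hold; every summand is then non-negative. The main subtlety is precisely this monotonicity step, which is why the conclusion is stated only for $k$ large enough. Equality in the ratio forces $N=1$ (from the first term) and then $\sum i n_i = k-1$ forces the unique internal vertex to have degree $k-1$, so $T = \tstar_{k-1}$. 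This gives $\pleastk = p_0^{k-1}p_{k-1}$.

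For the Cayley case $\xi \eql \Po(1)$, substituting $p_0 = e^{-1}$ and $p_{k-1} = e^{-1}/(k-1)!$ gives
\begin{align*}
\log \pleastk = (k-1)\log p_0 + \log p_{k-1} = -k - \log((k-1)!).
\end{align*}
Stirling's formula yields $\log((k-1)!) = (k-1)\log(k-1) - (k-1) + O(\log k) = k\log k - k + O(\log k)$, and hence $\log \pleastk = -k\log k + O(\log k) = -k\log k\bigl(1 + O(1/k)\bigr)$, which is the stated asymptotics.
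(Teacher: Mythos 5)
Your proof is correct in substance and is essentially the same computation that underlies the paper's general lower bound \eqref{eq:fringe:pleast:lower}: comparing \(\p{\gw=T}\) with \(p_0^{k-1}p_{k-1}\) via the degree counts \(n_i\) and the identities of Lemma \ref{pre:lem:tree:deg:seq} amounts to showing that the minimum defining \(L_k\) is attained at \(i=k-1\), whence \(p_0L_k^{k-1}=p_0^{k-1}p_{k-1}=\p{\gw=\tstar_{k-1}}\). Two small points should be tightened. First, your justification of \(q_i \ge q_{k-1}\) for every \(i\in\{1,\ldots,k-1\}\) (with \(q_i:=p_i^{1/i}\)) only covers indices beyond the index \(i_0\) from which the monotonicity of \(q_i\) has set in; for the finitely many indices \(i<i_0\) you additionally need that \(q_{k-1}\to 0\) while \(\min_{i<i_0}q_i>0\) (here \(p_i>0\) for all \(i\) is used), so that \(q_{k-1}\) eventually falls below all of them --- this is precisely where ``\(k\) large enough'' enters, and it is not a vacuous concern, since for \(\xi \eql \Po(1)\) the sequence is not monotone from the start: \(q_1=e^{-1}<\bigl(e^{-1}/2\bigr)^{1/2}=q_2\). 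Second, the ``in particular'' claim requires verifying that \(\Po(1)\) satisfies the hypothesis, i.e.\ that \(\bigl(e^{-1}/i!\bigr)^{1/i}\) tends to \(0\) and is eventually decreasing (a one-line Stirling check, since \(\log q_i=-\log i+1+O(\log i/i)\)); you apply the first part to the Poisson case without this check. With these two additions the argument is complete, and your Stirling asymptotics \(\log\pleastk=-k\log k\,(1+O(1/k))\) are correct.
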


It follows from Theorem \ref{family:thm:fringe:all:appear} that for \(\xi \eql \Po(1)\),
\[
    K_n \frac{\log \log n}{\log n} \inprob 1,
\]
as \(n \to \infty\).
This matches the result given by Theorem \ref{family:thm:super:exp}.
However, we can be more precise by applying the following theorem with \(\gamma = 1\).

\begin{theorem}
    \label{family:thm:better:Cayley}
    Assume Condition \ref{cond:offspring}.
    Also assume that
    \[
        \log (1/\pleast_{i}) = \gamma (\log i )(i + O(1))
        ,
    \]
    where \(\gamma > 0\) is a constant.
    Define
    \[m = \log n, \qquad m_1 = \log(m/\gamma), \qquad m_2 = \log m_1.\]
    Let \(k_n \to \infty\) be a sequence of positive integers.
    Then for all constants \(\delta > 0\), we have:
    \begin{enumerate}[label=(\roman*)]
        \item If
            \(
            k_n \le
            \frac{m}{\gamma (m_1 - m_2)}
            \left( 1-(1+\delta)\frac{m_2}{m_1(m_1-m_2)} \right)
            ,
            \)
            then whp \(\gwn\) contains
            all trees in \(\tsetpluskn\) as fringe subtrees.
        \item If
            \(
            k_n \ge
            \frac{m}{\gamma(m_1 - m_2)}
            \left( 1-(1-\delta)\frac{m_2}{m_1(m_1-m_2)} \right)
            ,
            \)
            then whp \(\gwn\) does not contain
            all trees in \(\tsetpluskn\) as fringe subtrees.
    \end{enumerate}
    Thus, as \(n \to \infty\),
    \[
        K_n \frac{\log \log n}{\log n} \inprob \frac{1}{\gamma},
    \]
    and more precisely,
    \[
        \left[
            \frac{K_n \gamma \left[
                \log \frac{\log n}{\gamma} - \log \log \frac{\log n}{\gamma}
            \right]}{\log n} - 1
            \right]
        \times
        \frac{(\log \log n)^2}{\log \log \log n} \inprob -1.
    \]
\end{theorem}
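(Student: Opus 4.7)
The plan is to extract (i) from Lemma~\ref{faimly:lem:all:appear:condition} and (ii) from Theorem~\ref{intro:thm:fringe:count}(i) applied to the sequence $T_n := T^{\mathbf{min}}_{k_n}$, following the template of Theorems~\ref{family:thm:fringe:all:appear} and~\ref{family:thm:well:behaved}. All the real work reduces to a careful two-term expansion of $\log(n \pleast_{k_n}) = m - \gamma k_n \log k_n + O(\gamma \log k_n)$ under the refined hypothesis; the one-term expansion used in Theorem~\ref{family:thm:fringe:all:appear} is too crude to resolve the correction of order $m_2/m_1$.

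First I would set $k_0 := m/(\gamma(m_1 - m_2))$ and write each candidate $k_n$ in the form $k_n = k_0(1 - \eta_n)$ with $\eta_n = c \cdot m_2/(m_1(m_1 - m_2))$, where $c = 1+\delta$ covers case (i) and $c = 1-\delta$ covers case (ii). Using $\log(m_1 - m_2) = m_2 - m_2/m_1 + O((m_2/m_1)^2)$ gives
\begin{align*}
\log k_0 &= m_1 - m_2 + \frac{m_2}{m_1} + O\!\left(\frac{m_2^2}{m_1^2}\right), \\
\gamma k_0 \log k_0 &= m + \frac{m \cdot m_2}{m_1(m_1 - m_2)} + O\!\left(\frac{m \cdot m_2^2}{m_1^3(m_1 - m_2)}\right).
\end{align*}
A first-order expansion of $\gamma k_n \log k_n$ around $k_0$ (using $\gamma k_0 \log k_0 \cdot \eta_n = m \eta_n (1+o(1))$ and $\gamma k_0 \eta_n = o(m \eta_n)$) then yields
\begin{equation*}
m - \gamma k_n \log k_n = (c - 1) \cdot \frac{m \cdot m_2}{m_1 (m_1 - m_2)} \cdot (1 + o(1)).
\end{equation*}

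The error $O(\gamma \log k_n)$ in $\log(1/\pleast_{k_n})$ and the correction $\log k_n$ needed to convert $n\pleast_{k_n}$ into $n\pleast_{k_n}/k_n$ for Lemma~\ref{faimly:lem:all:appear:condition} are both $O(m_1)$, whereas the signed gap above has magnitude $\delta \cdot m \cdot m_2/m_1^2$, which dominates because $m_1 = O(\log m)$ forces $m \cdot m_2/m_1^3 \to \infty$. Consequently, in case (i), $\log(n \pleast_{k_n}/k_n) \to +\infty$ and Lemma~\ref{faimly:lem:all:appear:condition} gives the result; in case (ii), $n \pi(T^{\mathbf{min}}_{k_n}) = n \pleast_{k_n} \to 0$ and Theorem~\ref{intro:thm:fringe:count}(i) forces $\treecount_{T^{\mathbf{min}}_{k_n}}(\gwn) = 0$ whp, so the possible tree $T^{\mathbf{min}}_{k_n} \in \tsetpluskn$ is missing from $\gwn$. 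The two displayed convergence statements for $K_n$ follow by sandwiching $K_n$ between the bounds from (i) and (ii); the prefactor $m_1(m_1 - m_2)/m_2 = (\log\log n)^2/\log\log\log n \cdot (1 - o(1))$ matches exactly the scaling in the refined limit.

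The main obstacle is bookkeeping in the expansion: one must retain the $m_2/m_1$ correction inside $\log k_0$, because without it the second-order term $m \cdot m_2/(m_1(m_1 - m_2))$ in $\gamma k_0 \log k_0$ — which is precisely what locates the threshold — is lost and all that remains is the first-order statement $K_n \log\log n/\log n \inprob 1/\gamma$. Once this expansion is in place, the rest is the same sandwich already used for Theorems~\ref{family:thm:fringe:all:appear} and~\ref{family:thm:well:behaved}.
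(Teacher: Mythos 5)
Your proposal is correct and follows essentially the same route as the paper: the paper's own proof is exactly this two-step reduction (Lemma \ref{faimly:lem:all:appear:condition} for part (i), Theorem \ref{intro:thm:fringe:count} applied to \(\tleast\) for part (ii)), with the two-term expansion of \(\log(n\pleast_{k_n})\) left implicit, and your computation — the signed gap \((c-1)\,m m_2/(m_1(m_1-m_2))\) dominating the \(O(m_1)\) errors from both the hypothesis on \(\pleast_i\) and the division by \(k_n\) — is precisely the omitted bookkeeping. Two cosmetic remarks: the error term in \(\gamma k_0\log k_0\) should be \(O\bigl(m m_2^2/(m_1^2(m_1-m_2))\bigr)\) rather than with \(m_1^3\) in the denominator (still negligible against the main correction), and the passage from the boundary value of \(k_n\) to an arbitrary \(k_n\) satisfying the inequality in (i) or (ii) should be justified by noting that \(\pleastk\) is non-increasing in \(k\).
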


\begin{proof}
    Let \(k = k_n\).  For (i), we can show that \(n \pleastk /k\to \infty\).
    It follows from Lemma \ref{faimly:lem:all:appear:condition} that whp \(\gwn\) contains all
    trees of size at most \(k\) as fringe subtrees.

    For (ii), similarly we can show that \( n \pleastk \to 0.\) It follows from Theorem
    \ref{intro:thm:fringe:count} that whp there is at least one tree in \(\tsetplus{k}\) that does not
    appear in \(\gwn\) as fringe subtrees.
\end{proof}

\section{Non-fringe subtrees}

\label{sec:nfringe}

\newcommand{\veca}{{\bm a}}
\newcommand{\vecb}{{\bm b}}
\newcommand{\vecd}{{\bm d}}

In this section we prove Theorem \ref{intro:thm:nonfringe:count}, the concentration of
non-fringe subtree counts in conditional Galton-Watson trees.

Given a tree \(T\), let \(v(T)\) be the number of its internal nodes and let \(\ell(T)\) be the
number of its leaves.
Recall that \(\nftcount :=\sum_{u \in \gwn} \iverson{T \rootat \gw_{n,u}}\),
and that \( \xitildenvec := (\xitildenlist)\) is a uniform random rotation of the preorder degree sequence of
\(\gwn\).

To simplify the notation, write \(v := v(T)\) and \(\ell:=\ell(T)\).
By Lemma \ref{pre:lem:tree:deg:seq},
\(T\) has a preorder degree sequence of the form
\begin{align}
    & (\veca_1, 0, \veca_2, 0, \ldots, \veca_{\ell}, 0)
    :=
    \nonumber
    \\
    &
    \qquad
    \qquad
    ( a_{1,1}, a_{1,2}, \ldots, a_{1,r({1})}, 0,
    a_{2,1}, a_{2,2}, \ldots, a_{2,r({2})}, 0,
    \ldots,
    a_{\ell,1}, a_{2,2}, \ldots, a_{\ell,r({\ell})}, 0)
    \label{eq:deg:seq:nfringe:raw:form}
\end{align}
for positive integers \(r(1), r(2),\ldots, r(\ell)\) and that
\begin{equation}
        \sum_{s=1}^{\ell} r({s}) = v,
        \qquad
        a_{s,t} > 0,
        \qquad
        \sum_{s=1}^{\ell} \veca_s :=
        \sum_{s=1}^{\ell} \sum_{t=1}^{r(s)} a_{s,t}
        = v + \ell - 1.
    \label{eq:deg:seq:nfringe:raw:form:cond}
\end{equation}

Therefore, if \(T \rootat T'\), then \(T'\) has a preorder degree sequence of the form
\begin{equation}
    (\veca_{1}, \vecb_{1}, \veca_{2}, \vecb_{2}, \ldots, \veca_{\ell}, \vecb_{\ell})
    \label{eq:deg:seq:nfringe:extended:form}
\end{equation}
where \(\vecb_{1}, \ldots, \vecb_{\ell}\) are preorder degree sequences of some plane trees.
Thus each non-fringe subtree of shape \(T\) in \(\gwn\) corresponds to a segment of
\( \xitildenvec\) of the form of \eqref{eq:deg:seq:nfringe:extended:form}.
If none of the segments overlap with each other, then we can permute them into
the form
\(
    (\veca_{1}, \ldots, \veca_{\ell}, \vecb_{1}, \ldots, \vecb_{\ell}).
\)
Since \( \xitildenvec\) is permutation invariant, the new sequence still has the distribution of
\( \xitildenvec\). In other words, \(\nftcount\) is almost distributed
like the number of the patterns \( (\veca_1, \veca_2, \ldots, \veca_\ell)\) in \( \xitildenvec\).

The problem with this argument is that non-fringe subtrees can overlap.
But as shown later in this section,
under the assumptions of Theorem \ref{intro:thm:nonfringe:count}, the effect of such overlaps is negligible.

\newcommand{\cycD}[1]{\widetilde{\cD}_{#1}}
\newcommand{\cycDn}{\cycD{n}}

We will use
\(\cD_{n}\) to denote the set of preorder degree sequences of trees with size \(n\).
Let \(\cycDn\) be the set of sequences that are cyclic rotations of sequences in \(\cD_n\).
Given \(\vecd := (d_1,\ldots, d_n) \in \cycDn\),
let \( \deg_i(\vecd) := (d_i, d_{i+1}, \ldots, d_{i+k-1})\) such that
\(\deg_{i}(\vecd) \in \cD_{k}\) for some \(k \ge 1\), where the indices are all modulo \(n\).
Lemma \ref{pre:lem:tree:deg:seq} guarantees that such \(\deg_i(\vecd)\) exists and is
unambiguous.
Let \(T_i(\vecd)\) be the tree with the preorder degree sequence \(\deg_i(\vecd)\).

\subsection{Factorial moments}

Let \( (x)_r := x(x-1)\cdots (x-r+1)\). For a random variable \(X\), \(\e (X)_{r}\) is called the
\emph{\(r\)-th factorial moment} of \(X\). We give exact formulas for the
first and second factorial moments of \(\nftcount\) in this subsection.

\begin{lemma}
    \label{nonfringe:lem:expectation}
    Assume that \(\p{|\gw|=n}>0\).
    Let \(T\) be a tree.  We have
    \[
        \frac
        {
            \E{\nftcount}
        }
        {
            n
        }
        =
        \nfpi(T)
        \frac{
            \p{S_{n-v(T)} = n-v(T)-\ell(T)}
        }{\p{S_{n} = n-1}}.
    \]
\end{lemma}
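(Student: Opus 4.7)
I plan to write $\nftcount = \sum_{i=1}^{n} I_i$ with $I_i = \iverson{T \rootat \gw_{n, v_i}}$, where $v_1, \ldots, v_n$ are the nodes of $\gwn$ in DFS order. By \eqref{eq:deg:seq:nfringe:extended:form}, $I_i = 1$ iff there exist plane trees $T_1, \ldots, T_\ell$ (with preorder degree sequences $\vecb_1, \ldots, \vecb_\ell$) such that $(\xin_i, \xin_{i+1}, \ldots, \xin_{i+m-1}) = (\veca_1, \vecb_1, \ldots, \veca_\ell, \vecb_\ell)$ for $m := v + \sum_j |T_j|$; Corollary \ref{cor:pre:tree:deg:seq} makes the tuple $(T_1, \ldots, T_\ell)$ unique when it exists. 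Let $\tilde I_i$ be the analogous cyclic indicator on $\xitildenvec$ with indices taken modulo $n$. I claim $\sum_{i=1}^{n} \tilde I_i = \nftcount$: the total cyclic count on $\xitildenvec$ equals the total cyclic count on $\xinvec$ since cyclic counts are rotation-invariant, and a cyclic match in $\xinvec$ cannot wrap past position $n$---otherwise $\xin_i, \ldots, \xin_n$ would be a strict prefix of the matched preorder degree sequence of length $m > n-i+1$, forcing $\sum_{t=i}^{n} \xin_t \ge n-i+1$; but $\xinvec$ being itself a preorder degree sequence forces $\sum_{t=i}^{n} \xin_t = (n-1) - \sum_{t<i} \xin_t \le n-i$, a contradiction. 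This mirrors the argument in the proof of Lemma \ref{pre:lem:tree:count:expectation}.

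\textbf{Reduction via the rotation lemma.} Since $\xitildenvec$ is cyclically exchangeable, $\E{\tilde I_i}$ is constant in $i$ and $\E{\nftcount} = n\,\E{\tilde I_1}$. Applying Lemma \ref{pre:lem:tree:deg:seq:rotation},
\[
  \E{\tilde I_1}
  = \frac{1}{\p{S_n = n-1}}
    \sum_{T_1, \ldots, T_\ell}
    \p{(\xi_1, \ldots, \xi_m) = (\veca_1, \vecb_1, \ldots, \veca_\ell, \vecb_\ell),\ S_n = n-1}.
\]
The pattern values are deterministic given $(T_1, \ldots, T_\ell)$ and have total sum $m-1$ by \eqref{eq:deg:seq:nfringe:raw:form:cond}; by independence of the $\xi_i$'s each summand therefore factors as $\nfpi(T)\,\prod_j \p{\gw = T_j}\,\p{S_{n-m} = n-m}$, using $\prod_{s,t} p_{a_{s,t}} = \nfpi(T)$.

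\textbf{Collapsing the tree sum (the main obstacle).} The remaining task is to prove
\[
  \sum_{T_1, \ldots, T_\ell}
  \Bigl(\prod_j \p{\gw = T_j}\Bigr)\,
  \p{S_{n - v - \sum_j |T_j|} = n - v - \sum_j |T_j|}
  \;=\;
  \p{S_{n-v} = n - v - \ell}.
\]
Grouping trees of each fixed size and setting $K_j = |T_j|$, the left side equals $\E{\p{S_{n-v-\tau_\ell} = n-v-\tau_\ell \mid \tau_\ell}}$ with $\tau_\ell = K_1 + \cdots + K_\ell$ a sum of $\ell$ independent copies of $|\gw|$. Realizing these copies as the sizes of $\ell$ Galton-Watson trees read successively off a fresh i.i.d.\ sequence $\xi'_1, \xi'_2, \ldots$ (so $|\gw|$ is the first $m$ with $\xi'_1 + \cdots + \xi'_m = m-1$, and inductively $\tau_\ell$ is the first $m$ with $S'_m := \xi'_1 + \cdots + \xi'_m = m - \ell$), the post-$\tau_\ell$ increments are independent of $\tau_\ell$, giving
\[
  \sum_{k \ge \ell} \p{\tau_\ell = k}\,\p{S'_{n-v-k} = n-v-k}
  = \p{\tau_\ell \le n-v,\ S'_{n-v} = n - v - \ell}
  = \p{S_{n-v} = n - v - \ell},
\]
where the last equality uses $\{S'_{n-v} = n-v-\ell\} \subseteq \{\tau_\ell \le n-v\}$ (the value $m = n-v$ itself satisfies the defining equation of the first hitting time $\tau_\ell$). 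Multiplying by $n\,\nfpi(T)/\p{S_n = n-1}$ delivers the stated formula. The one nontrivial move is the recognition of the combinatorial sum over $\ell$-tuples of trees as a stopping-time computation for a Galton-Watson forest.
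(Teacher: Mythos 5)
Your proposal is correct, and its opening reduction (writing $\nftcount$ as a sum of cyclic indicators, ruling out wrap-around matches, using cyclic exchangeability to get $\E{\nftcount}=n\,\E{\tilde I_1}$, and passing to the i.i.d.\ sequence conditioned on $S_n=n-1$ via Lemma \ref{pre:lem:tree:deg:seq:rotation}) coincides with the paper's. Where you genuinely diverge is the key step. The paper never sums over the filler trees: it observes that the conditional law of the degree sequence is exchangeable, and builds a probability-preserving bijection between the event $\{\deg_1(\vecd)=(\veca_1,\vecb_1,\ldots,\veca_\ell,\vecb_\ell)\text{ for some }\vecb_j\}$ and the event $\{(\tilde\xi_1^{\mathbf n},\ldots,\tilde\xi_v^{\mathbf n})=(\veca_1,\ldots,\veca_\ell)\}$ by permuting each realization so that the $\veca$-blocks come first; the probability of the latter event is then read off in one line. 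You instead expand over all $\ell$-tuples $(T_1,\ldots,T_\ell)$ of filler trees, factor each term by independence, and collapse the resulting sum $\sum_k \p{\tau_\ell=k}\p{S_{n-v-k}=n-v-k}=\p{S_{n-v}=n-v-\ell}$ via the first-passage representation of $\tau_\ell$ for the left-continuous walk $S'_m-m$ (an Otter--Dwass-type identity), using that $\{S'_{n-v}=n-v-\ell\}\subseteq\{\tau_\ell\le n-v\}$. Both arguments are sound; your route is more computational but self-contained and makes the probabilistic mechanism (a stopping-time decomposition of the Łukasiewicz walk) explicit, while the paper's rearrangement trick is shorter and, importantly, is reused almost verbatim to handle the overlapping configurations in the second factorial moment (Lemma \ref{nonfringe:lem:factorial:two}), where a direct summation in your style would become noticeably heavier. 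Two small points to tidy: tuples with $v+\sum_j|T_j|>n$ contribute zero to the original sum (the pattern's first $n$ entries would sum to at least $n$, contradicting $S_n=n-1$), so the factored expression $\p{S_{n-m}=n-m}$ should be given the convention of vanishing for negative index, matching your restriction $\tau_\ell\le n-v$; and you should state explicitly that $\nfpi(T)=\prod_{s,t}p_{a_{s,t}}$ because the subtrees hanging below the leaf positions are unconstrained and a critical $\gw$ is a.s.\ finite, which is the same fact the paper uses implicitly in its last display.
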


\begin{proof}
    Let \(v := v(T)\) and \(\ell:=\ell(T)\).
    Let \(I_i = \iverson{T \rootat T_i(\xitildenvec)}\). Then \(\nftcount = \sum_{i=1}^{n} I_i\).
    By the permutation invariance of \( \xitildenvec\), we have \(\E{\nftcount}=
    \E{\sum_{i=1}^n I_i}= n \p{I_1=1}\).

    Recall that \(T\) has a preorder degree sequence of the form \( (\veca_1, 0, \ldots, \veca_{\ell}, 0)\)
    satisfying \eqref{eq:deg:seq:nfringe:raw:form:cond}.
    Let \(\cA \subseteq \cycDn\) be the set of sequences such that \( \xitildenvec \in
    \cA\) if and only if \(I_1 = 1\).
    In other words, \( \vecd := (d_1, d_2, \ldots, d_n) \in \cA\) if and only if \( \deg_1(\vecd) =
    (\veca_1, \vecb_1, \ldots, \veca_\ell, \vecb_{\ell})\) for some \(\vecb_1, \ldots,
    \vecb_{\ell}\) which are preorder degree sequences of trees.
    By permuting \( \deg_1(\vecd) \) into \( (\veca_1,\veca_2, \ldots,
    \veca_{\ell}, \vecb_{1}, \vecb_{2}, \ldots, \vecb_{\ell})\), we get a new sequence \(\vecd':= (d_1',
    d_2', \ldots, d_{n}') \in \cA'\)
        where
    \[
        \cA'
        :=
        \left\{
            (e_1, e_2, \ldots, e_n) \in \cycDn
            :
            (e_1, e_2, \ldots, e_v)
            = (\veca_{1}, \veca_{2}, \ldots, \veca_{\ell})
        \right\}
        .
    \]
    Such a permutation defines a mapping \(f:\cA \to \cA'\).

    For every \(\vecd' \in \cA'\), condition \eqref{eq:deg:seq:nfringe:raw:form:cond}
    implies that
    in \(\vecd'\) after \( (\veca_1,\ldots,\veca_\ell)\),
    there are at least \(\ell\) consecutive segments that are preorder degree sequences of trees, i.e.,
     there is a unique \(\vecd \in \cA\) with \(f(\vecd) = \vecd'\).
     Thus \(f\) is a one-to-one mapping. If
    \( \vecd' = f( \vecd)\), then
    \(
        \p{\xitildenvec = \vecd}
        =
        \p{\xitildenvec = \vecd'},
    \)
    since \( \xitildenvec\) is permutation invariant.
    Therefore we have
    \begin{align*}
        \p{I_1 = 1}
        &
        =
        \p{\xitildenvec \in \cA}
        =
        \p{\xitildenvec \in \cA'}
        .
    \end{align*}

    Recall that by Lemma \ref{pre:lem:tree:deg:seq:rotation},
    \( \xitildenvec \sim (\xilist|S_n=n-1)\), where
    \(\xilist\) are i.i.d.\ copies of \(\xi\) and \(S_n = \sum_{s=1}^{n} \xi_s\).
    We have
    \begin{align*}
        \p{\xitildenvec \in \cA'}
        &
        =
        \p{
            (\xitilden_1,\xitilden_2, \ldots, \xitilden_v) = (\veca_1, \veca_2, \ldots, \veca_{\ell})
        }
        \\
        &
        =
        \frac{
        \p{
            (\xi_1,\xi_2, \ldots, \xi_v) = (\veca_1, \veca_2, \ldots, \veca_{\ell})
            ,
            S_{n} = n-1
            }
        }
        {
            \p{S_n = n-1}
        }
        \\
        &
        =
        \p{T \rootat \gw}
        \frac{
            \p{S_{n-v}=n-v-\ell}
        }
        {
            \p{S_n = n-1}
        }
        ,
    \end{align*}
    where in the last step we use \(\sum_{s=1}^{\ell} \veca_s = v + \ell -1\).
\end{proof}

To compute \(\e (\nftcount)_2\), we enumerate all the cases that \(T\) can appear as overlapping
non-fringe subtrees by constructing a set of trees \(\Toplus\) as follows.
For trees \(T\), \(S\) and node \(v \in T\), let \(T'= \splay(T, v, S)\) denote tree \(T\) with subtree
\(T_v\) replaced by \(S\). Thus \(T_v' = S\) .
Let \(\cV(T)\) denote the internal nodes of \(T\).
Then define the collection
\[
    \Toplus = \left[ \bigcup_{v \in \cV(T):T_v \rootat T} \left\{ \splay(T, v, T) \right\} \right]
    \setminus \left\{T\right\}.
\]
Note that \(|\Toplus| < v(T)\).
Also note that given \(T' \in \Toplus\) we can always find a unique node \(v \in \cV(T)\) such that
\(T'=\splay(T,v,T)\).
See Figure \ref{nonfringe:fig:tree:glue} for an example of \(\Toplus\).

\begin{figure}[ht!]
  \centering
    \begin{tikzpicture}
    \node[anchor=south west,inner sep=0] at (0,0) {\includegraphics{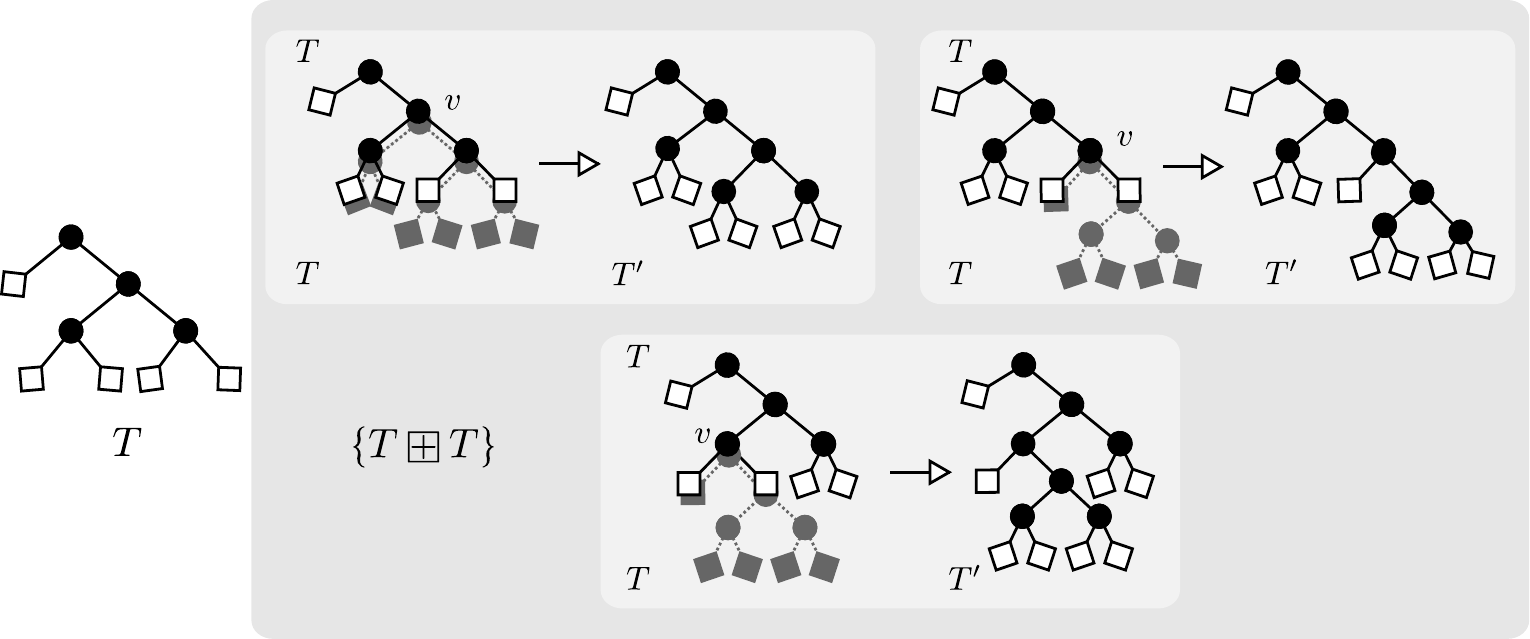}};
    \end{tikzpicture}
    \caption{An example of \(\Toplus\).}
    \label{nonfringe:fig:tree:glue}
\end{figure}

\begin{lemma}
    \label{nonfringe:lem:factorial:two}
    Assume that \(\p{|\gw| = n} > 0\).
    Let \(T\) be a tree.         We have
    \begin{align*}
        \E{ (\nftcount)_2 }
        &
        =
        n(n-2 v(T)+1)
        \nfpi(T)^2
        \frac{
            \p{S_{n-2v(T)} = n + 1 - 2(v(T)+\ell(T))}
        }{
            \p{S_{n}=n-1}
        }
        \\
        &
        +
        2n
        \sum_{T' \in \Toplus}
        \nfpi(T')
        \frac{
            \p{S_{n-v(T')} = n-v(T')-\ell(T')}
        }{\p{S_{n} = n-1}}.
    \end{align*}
\end{lemma}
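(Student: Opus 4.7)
I would expand $(\nftcount)_2 = \sum_{i \neq j} I_i I_j$, with $I_i := \iverson{T \rootat T_i(\xitildenvec)}$, and classify each ordered pair $(i, j)$ as \emph{overlapping}---when one root sits at a spine-internal node of the $T$-copy rooted at the other---or \emph{non-overlapping}. The two contributions to $\E{(\nftcount)_2}$ are then analyzed separately.

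For the overlapping contribution, each overlap ordered pair $(u_1, u_2)$ corresponds (after choosing an orientation) to a non-fringe occurrence of some $T' \in \Toplus$: if $u_2$ lies at the internal node $v \in \cV(T)$ of the $T$-copy at $u_1$, then the condition $T_v \rootat T$ from the definition of $\Toplus$ is precisely what ensures that $T' := \splay(T, v, T)$ has $T$-non-fringe structure both at its root and at $v$, and conversely each $T'$-occurrence produces two ordered overlap pairs. Hence the overlap contribution equals $2 \sum_{T' \in \Toplus} \nftreecount_{T'}(\gwn)$, and Lemma~\ref{nonfringe:lem:expectation} applied to each $T'$ yields the second term of the formula.

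For the non-overlapping contribution, I would adapt the permutation-bijection from Lemma~\ref{nonfringe:lem:expectation} to a two-spine setting. Write $v := v(T)$ and $\ell := \ell(T)$. Since $\xitildenvec \eql (\xilist \mid S_n = n - 1)$ is exchangeable by Lemma~\ref{pre:lem:tree:deg:seq:rotation}, and the $2v$ spine positions are disjoint under non-overlap, permuting them to positions $1, \ldots, 2v$ of a canonical sequence in $\cycDn$---filled with $(\veca_1, \ldots, \veca_\ell, \veca_1, \ldots, \veca_\ell)$, which has total sum $2(v + \ell - 1)$---preserves probability, giving each canonical sequence probability $\nfpi(T)^2 \, \p{S_{n - 2v} = n + 1 - 2(v + \ell)} / \p{S_n = n - 1}$. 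A counting argument should then show that exactly $n(n - 2v + 1)$ ordered non-overlap triples $(\vecd, i, j)$ map to each canonical sequence: a factor $n$ from cyclic rotations, and $n - 2v + 1$ from the admissible placements of the second $T$-root after contracting each $v$-spine to a single marked position.

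The main obstacle will be to make this counting precise, particularly in the \emph{nested} non-overlap subcase where one $T$-copy sits inside a hanging subtree of the other while the two spines remain disjoint. A cycle-lemma--style injection analogous to the one used in Lemma~\ref{nonfringe:lem:expectation}, handling the separate and nested subcases uniformly, should settle this count and, combined with the overlap contribution, yield the formula.
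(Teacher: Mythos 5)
Your plan follows the paper's proof essentially step for step: the same split of ordered pairs according to whether the two sets of spine (internal-node) positions intersect, the same identification of the overlapping contribution as twice the non-fringe occurrences of the trees in $\Toplus$ followed by an application of Lemma~\ref{nonfringe:lem:expectation}, and the same exchangeability/permutation argument that reduces the spine-disjoint pairs (including the nested configurations) to canonical degree sequences beginning with two consecutive copies of the spine pattern, producing the factor $n(n-2v(T)+1)$ exactly as you describe. The one step you assert rather than verify---that every spine-overlapping ordered pair is accounted for by an occurrence of some $T'\in\Toplus$---is treated at the same level of detail in the paper's own argument, so your proposal matches the paper's route.
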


\begin{proof}
    Let \(v = v(T)\) and \(\ell = \ell(T)\).
        Let \(I_i\) be defined as in the proof of Lemma \ref{nonfringe:lem:expectation}.
    Since \( I_1,\ldots, I_n \) are indicator random variables and permutation invariant, we have
    \[
        \E{(\nftcount)_2}
                                =
        \sum_{1 \le i\ne j \le n} \E{I_i I_j}
        = n \sum_{i = 2}^{n} \E{I_1 I_{i}}.
    \]

    The event \(I_{1} I_i =1\) happens if and only if \(T \rootat T_1(\xitildenvec)\) and \(T
    \rootat T_i(\xitildenvec)\) both happen.
    Thus instead of summing \(\E{I_1 I_{i}}\) over \(i\), we can sum
    \(\p{\xitildenvec = \vecd}\) over pairs \( (i, \vecd) \in \{2,\ldots,n\} \times \cycDn\) that satisfy
        \(T \rootat T_1(\vecd)\) and \(T \rootat T_i(\vecd)\), i.e.,
    \[\deg_1(\vecd) = (\veca_1, \vecb_1, \veca_2, \vecb_2, \ldots, \veca_\ell, \vecb_\ell),
        \qquad
        \text{and}
        \qquad
        \deg_i(\vecd) = (\veca_1, \vecb_1', \veca_2, \vecb_2', \ldots, \veca_\ell, \vecb_\ell'),\]
        where \( (\veca_1, 0, \ldots, \veca_{\ell}, 0) \) is the preorder degree sequence of \(T\) and
    \(\vecb_1, \vecb_1', \ldots, \vecb_\ell, \vecb_\ell'\) are
    preorder degree sequences of trees.
    Let \(\cA\) be the set of such pairs.
    Then \(\sum_{i\ge 2}\E{I_1I_i} = \sum_{(i,\vecd)\in\cA} \p{\xitildenvec = \vecd}\).

    For \(1 \le r \le n\), let \(\idxVecd{r}\) be the set of positions in \(\vecd\) that are occupied by \(\deg_r(\vecd)\), i.e.,
    \[
        \idxVecd{r}
        :=
        \left\{
            j \bmod n: r \le j < r + |\deg_r(\vecd)|
        \right\}
        .
    \]
    Let \(\idxVecdIn{1} \subseteq \idxVecd{1}\) be the set of positions in \(\vecd\) that are occupied
    by the parts of \(\deg_1(\vecd)\) that correspond to \(a_1, \ldots, a_\ell\). Let \(\idxVecdOut{1} = \idxVecd{1} \setminus \idxVecdIn{1}\). Define
    \(\idxVecdIn{i}\) and \(\idxVecdOut{i}\) accordingly.
    Let \(\cA' \subseteq \cA\) be the set of \( (i, \vecd)\) in \(\cA\)
    such that
    \[
        \cA'
        =
        \left\{
        (i,\vecd) \in \cA : \idxVecdIn{1} \cap \idxVecdIn{i} = \emptyset
        \right\}
        .
    \]
    Let \(\cA'' := \cA \setminus \cA'\).

    If \( (i, \vecd) \in \cA'' \), then \(\idxVecdIn{1} \cap \idxVecdIn{i} \ne \emptyset\).
    In other words, either \(T_i\) is fringe subtree of \(T_1\) and \(T_i\) is rooted at a node that corresponds
    to an internal node of \(T\) (regarding that \(T_1\) is a non-fringe subtree of the shape
    \(T\)), or vice versa.
    Thus there exists a \(T' \in \Toplus\) such that either \(T' \rootat T_1(\vecd)\) or \(T'
    \rootat T_i(\vecd)\).
    By symmetry, we have
    \begin{align*}
        \sum_{(i,\vecd) \in \cA''} \p{\xitildenvec=\vecd}
        &
        =
        2 \sum_{T' \in \Toplus} \p{T' \rootat T_1(\xitildenvec)}
        \\
        &
        =
        2 \sum_{T' \in \Toplus} \nfpi(T')
        \frac{\p{S_{n-v(T')}=n-v(T')-\ell(T')}}{\p{S_{n}=n-1}}
        ,
        \numberthis \label{eq:S:overlap}
    \end{align*}
    where the last step follows from Lemma \ref{nonfringe:lem:expectation}.

    Now consider \( (i, (d_1,\ldots,d_n)) \in \cA'\).
    Arrange \((d_1,\ldots,d_n)\) in a cycle.
    Paint the segment \(\deg_1((d_1,\ldots,d_n))\) red and the segment \(\deg_i((d_1,\ldots,d_n))\) blue.
    One of the three cases must be true:
        (i) \(\idxVecd{1} \cap \idxVecd{i} = \emptyset\) --- The red segment and the blue segment
            do not overlap.
        (ii) \(\idxVecd{i} \subseteq \idxVecd{1}\) --- The red segment contains the blue segment.
        (iii) \(\idxVecd{1} \subseteq \idxVecd{i}\) --- The blue segment contains the red segment.
    (Since \(\deg_1(\vecd)\) and \(\deg_i(\vecd)\) are both preorder degree sequences,
    if \(\idxVecd{1} \cap \idxVecd{i} \ne \emptyset\) then either (ii) or (iii) must
    happen. And since \(i \ne 1\) we cannot have \(\idxVecd{i} = \idxVecd{1}\).)
    Figure \ref{nonfringe:fig:tree:not:glue} gives examples of the three cases.
    \begin{figure}[ht!]
    \centering
        \begin{tikzpicture}
        \node[anchor=south west,inner sep=0] at (0,0) {\includegraphics{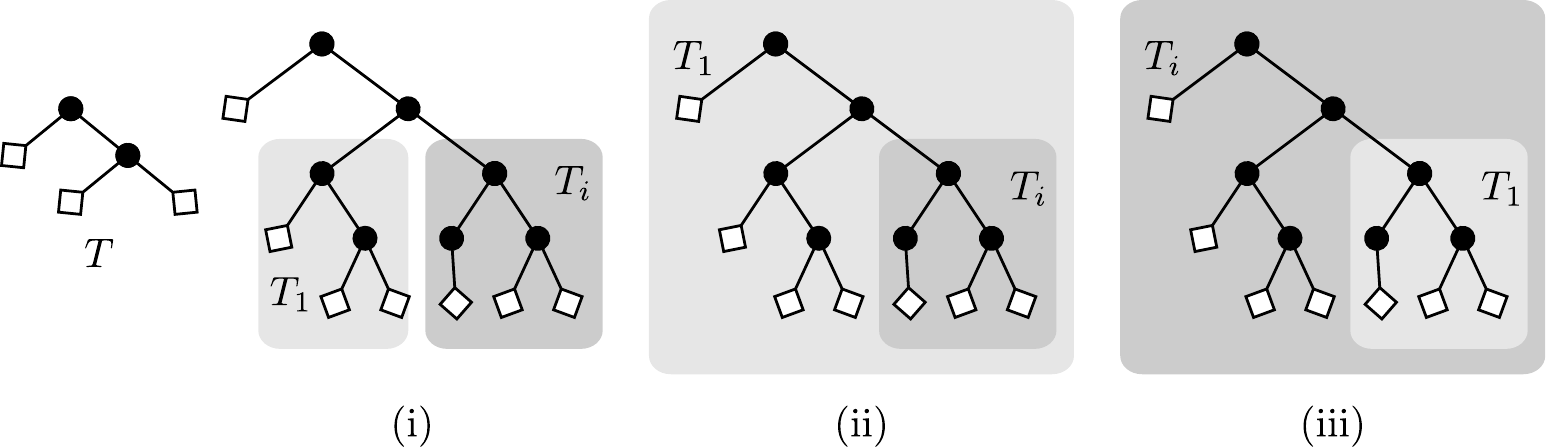}};
        \end{tikzpicture}
        \caption{Examples of three cases in \(\cA''\).}
        \label{nonfringe:fig:tree:not:glue}
    \end{figure}

    We permute \( (d_1,\ldots,d_n) \) as follows.
    For (i) and (ii), we first permute the red segment from
    \( (\veca_1,\vecb_1,\ldots,\veca_\ell,\vecb_\ell) \)
    to
    \( (\veca_1,\ldots,\veca_\ell,\vecb_1,\ldots,\vecb_\ell) \).
    Then we permute the blue segment of
    from
    \( (\veca_1,\vecb_1',\ldots,\veca_\ell,\vecb_\ell') \)
    to
    \( (\veca_1,\ldots,\veca_\ell,\vecb_1',\ldots,\vecb_\ell') \).
    It is clear this can be done in case (i).
    And it is not difficult to see that in case (ii) the positions that are occupied by the
    blue segment is completely contained by the positions that are occupied by \(b_{\ell'}\) for some
    \(1 \le \ell' \le \ell\).
    This means that \(T_i\) is a fringe subtree of \(T_1\) and the root of \(T_i\) does not
    correspond to an internal node of \(T\) (because \(T_1\) is a non-fringe subtree in the
    shape of \(T\)).
    So the first step of the permutation moves the blue segment but does not change its
    contents and we can carry out the second step without problem.

    In case (iii), we reverse the order of the two steps.  After this the starting position of the red segment may
    have changed. We rotate the new sequence such that the red segment still starts from position
    \(1\).

    In the end, we get a new pair \( (i', (d_1',\ldots,d_n')) \) such that
    \( (d_1',d_2',\ldots,d_v') = (\veca_1,\ldots,\veca_\ell)\)
    and
    \( (d'_{i'}, d'_{i'+1},\ldots,d'_{i'+v-1}) = (\veca_1,\ldots,\veca_\ell)\).
    Let \(\cB \subseteq \{v+1,\ldots,n\} \times \cycDn\) be the set of such pairs.
    The above permutation defines a mapping \(f:\cA'\to\cB\).
    Since given \( (i',\vecd') \in \cB\), we can without ambiguity recover the red segment and blue
    segment of \(\vecd'\), the mapping is reversible, i.e., \(f\) is one-to-one.
    Since \( \xitildenvec\) is permutation invariant, if \( (i',\vecd') = f(i, \vecd)\), then
    \(\p{ \xitildenvec = \vecd} = \p{\xitildenvec = \vecd'}\).
    Therefore
    \[
        \sum_{(i, \vecd) \in \cA'}
        \p{ \xitildenvec = \vecd}
        =
        \sum_{(i, \vecd) \in \cB}
        \p{\xitildenvec = \vecd}
        .
    \]

                Given \( (i, (d_1,\ldots, d_n)) \in \cB\), we can move the segment \( (d_{i}, \ldots,
    d_{i+v-1})\) to the position \(v+1\) to get a new sequence \( (d_1', \ldots, d_{n}') \in \cC\),
    where
    \[
        \cC:=\left\{
            (e_1,\ldots,e_n)\in \cycDn:
            (e_1, \ldots,e_{2v}) = (
            \veca_1,\ldots,\veca_\ell,
            \veca_1,\ldots,\veca_\ell
            )
        \right\}
        .
    \]
    Since there are \(n-2v+1\) possible values of \(i\), this permutation gives us a \(
    (n-2v+1)\)-to-one mapping \(h:\cB\to\cC\) and
    if \( \vecd' = h(i, \vecd)\), then
    \(\p{\xitildenvec = \vecd} = \p{\xitildenvec = \vecd'}\).

    We obtain as usual
    \begin{align*}
        \sum_{\vecd \in \cC}
        \p{\xitildenvec = \vecd}
        &
        =
        \p{
            (\xitilden_1,\ldots,\xitilden_{2v} )
            =
            (\veca_1,\ldots,\veca_{\ell},\veca_1,\ldots, \veca_{\ell})
        }
        \\
        &
        =
        \p{
            (\xi_1,\ldots,\xi_{2v} )
            =
            (\veca_1,\ldots,\veca_{\ell},\veca_1,\ldots, \veca_{\ell})
            ~|~
            S_n = n-1
        }
        \\
        &
        =
        \p{
            (\xi_1,\ldots,\xi_{2v} )
            =
            (\veca_1,\ldots,\veca_{\ell},\veca_1,\ldots, \veca_{\ell})
        }
        \\
        &
        \times
        \frac{
        \p{
            S_{n-2v} = (n-1)-2(v+\ell-1)
        }}
        {
            \p{S_{n} = n-1}
        }
        \\
        &
        =
        \nfpi(T)^2
        \frac{
        \p{
            S_{n-2v} = n+1-2(v+\ell)
        }}
        {
            \p{S_{n} = n-1}
        }         .
    \end{align*}

            It follows that         \begin{align*}
        \sum_{(i, \vecd) \in \cA'}
        \p{ \xitildenvec = \vecd}
        &
        =
        \sum_{(i', \vecd) \in \cB}
        \p{\xitildenvec = \vecd}
        =
        (n-2v+1) \sum_{\vecd \in \cC}
        \p{\xitildenvec = \vecd}
        \\
        &
        =
        (n-2v+1)
        \nfpi(T)^2
        \frac{
        \p{
            S_{n-2v} = n+1-2(v+\ell)
        }}
        {
            \p{S_{n} = n-1}
        } \numberthis \label{eq:S:separate}
        .
    \end{align*}

    The lemma follows by combining \eqref{eq:S:overlap} and \eqref{eq:S:separate} with the
    following:
    \begin{align*}
        \e{(\nftcount)_2}
        &
        =
        n \sum_{i=2}^n \E{I_{1} I_{i}}
        =
        n \sum_{(i,\vecd)\in\cA} \p{\xitildenvec = \vecd}
        \\
        &
        =
        n \sum_{(i,\vecd)\in\cA'} \p{\xitildenvec = \vecd}
        +
        n \sum_{(i,\vecd)\in\cA''} \p{\xitildenvec = \vecd}
        .
        \qedhere
    \end{align*}
\end{proof}

\subsection{Sequence of non-fringe subtrees}

\label{sec:nfringe:asympt}

Let \(T_{n}\) be a sequence of trees.
Let \(v_n  := v(T_n)\) and \(\ell_n  := \ell(T_n)\).
In this subsection we prove Theorem \ref{intro:thm:nonfringe:count}, the concentration of
\(\nftncount\).
First we narrow down the scopes of \(v_n\) and \(\ell_n\).
\begin{lemma}
    \label{nonfringe:lem:v:upper}
    Assume Condition \ref{cond:offspring}.
    There exists a constant \(C_1 > 0\) such that
    \[
        \sup_{T:v(T) > C_1 \log n} n \nfpi(T) \to 0.
    \]
    Thus if \(v_n > C_1 \log n\), then \(\nftncount \inprob 0\) as \(n \to \infty\).
\end{lemma}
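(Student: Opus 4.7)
The plan is to bound $\nfpi(T)$ from above by a quantity that decays like a negative power of $n$ whenever $v(T) > C_1 \log n$, then convert the resulting bound on $\e \nftncount$ into convergence in probability via Markov's inequality.

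First, I would observe from the definition of $T \rootat \gw$ that each internal node of $T$ forces a specific offspring count in $\gw$, while the leaves of $T$ impose no constraint on $\gw$. Concretely, writing the preorder degree sequence of $T$ in the form \eqref{eq:deg:seq:nfringe:raw:form}, independence of the offspring in $\gw$ yields
\[
    \nfpi(T) \;=\; \prod_{s=1}^{\ell(T)}\prod_{t=1}^{r(s)} p_{a_{s,t}} \;\le\; \pmax^{v(T)},
\]
where $\pmax := \max_i p_i$. Since $\V \xi > 0$ under Condition \ref{cond:offspring}, $\xi$ is not deterministic, so $\pmax < 1$. Choosing for instance
\[
    C_1 \;:=\; \frac{2}{\log(1/\pmax)},
\]
any $T$ with $v(T) > C_1 \log n$ satisfies $n\nfpi(T) \le n\, \pmax^{C_1 \log n} = n^{-1}$, which establishes the first display of the lemma uniformly in $T$.

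For the second statement, I would plug the sequence $T_n$ into Lemma \ref{nonfringe:lem:expectation} and write $\e \nftncount = n\,\nfpi(T_n)\, R_n$, where $R_n = \p{S_{n-v_n}=n-v_n-\ell_n}/\p{S_n=n-1}$. Bounding the numerator trivially by $1$, and using the cycle-lemma identity $\p{S_n = n-1} = n\,\p{|\gw|=n}$ together with Lemma \ref{pre:lem:prob:T:eql:n} to get $\p{S_n=n-1} = \Theta(n^{-1/2})$, gives $R_n = O(\sqrt n)$. Therefore
\[
    \e \nftncount \;=\; O\!\left(n^{3/2} \pmax^{v_n}\right) \;=\; O\!\left(n^{-1/2}\right) \;\to\; 0
\]
whenever $v_n > C_1 \log n$, and Markov's inequality then yields $\nftncount \inprob 0$.

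The only real subtlety will be confirming the product formula $\nfpi(T) = \prod p_{a_{s,t}}$ with the product running over the $v(T)$ internal-node degrees only; this follows from the observation (already used implicitly in the proof of Lemma \ref{nonfringe:lem:expectation}) that non-fringe matching puts constraints only on internal vertices. Beyond that, the bound $R_n = O(\sqrt n)$ is crude enough to work even when $v_n$ is very large, and the edge case $v_n + \ell_n > n$ is harmless since then $\nftncount = 0$ deterministically. No finer control is needed because the exponential factor $\pmax^{v_n}$ dominates any polynomial blow-up in the ratio $R_n$ once $C_1$ is chosen large enough.
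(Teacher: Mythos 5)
Your proposal is correct and follows essentially the same route as the paper: the bound \(\nfpi(T) \le \pmax^{v(T)}\) with \(\pmax < 1\), the choice \(C_1 = 2/\log(1/\pmax)\), and then Lemma \ref{nonfringe:lem:expectation} with the numerator bounded by \(1\) and \(\p{S_n = n-1} = \Theta(n^{-1/2})\) (the paper cites the local limit theorem directly, while you reach the same estimate via Kemperman's identity and Lemma \ref{pre:lem:prob:T:eql:n}, an equivalent step), finishing with Markov's inequality.
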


\[
    \sup_{T:v(T) > C_1 \log n}
    \nftcount \inprob 0.
\]
The whole \(\gwn\) is of size \(n\) and is a non-fringe subtree itself.

\begin{proof}
    Recall that \(\pmax := \max_{i \ge 0}p_i\).
    We have \(\p{T \rootat \gw} \le \pmax^{v(T)}\).
    Since \(\pmax < 1\), if \(C_1 = 2 /\log(1/\pmax)\), then
    \[
        \sup_{T:v(T) > C_1 \log n}
        n \nfpi(T)
        \le
        n \pmax^{C_1 \log n}
        = n n^{-2} \to 0.
    \]

    By Lemma \ref{nonfringe:lem:expectation}, we have
    \begin{align*}
        \E{\nftncount}
                =
        n \p{T_n \rootat \gw}
        \frac{\p{S_{n-v_n}=n-v_n-\ell_n}}{\p{S_{n}=n-1}}
                        \le
        n \pmax^{v_n}\p{S_{n}={n-1}}^{-1}.
    \end{align*}
    It follows from the well-known local limit theorem (see, e.g., \citet{Kolchin1986random}[thm.\ 1.4.2])
    that \(\p{S_{n}=n-1}= \Theta(n^{-1/2})\). Therefore
    if \(v_n > C_1 \log n\), the above expression is at most
    \[
        n \pmax^{v_n} \bigO{n^{1/2}}
        \le
        n
        n^{-2}
        O(n^{1/2})
        \to 0.
        \qedhere
    \]
\end{proof}

\begin{lemma}
    \label{nonfringe:lem:l:upper}
    Assume Condition \ref{cond:offspring}
    and that \(v_n = O(\log n)\).
    There exists a sequence \(b_n = o(n^{1/2} \log n)\) such that if \(\ell_n > b_n\), then
    \(n \nfpi(T_n) \to 0\) and \(\nftncount \inprob 0\) as \(n \to \infty\).
\end{lemma}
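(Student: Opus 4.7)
My plan is to bound $\nfpi(T_n)$ by the probability of a macroscopic deviation of a mean-one random walk, and then invoke Markov's inequality using the exact mean formula of Lemma \ref{nonfringe:lem:expectation}.

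The first step is the key reduction
\[
\nfpi(T_n) \;\le\; \p{S_{v_n} = v_n + \ell_n - 1}.
\]
Indeed, if $d_1, \dots, d_{v_n} \ge 1$ are the degrees of the internal nodes of $T_n$, then $\sum_i d_i = v_n + \ell_n - 1$ and $\nfpi(T_n) = \prod_{i=1}^{v_n} p_{d_i}$ is one of the summands in $\p{S_{v_n} = v_n + \ell_n - 1} = \sum_{(c_1,\dots,c_{v_n})} \prod_i p_{c_i}$, where the sum is over non-negative integer tuples with total $v_n + \ell_n - 1$.

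Next I would apply Chebyshev's inequality. Under Condition \ref{cond:offspring} we have $\E{S_{v_n}} = v_n$ and $\V{S_{v_n}} = v_n \sigma^2$, so
\[
\p{S_{v_n} = v_n + \ell_n - 1} \;\le\; \frac{v_n \sigma^2}{(\ell_n - 1)^2} \;=\; O\!\left(\frac{\log n}{\ell_n^2}\right),
\]
using $v_n = O(\log n)$. In particular $n \nfpi(T_n) = O(n \log n / \ell_n^2)$. Setting, for example, $b_n := \sqrt{n \log n} \cdot \log\log n$ (which is $o(n^{1/2}\log n)$) forces $n \nfpi(T_n) \to 0$ whenever $\ell_n > b_n$.

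For $\nftncount \inprob 0$, I would apply Markov's inequality to Lemma \ref{nonfringe:lem:expectation}:
\[
\E{\nftncount} \;=\; n\, \nfpi(T_n) \cdot \frac{\p{S_{n-v_n} = n - v_n - \ell_n}}{\p{S_n = n-1}}.
\]
The standard lattice local central limit theorem (which holds under Condition \ref{cond:offspring}) supplies the uniform bound $\p{S_m = j} \le C/\sqrt{m}$, and Lemma \ref{pre:lem:prob:T:eql:n} gives $\p{S_n = n-1} = \Theta(n^{-1/2})$. The ratio is therefore $O(1)$ when $v_n = o(n)$, so $\E{\nftncount} = O(n \nfpi(T_n)) \to 0$, and Markov yields $\nftncount \inprob 0$. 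The main obstacle is the reduction in the first step: any direct attempt to bound the product $\prod p_{d_i}$ in terms of $\ell_n$ must contend with the fact that Condition \ref{cond:offspring} only supplies finite variance, so Cramér-type exponential bounds are unavailable; packaging the product as a one-dimensional deviation probability for $S_{v_n}$ lets the second-moment argument deliver the necessary $1/\ell_n^2$ decay.
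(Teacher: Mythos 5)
Your argument is correct, but it follows a genuinely different route from the paper. The paper's proof is a two-line pigeonhole argument: by \citet[thm.\ 19.7]{janson2012simply} the maximum degree of \(\gwn\) is \(o(n^{1/2})\) whp, say less than \(b_n'\); a tree with \(v_n\) internal nodes and \(\ell_n\) leaves has a node of degree at least \(\ell_n/v_n\), so with \(b_n = v_n b_n' = o(n^{1/2}\log n)\) and \(\ell_n > b_n\) the shape \(T_n\) requires a degree that whp does not occur in \(\gwn\), killing \(\nftncount\). You instead exploit the exact identity \(\nfpi(T_n)=\prod_i p_{d_i}\) over the internal degrees, observe that this product is one summand of \(\p{S_{v_n}=v_n+\ell_n-1}\), and control that point probability by Chebyshev, \(\p{S_{v_n}=v_n+\ell_n-1}\le \sigma^2 v_n/(\ell_n-1)^2 = O(\log n/\ell_n^2)\); then the first-moment bound via Lemma \ref{nonfringe:lem:expectation} and the uniform local bound \(\sup_j\p{S_m=j}=O(m^{-1/2})\) (together with \(\p{S_n=n-1}=\Theta(n^{-1/2})\), which the paper itself derives from the local limit theorem in the proof of Lemma \ref{nonfringe:lem:v:upper} rather than from Lemma \ref{pre:lem:prob:T:eql:n}, which is stated for \(\p{|\gw|=n}\) --- a harmless mis-attribution since \(\p{|\gw|=n}=\p{S_n=n-1}/n\)) gives \(\E{\nftncount}=O(n\nfpi(T_n))\to 0\) and Markov finishes. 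What your approach buys: it is self-contained (no appeal to the max-degree theorem), it yields the quantitative estimate \(n\nfpi(T_n)=O(n\log n/\ell_n^2)\), your \(b_n=\sqrt{n\log n}\,\log\log n\) is deterministic and independent of the particular sequence \(T_n\), and it establishes the stated conclusion \(n\nfpi(T_n)\to 0\) explicitly, which the paper's max-degree argument addresses only implicitly. What the paper's route buys is brevity, at the cost of invoking an external structural result about \(\gwn\).
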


\begin{proof}
    \citet[thm.\ 19.7]{janson2012simply} showed that
    if \(\V{\xi} < \infty\) then there exists a sequence \(b_n' = o(n^{1/2})\) such that
    the maximum degree of \(\gwn\) is less than \(b_n'\) whp.
    Let \(b_n = v_n b_n' = o(n^{1/2} \log n)\).
    If \(\ell_n \ge b_n\), then \(T_n\) must contain at least one node with degree at least
    \(b_n'\). The probability of this event goes to zero.
\end{proof}

\begin{lemma}
    \label{nonfringe:lem:expectation:asympt}
    Assume Condition \ref{cond:offspring}.
    If \(|T_n| = o(n)\), then \(\E{\nftncount} /(n \nfpi(T_n)) \to 1\) as \(n \to \infty\).
\end{lemma}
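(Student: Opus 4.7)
The plan is to reduce the claim to a ratio of random-walk point probabilities and evaluate each side by a local central limit theorem. By Lemma \ref{nonfringe:lem:expectation},
\[
    \frac{\E{\nftncount}}{n\,\nfpi(T_n)} = \frac{\p{S_{n - v_n} = n - v_n - \ell_n}}{\p{S_n = n-1}},
\]
so it suffices to show this ratio tends to $1$. Under Condition \ref{cond:offspring} we have $\E{\xi} = 1$ and $0 < \sigma^{2} = \V{\xi} < \infty$, which is exactly the hypothesis of the classical Gnedenko local CLT: uniformly for $m \to \infty$ and $j$ in the allowed residue class mod $h$,
\[
    \p{S_m = m + j} = \frac{h}{\sigma\sqrt{2\pi m}}\,\exp\!\left(-\frac{j^{2}}{2 m \sigma^{2}}\right) + o(m^{-1/2}).
\]

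First I would pin down the denominator via Lemma \ref{pre:lem:prob:T:eql:n} together with the Dwass identity $\p{S_n = n-1} = n\,\p{|\gw|=n}$, which gives $\p{S_n = n-1} \sim h/(\sigma\sqrt{2\pi n})$. For the numerator I would apply the LCLT with $m := n - v_n$ and $j := -\ell_n$; because $v_n \le |T_n| = o(n)$, one has $m \sim n$, and dividing yields
\[
    \frac{\E{\nftncount}}{n\,\nfpi(T_n)} = \sqrt{\frac{n}{n - v_n}}\,\exp\!\left(-\frac{\ell_n^{2}}{2(n - v_n)\sigma^{2}}\right)(1 + o(1)).
\]
The square-root prefactor tends to $1$ since $v_n = o(n)$, and the exponential factor tends to $1$ as soon as $\ell_n = o(\sqrt n)$, which is the ``clean'' regime.

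The main obstacle is the borderline case $\ell_n = \Theta(\sqrt n)$, where the exponential correction is not obviously negligible. I would rule it out using the preceding Lemmas \ref{nonfringe:lem:v:upper}--\ref{nonfringe:lem:l:upper}: finite variance forces $p_d = o(d^{-2})$ along the support of $\xi$, so any $T_n$ with $\ell_n$ of order $\sqrt n$ must contain an internal node of large degree (as in the max-degree bound underlying Lemma \ref{nonfringe:lem:l:upper}), forcing $\nfpi(T_n)$ to decay so fast that $n\,\nfpi(T_n) \to 0$. In that degenerate regime the uniform LCLT upper bound $\p{S_m = j} = O(m^{-1/2})$ keeps the quotient $\E{\nftncount}/(n\,\nfpi(T_n))$ bounded, and both numerator and denominator vanish in tandem, so the claim holds under the natural convention. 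Combining the clean regime $\ell_n = o(\sqrt n)$ with the exclusion of the borderline regime completes the proof.
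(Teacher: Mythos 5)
Your main line is the paper's: reduce to the ratio \(\p{S_{n-v_n}=n-v_n-\ell_n}/\p{S_n=n-1}\) via Lemma \ref{nonfringe:lem:expectation} and control it with local limit estimates (the paper simply cites Lemma \ref{pre:lem:prob:sum:of:iid} here and concludes in two lines). Your explicit LCLT computation, producing the factor \(\exp\bigl(-\ell_n^2/(2(n-v_n)\sigma^2)\bigr)\), is in fact more careful than the quoted estimate, whose part (i) only covers the unshifted probability \(\p{S_{n-k}=n-k}\); you are right that the shift by \(\ell_n\) is the real issue and that the argument is clean exactly when \(\ell_n=o(\sqrt n)\).

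The gap is in your treatment of the regime \(\ell_n\asymp\sqrt n\), and it is twofold. First, the structural claim that \(\ell_n\asymp\sqrt n\) forces an internal node of large degree is false: take \(r\ge 2\) with \(p_r>0\) (such an \(r\) exists since \(\sigma^2>0\)) and let \(T_n\) be a complete \(r\)-ary tree with \(\asymp\sqrt n\) leaves; all degrees are at most \(r\), and \(n\nfpi(T_n)\to 0\) there for a different reason (namely \(\nfpi(T_n)=p_r^{v_n}\) with \(v_n\asymp\sqrt n\)). Second, and more importantly, even granting \(n\nfpi(T_n)\to 0\), that does not deliver the lemma's conclusion, which is the convergence of the ratio \(\E{\nftncount}/(n\nfpi(T_n))\) to \(1\); ``both sides vanish in tandem'' is not a convention under which that limit holds. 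Indeed, in the \(r\)-ary example your own formula shows the ratio stays bounded away from \(1\) (along suitable subsequences it converges to \(e^{-c/(2\sigma^2)}\) with \(c>0\)), so this case cannot be argued into the conclusion at all — it can only be avoided, e.g.\ by restricting to \(\ell_n=o(\sqrt n)\) (which is what all later uses of the lemma effectively require: when \(n\nfpi(T_n)\to0\) only boundedness of the ratio is needed, and \(n\nfpi(T_n)\to\infty\) is incompatible with \(\ell_n\gtrsim\sqrt n\)). A smaller technical point: for \(\ell_n\gg\sqrt n\) the additive \(o(n^{-1/2})\) error in the LCLT dominates the Gaussian term, so your multiplicative \((1+o(1))\) form is not justified there either, though the \(O(1)\) bound on the ratio survives. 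So the ``clean'' half of your proof matches and even sharpens the paper's argument, but the patch you propose for the borderline regime is both incorrectly argued and unpatchable as stated.
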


\begin{proof}
    \(|T_n|=o(n)\) implies that \(v_n = o(n)\) and \(\ell_n = o(n)\).
    Therefore it follows Lemma \ref{pre:lem:prob:sum:of:iid} and \ref{nonfringe:lem:expectation} that
    \[
        \frac{\E{\nftncount}}{n \nfpi(T_n)}
        =
        \frac{\p{S_{n-v_n}=n-v_n-\ell_n}}{\p{S_n = n-1}}
        \to 1.
        \qedhere
    \]
\end{proof}

\begin{lemma}
    \label{nonfringe:lem:factorial:two:asympt}
    Assume Condition \ref{cond:offspring}.
    If \(n \nfpi(T_n) \to \infty\), then \(\e (\nftncount)_2 / (n \nfpi(T_n))^2 \to 1\) as \(n \to
    \infty\).
\end{lemma}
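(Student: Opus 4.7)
The plan is to start from the exact identity of Lemma \ref{nonfringe:lem:factorial:two}, writing $\e(\nftncount)_2 = A_n + B_n$ where $A_n$ is the ``non-overlapping'' main term and $B_n := 2n \sum_{T' \in \cA_{T_n}} \nfpi(T') \p{S_{n-v(T')}=n-v(T')-\ell(T')}/\p{S_n=n-1}$ comes from self-overlapping copies of $T_n$. I will then show $A_n/(n\nfpi(T_n))^2 \to 1$ and $B_n/(n\nfpi(T_n))^2 \to 0$ separately.

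First I would use the hypothesis $n\nfpi(T_n) \to \infty$ to constrain $v_n$ and $\ell_n$. Since $\nfpi(T_n) \le \pmax^{v_n}$ with $\pmax<1$, we have $v_n = O(\log n)$. By Lemma \ref{nonfringe:lem:l:upper} (and a sharpening using that $\V \xi<\infty$ forces $p_d = o(1/d^2)$, combined with the observation that some internal node of $T_n$ has degree at least $\ell_n/v_n$), we also obtain $\ell_n = o(\sqrt{n})$. With these in hand, the main term becomes
\[
\frac{A_n}{(n\nfpi(T_n))^2} = \frac{n-2v_n+1}{n}\cdot\frac{\p{S_{n-2v_n} = (n-2v_n)-(2\ell_n-1)}}{\p{S_n=n-1}},
\]
whose first factor tends to $1$. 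For the probability ratio, I would apply the Gnedenko local central limit theorem to both numerator and denominator: writing $m = n-2v_n \sim n$ and displacement $c = 2\ell_n - 1$, the Gaussian exponent $-c^2/(2m\sigma^2)$ goes to $0$ because $\ell_n^2/n \to 0$, giving ratio $\to 1$.

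For $B_n$, the key structural observation is that every $T' \in \cA_{T_n}$ arises as $\splay(T_n, v, T_n)$ for some internal $v$ with $(T_n)_v \rootat T_n$, so $\nfpi(T') = \nfpi(T_n)^2 / \nfpi((T_n)_v)$. Since the internal nodes of $(T_n)_v$ form a subset of the internal nodes of $T_n$ with matching degrees, $\nfpi((T_n)_v) \ge \nfpi(T_n)$, hence $\nfpi(T') \le \nfpi(T_n)$. Combined with $|\cA_{T_n}| < v_n$ and the fact that each probability ratio in the $B_n$ sum is $1+o(1)$ (same LLT, since $v(T'), \ell(T')$ are of the same order as $v_n, \ell_n$), one obtains $B_n = O(n v_n \nfpi(T_n))$, whence $B_n/(n\nfpi(T_n))^2 = O(v_n/(n\nfpi(T_n))) \to 0$.

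The main obstacle is the second step: the existing Lemma \ref{nonfringe:lem:l:upper} only yields $\ell_n = o(\sqrt{n}\log n)$, which is not quite strong enough to make the Gaussian factor $\exp(-c^2/(2m\sigma^2))$ converge to $1$ in the LLT applied to $A_n$. Upgrading this to $\ell_n = o(\sqrt{n})$ under the hypothesis $n\nfpi(T_n)\to\infty$ is the delicate point; the argument sketched above (via $p_d = o(1/d^2)$ and existence of a node of degree at least $\ell_n/v_n$) needs to be executed carefully, possibly with a more refined combinatorial estimate on how $\ell_n$ leaves force the product $\prod p_{d_u}$ over internal nodes to be small. Once that bound is in place, everything else is a routine application of the local limit theorem and the trivial bound on $|\cA_{T_n}|$.
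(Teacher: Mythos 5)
Your decomposition is the same as the paper's (both start from the exact formula of Lemma \ref{nonfringe:lem:factorial:two}), but your treatment of the overlap term \(B_n\) has a genuine gap. From \(\nfpi(T')\le\nfpi(T_n)\) for \(T'\in\Toplus\) and \(|\Toplus|<v_n\) you conclude \(B_n=O(n\,v_n\,\nfpi(T_n))\), hence \(B_n/(n\nfpi(T_n))^2=O\bigl(v_n/(n\nfpi(T_n))\bigr)\), and you assert this tends to \(0\). It need not: the hypothesis gives no rate for \(n\nfpi(T_n)\to\infty\), while \(v_n\) can be of order \(\log n\). For instance, with \(p_0=p_2=1/2\) take \(T_n\) a complete binary tree whose number of internal nodes \(v_n\) is chosen so that \(n\nfpi(T_n)=n\,2^{-v_n}\sim\log\log n\); then \(v_n\sim\log_2 n\) and \(v_n/(n\nfpi(T_n))\to\infty\), so your bound proves nothing. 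Removing this factor \(v_n\) is exactly the non-routine heart of the paper's proof: it passes to a superset of \(\Toplus\) and splits it into \emph{big} trees (at least \(3v_n/2\) internal nodes), for which \(\nfpi(T')\le\nfpi(T_n)\pmax^{v_n/2}\) so that even \(v_n\) of them contribute only \(o(n\nfpi(T_n))\), and \emph{small} trees, of which there is at most one per depth \(i\) and which satisfy \(\nfpi(T_i')\le\nfpi(T_n)\pmax^{i}\), a convergent geometric series; altogether \(n\sum_{T'\in\Toplus}\nfpi(T')=O(n\nfpi(T_n))=o\bigl((n\nfpi(T_n))^2\bigr)\), with no factor \(v_n\). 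Equivalently, starting from your own identity \(\nfpi(T')=\nfpi(T_n)^2/\nfpi((T_n)_v)\) you would need the sharper bound \(\nfpi(T')\le\nfpi(T_n)\pmax^{\,v_n-v((T_n)_v)}\) together with a count of how many \(v\) realize each value of \(v_n-v((T_n)_v)\); some refinement of this kind is indispensable, and it is not a routine step.

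On the main term your insistence on \(\ell_n=o(\sqrt n)\) is well taken: the local limit theorem does require it, and the paper's appeal to Lemma \ref{pre:lem:prob:sum:of:iid} (stated only for \(O(1)\) displacements) quietly passes over this point, so you are being more careful than the source here. However, the derivation you sketch does not close: a node of degree \(d\ge\ell_n/v_n\ge\varepsilon\sqrt n/(C\log n)\) with \(p_d=o(1/d^2)\) only yields \(n\nfpi(T_n)=o((\log n)^2)\), which is perfectly compatible with \(n\nfpi(T_n)\to\infty\). A correct route is: eventually \(\nfpi(T_n)\ge 1/n\), and every internal degree \(d_u\) satisfies \(p_{d_u}\ge\nfpi(T_n)\), so any \(d_u\ge n^{1/4}\) has \(d_u^2\le n\,\sup_{d\ge n^{1/4}}d^2p_d=o(n)\); moreover each such node costs \(\log(1/p_{d_u})\ge\tfrac12\log n-O(1)\) out of the total budget \(\log(1/\nfpi(T_n))\le\log n\), so there are at most two of them, while the remaining internal nodes contribute at most \(v_n n^{1/4}=o(\sqrt n)\) to \(\sum_u d_u\ge\ell_n-1\). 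Hence \(\ell_n=o(\sqrt n)\), and your LLT argument for \(A_n\) then goes through.
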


\begin{proof}
    Let \(v=v_n\), \(\ell=\ell_n\) and \(T = T_n\).
    Let \(C_1\) and \(b_n\) be defined as in Lemma \ref{nonfringe:lem:v:upper} and
    \ref{nonfringe:lem:l:upper} respectively.
    We can assume that \(v \le C_1 \log n = o(n)\) and \(\ell \le b_n = o(n)\). Otherwise we cannot have
    \(n \nfpi(T) \to \infty\) by these two lemmas.
    If \(T' \in \Toplus\), then \(v(T') < 2 v = o(n)\) and \(\ell(T') < 2 \ell = o(n)\).
    Therefore, it follows from Lemma \ref{pre:lem:prob:sum:of:iid} and \ref{nonfringe:lem:factorial:two} that
    \begin{align*}
        \e (\nftcount)_2
        &
        =
        n(n-2v+1) \nfpi(T)^2
        \frac{
            \p{S_{n-2v}=n+1-2(v+\ell)}
        }{
            \p{S_{n}=n-1}
        }
        \\
        &
        \qquad
        +
        2 n
        \sum_{T' \in \Toplus}
        \nfpi(T')
        \frac{
            \p{S_{n-v(T')}= n- v(T')-\ell(T')}
        }{
            \p{S_{n}=n-1}
        }
        \\
        &
        =
        (1+o(1)) (n\nfpi(T))^2
        +
        O(n)
        \sum_{T' \in \Toplus}
        \nfpi(T')
        .
    \end{align*}
    Thus it suffices to show that
    \(
    n \sum_{T' \in \Toplus} \nfpi(T')
        = o(n \nfpi(T))^{2}.
    \)

    Consider the superset \(\cA\) of \(\Toplus\) that contains trees which
    can be obtained by replacing a proper non-leaf subtree of \(T\) with another copy of \(T\). (We do
    not restrict where this replacement can happen as in the definition of \(\Toplus\).)
    Note that \(|\cA| = v-1\), since \(T\) has \(v\) internal nodes and one of them is the
    root.

    If \(T' \in \cA\), then \(T'\) contains \(T\) as a fringe subtree.
    Thus \(\nfpi(T') \le \nfpi(T)\).
    In the case that \(v\) is bounded, we have
    \[
        n \sum_{T' \in \cA} \p{T' \rootat \gw}
        \le n v \nfpi(T) = O(n \nfpi(T)) = o(n \nfpi(T))^2.
    \]
    Thus we can assume that \(v \to \infty\).

    For \(T' \in \cA\), if \(T'\) has at least \(3v/2\) internal nodes, call \(T'\) \emph{big}, otherwise call it
    \emph{small}. Let \(\ToplusB\) and \(\ToplusS\) be the sets of big and small trees in
    \(\cA\) respectively.

    If \(T' \in \ToplusB\), then besides internal nodes that correspond to internal nodes of \(T\),
    \(T'\) contains at least \(v/2\) extra internal nodes. So we have
    \(
        \p{T' \rootat \gw} \le \nfpi(T) \pmax^{v/2}.
    \)
    Since \(v \to \infty\) and \(\pmax<1\), \(v \pmax^{v/2} = o(1)\).
    Using that \(|\cA| < v\), we have
    \[
        n \sum_{T' \in \ToplusB} \p{T' \rootat \gw}
        \le n v \nfpi(T) \pmax^{v/2}
        = o(n \nfpi(T)).
    \]

    Let \(T_{i,j}\) be a fringe subtree in \(T\) whose root is at depth \(i\) and is the \(j\)-th
    node of this level. If replacing \(T_{i,j}\) with a copy of \(T\) makes a new tree \(T'_{i}\)
    that has strictly less than \(3v/2\) internal nodes, then \(T_{i,j}\) must contain more than \(v/2\) internal nodes.
    Therefore, for each \(i\), there is at most one possible such \(j\).
    For an example of \(T_{i}'\), see Figure \ref{nonfringe:fig:tree:small}.

    \begin{figure}[ht!]
    \centering
        \begin{tikzpicture}
        \node[anchor=south west,inner sep=0] at (0,0) {\includegraphics{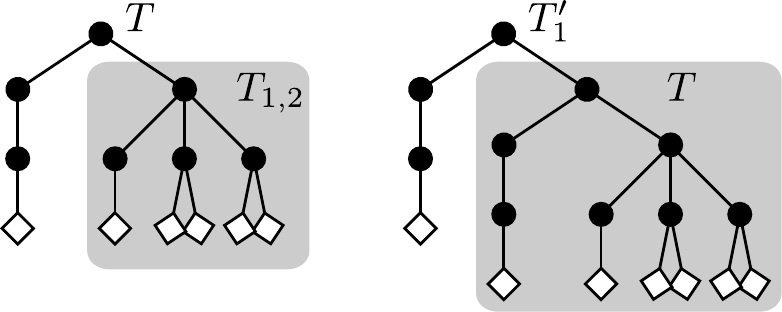}};
        \end{tikzpicture}
        \caption{An example of \(T_1'\) for \(T\) with \(7\) internal nodes.}
        \label{nonfringe:fig:tree:small}
    \end{figure}

    As \(T\) has \(v\) internal nodes, there are at most \(v-1\) possible \(i\) that can make
    \(T_{i,j}\) a proper and non-leaf subtree.
    Since \(T'_{i}\) has at least \(i\) internal nodes besides these in the copy of
    \(T\) that replaced \(T_{i,j}\),
    we have \(\nfpi(T'_{i}) \le \nfpi(T) \pmax^{i}\).
    In summary, we have
    \[
        n \sum_{T' \in \ToplusS} \nfpi(T')
        \le
        n \sum_{i=1}^{v} \nfpi(T'_{i})
        \le
        n \sum_{i=1}^{v} \nfpi(T) \pmax^{i}
        \le
        O(n \nfpi(T)).
    \]

    Therefore,
    \begin{align*}
        n \sum_{T' \in \Toplus} \p{T' \rootat \gw}
        &
        \le
        n \sum_{T' \in \ToplusB} \p{T' \rootat \gw}
        +
        n \sum_{T' \in \ToplusS} \p{T' \rootat \gw}
        \\
        &
        =
        o(n \nfpi(T))
        +
        O(n \nfpi(T))
                        = o(n \nfpi(T))^2
        .
        \qedhere
    \end{align*}
\end{proof}

The method used in proving Lemma \ref{nonfringe:lem:factorial:two} and
\ref{nonfringe:lem:factorial:two:asympt} can be adapted to show the
following lemma for higher factorial moments.
We leave the proof for interested readers.

\begin{lemma}
    \label{nonfringe:lem:factorial:high}
    Assume Condition \ref{cond:offspring}.
    If \(n \nfpi(T_n) \to \infty\), then for all fixed \(r \ge 2\),
    \[\frac {\e (\nftncount)_r} {\left(n \nfpi(T_n)\right)^{r}} \to 1,\] as \(n \to \infty\).
\end{lemma}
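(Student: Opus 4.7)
The argument will follow the template of Lemmas \ref{nonfringe:lem:factorial:two} and \ref{nonfringe:lem:factorial:two:asympt}, extended from $r=2$ to general $r \ge 2$. Write $v = v(T_n)$, $\ell = \ell(T_n)$, and let $I_i := \iverson{T_n \rootat T_i(\xitildenvec)}$ so that $\nftncount = \sum_{i=1}^n I_i$. By permutation invariance of $\xitildenvec$,
\begin{equation*}
    \E{(\nftncount)_r} = n \sum_{(i_2,\ldots,i_r)} \E{I_1 I_{i_2} \cdots I_{i_r}},
\end{equation*}
where the sum is over ordered tuples of distinct indices $i_2,\ldots,i_r \in \{2,\ldots,n\}$.

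I would partition this sum into a non-overlapping part $\cA_r'$ (the $(i_2,\ldots,i_r)$ whose internal-index sets $\idxVecdIn{1}, \idxVecdIn{i_2}, \ldots, \idxVecdIn{i_r}$ are pairwise disjoint) and an overlapping remainder $\cA_r''$. On $\cA_r'$ I would iterate the permutation-and-rotation trick from Lemma \ref{nonfringe:lem:factorial:two}: move the $r$ blocks $(\veca_1, \ldots, \veca_\ell)$ to the front, pinning positions $1, v+1, \ldots, (r-1)v+1$, with $r$ interstitial segments filling the remaining $n - rv$ positions. The number of valid placements of the remaining $r-1$ starting positions is $\binom{n-rv+r-1}{r-1}(r-1)! \sim n^{r-1}$, and Lemma \ref{pre:lem:tree:deg:seq:rotation} converts the resulting probability to an i.i.d.\ expression; this yields a main term
\begin{equation*}
    (1+o(1)) \, n^r \nfpi(T_n)^r \frac{\p{S_{n-rv} = n + 1 - r(v+\ell)}}{\p{S_n = n - 1}}.
\end{equation*}
Lemmas \ref{nonfringe:lem:v:upper} and \ref{nonfringe:lem:l:upper} let me assume $v = O(\log n)$ and $\ell = o(n^{1/2}\log n)$, so both $rv$ and $r\ell$ are $o(n)$, and Lemma \ref{pre:lem:prob:sum:of:iid} makes the probability ratio $1+o(1)$. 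Thus the $\cA_r'$ contribution is $(1+o(1))(n\nfpi(T_n))^r$.

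On $\cA_r''$ at least one pair of internal-index sets intersects, so the union of the $r$ copies of $T_n$ collapses into a merged tree $T'$ with at most $rv$ internal nodes but strictly more than $v$. Analogously to \eqref{eq:S:overlap}, the overlap contribution is bounded by $O(n^{r-1}) \sum_{T'} \nfpi(T')$, with the sum ranging over merged shapes. I would then reuse the big/small dichotomy from Lemma \ref{nonfringe:lem:factorial:two:asympt}. When $v$ stays bounded, the number of merged shapes is bounded and each satisfies $\nfpi(T') \le \nfpi(T_n)$, giving contribution $O(n^{r-1}\nfpi(T_n)) = o((n\nfpi(T_n))^r)$ since $n\nfpi(T_n) \to \infty$. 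When $v \to \infty$, the big merged trees (with at least $v/2$ excess internal nodes) contribute $O(n^{r-1}\nfpi(T_n) \cdot v^{r-1}\pmax^{v/2}) = o(n^{r-1}\nfpi(T_n))$, while the small ones admit the same geometric-sum bound $O(n^{r-1}\nfpi(T_n))$ as in Lemma \ref{nonfringe:lem:factorial:two:asympt}.

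The main obstacle is the combinatorial bookkeeping of overlap patterns in $\cA_r''$: for $r \ge 3$ one can have chains or clusters of overlaps, and a single merged shape may arise from several inequivalent tuples. The cleanest way to avoid double-counting is to proceed by induction on $r$, conditioning on the placement of the first $r-1$ non-fringe copies (bounded by the inductive hypothesis) and then separating the placement of the $r$-th copy into ``disjoint from all earlier copies'' (which supplies the extra factor $n\nfpi(T_n)(1+o(1))$ needed to close the recursion) versus ``overlapping at least one earlier copy'' (which contributes at most $(r-1)|\Toplus|$ choices of merger, each losing a factor of $\pmax^{\Omega(v)}$ or being absorbed by the divergence of $n\nfpi(T_n)$). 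The base case $r=2$ is exactly Lemma \ref{nonfringe:lem:factorial:two:asympt}.
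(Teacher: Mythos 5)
The paper itself omits the proof of this lemma, saying only that the method of Lemmas \ref{nonfringe:lem:factorial:two} and \ref{nonfringe:lem:factorial:two:asympt} can be adapted, and your plan is exactly that adaptation: the main term from pairwise-disjoint placements via the permutation-and-rotation trick together with Lemmas \ref{nonfringe:lem:v:upper}, \ref{nonfringe:lem:l:upper} and \ref{pre:lem:prob:sum:of:iid}, and the overlapping tuples controlled through merged shapes with the big/small split and the divergence of \(n\nfpi(T_n)\). The only slip is immaterial: in the main-term probability the target value should be \(n+r-1-r(v+\ell)\) rather than \(n+1-r(v+\ell)\), which does not affect the asymptotics.
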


The condition that \(n \nfpi(T_n) \to \infty\) is necessary, as shown by the following lemma.
\begin{lemma}
    \label{nonfringe:lem:line}
    Assume Condition \ref{cond:offspring}.
    Let \(L_{h(n)}\) be a chain (complete \(1\)-ary tree) of height \(h(n)\).
    Let \(X_n := \nftreecount_{L_{h(n)}}(\gwn)\).
    If \(n \p{L_{h(n)} \rootat \gw} \to \mu \in (0, \infty)\) as \(n \to \infty\), then
    \[
        \e X_n \to \mu,
        \qquad
        \V{X_n} \to
        \mu \frac{1+p_1}{1-p_1},
        \qquad
        \E{(X_n - \e X_n)^{3}}
        \to
        \mu
        \frac{3p_1^{2}+2p_1+1}{(1-p_1)^2}
        .
    \]
    As a result, \(\liminf_{n \to \infty} \distTV{X_n, \Po(\mu)} > 0\).
\end{lemma}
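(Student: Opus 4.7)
The plan is to compute the first three moments of $X_n$ and then deduce the total-variation lower bound from a mismatch between the limiting variance of $X_n$ and that of $\Po(\mu)$. The expectation asymptotic $\e X_n \to \mu$ is immediate from Lemma \ref{nonfringe:lem:expectation:asympt}, since the hypothesis $n \nfpi(L_{h(n)}) \to \mu$ is exactly what that lemma requires.

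For the variance I would apply Lemma \ref{nonfringe:lem:factorial:two} with $T = L_{h(n)}$. The key structural observation is that for a chain, splicing $T$ at an internal node of depth $d \ge 1$ with a fresh copy of $T$ produces the longer chain with $v(T)+d$ internal nodes, so $\Toplus$ consists precisely of chains with $v(T)+1, v(T)+2, \ldots, 2v(T)-1$ internal nodes. Since $\nfpi$ of each such chain is a power of $p_1$, the sum $\sum_{T' \in \Toplus} \nfpi(T')$ is a geometric series whose leading behavior gives $2n \sum_{T' \in \Toplus} \nfpi(T') \to 2\mu p_1/(1-p_1)$; combined with the main term $n(n-2v+1)\nfpi(T)^2 \cdot (1+o(1)) \to \mu^2$ obtained via Lemma \ref{pre:lem:prob:sum:of:iid}, this yields $\e(X_n)_2 \to \mu^2 + 2\mu p_1/(1-p_1)$, and hence $\V X_n = \e(X_n)_2 + \e X_n - (\e X_n)^2 \to \mu(1+p_1)/(1-p_1)$.

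For the third central moment I would derive the natural three-index analogue of Lemma \ref{nonfringe:lem:factorial:two} for $\e(X_n)_3 = \sum_{i_1,i_2,i_3 \text{ distinct}} \e I_{i_1} I_{i_2} I_{i_3}$, classifying ordered triples of starting positions in the rotated preorder degree sequence $\xitildenvec$ by overlap pattern: all three disjoint; exactly one overlapping pair; or a connected cluster in which two or three pairs overlap. For chains the cluster case is especially clean, since any connected overlap of chain non-fringe subtrees merges into a single super-chain: for sorted positions $j_1 < j_2 < j_3$ with gaps $d_{12}, d_{23} \in \{1,\ldots,v(T)-1\}$ the super-chain has $v(T)+d_{12}+d_{23}$ degree-$1$ positions required, so $\e I_{j_1} I_{j_2} I_{j_3} \approx p_1^{v(T)+d_{12}+d_{23}}$ up to the ratio $1+o(1)$ furnished by Lemma \ref{pre:lem:prob:sum:of:iid}. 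Summing the resulting two-parameter geometric series, combining with the values of $\e X_n$ and $\e(X_n)_2$ via $\e X_n^3 = \e(X_n)_3 + 3 \e(X_n)_2 + \e X_n$, and finally substituting into $\e(X_n-\e X_n)^3 = \e X_n^3 - 3(\e X_n)(\e X_n^2) + 2(\e X_n)^3$ produces the claimed limit.

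For the TV lower bound, suppose for contradiction that $\distTV{X_n, \Po(\mu)} \to 0$ along a subsequence. Then $X_n \inlaw \Po(\mu)$, and uniform integrability of $\{X_n^2\}$, which follows from the boundedness of $\e(X_n)_3$ established above, forces $\V X_n \to \V \Po(\mu) = \mu$. But $\V X_n \to \mu(1+p_1)/(1-p_1) > \mu$, because $\mu > 0$ forces $\nfpi(L_{h(n)}) = p_1^{v(L_{h(n)})} > 0$ and hence $p_1 > 0$; this contradiction yields $\liminf_n \distTV{X_n,\Po(\mu)} > 0$. The main obstacle I expect is the careful enumeration of ordered triples in the third factorial moment: separating the "exactly one overlap" sub-cases from the connected-cluster sub-cases, tracking the correct ordering multiplicity for each, and disposing of boundary and double-counting issues from the cyclic structure of $\xitildenvec$. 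These extend the case analysis already carried out for Lemma \ref{nonfringe:lem:factorial:two}, but with substantially more bookkeeping.
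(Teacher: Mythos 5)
Your moment computations follow the paper's route almost verbatim: the mean via Lemma \ref{nonfringe:lem:expectation:asympt}, and the second factorial moment via Lemma \ref{nonfringe:lem:factorial:two} together with the observation that for a chain \(\Toplus=\{L_{h+i}:1\le i\le h-1\}\), so that the overlap term is a geometric series giving \(\e (X_n)_2\to\mu^2+2\mu p_1/(1-p_1)\) and \(\V{X_n}\to\mu(1+p_1)/(1-p_1)\); like the paper, you leave the third moment as a sketched three-index extension of the same overlap analysis, which is fair since the paper does exactly that. Where you genuinely diverge is the final step. The paper is quantitative, following \citet[thm.\ 3B]{Barbour1992poisson}: it takes a maximal coupling \(Z_n\eql\Po(\e X_n)\), writes \(\V{X_n}-\e X_n=\E{[(X_n-\e X_n)^2-(Z_n-\e X_n)^2]\iverson{X_n\ne Z_n}}\), and applies H\"older to obtain the explicit bound \(\distTV{X_n,\Po(\e X_n)}\ge\bigl((\V{X_n}-\e X_n)/(\E{|X_n-\e X_n|^3})^{2/3}\bigr)^{3}\), whose liminf is positive. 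Your route is softer: TV convergence along a subsequence would give \(X_{n_k}\inlaw\Po(\mu)\), and uniform integrability of \(X_{n_k}^2\) (from \(\sup_n\e X_n^3<\infty\), which indeed follows from the bounded factorial moments via \(\e X_n^3=\e(X_n)_3+3\e(X_n)_2+\e X_n\)) forces \(\V{X_{n_k}}\to\mu\), contradicting \(\V{X_n}\to\mu(1+p_1)/(1-p_1)>\mu\) (your observation that \(\mu>0\) forces \(p_1>0\), and Condition \ref{cond:offspring} gives \(p_1<1\), closes this). Both arguments rest on the same two facts---limiting variance strictly exceeds limiting mean, plus third-moment control---but the paper's yields an explicit lower bound on the distance, while yours yields only the qualitative liminf statement at the benefit of avoiding the coupling/Stein machinery; both are valid. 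One caveat: you assert that your cluster computation ``produces the claimed limit'' without doing it; if one actually carries out the clump decomposition (overlapping chain windows merge into a super-chain with geometric excess), the third central moment comes out as \(\mu(1+4p_1+p_1^2)/(1-p_1)^2\), which does not match the constant \(\mu(1+2p_1+3p_1^2)/(1-p_1)^2\) displayed in the lemma, so that constant deserves a recheck on both sides; since only boundedness of the third moment (and the variance limit) enters the total-variation step, neither your argument nor the paper's is affected by this.
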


\begin{proof}
    Let \(h = h(n)\).  Since \(n \p{L_h \rootat \gw} = n p_1^{h} \to \mu \in (0,\infty)\),
    we have \(h = \log_{1/p_1} n + O(1)\).
    \(L_h\) has \(h\) internal nodes and one leaf.
    Thus it follows from Lemma \ref{nonfringe:lem:expectation} and \ref{pre:lem:prob:sum:of:iid}
    that
    \begin{align*}
        \e X_n
        =
        n \nfpi(L_h)
        \frac{\p{S_{n-h}=n-h-1}}{\p{S_{n} = n-1}}
        \to \mu
        .
    \end{align*}

        Since
    \(
    \{L_h \oplus L_h \}
        = \{L_{h+i}: 1 \le i \le h-1
        \}
        ,
    \)
    by Lemma \ref{pre:lem:prob:sum:of:iid},
    \begin{align*}
        \zeta_1
        &
        :=
        2n \sum_{T' \in \{L_h \oplus L_h\}}
        \nfpi(T')
        \frac{
            \p{S_{n-v(T')} = n- v(T')-\ell(T')}
        } {
            \p{S_{n}=n-1}
        }
        \\
        &
        =
        2n \sum_{i=1}^{h-1}
        \nfpi(L_{h+i})
        \frac{
            \p{S_{n-h-i} = n-h-i-1}
        } {
            \p{S_{n}=n-1}
        }
        \\
        &
        =
        (1+o(1)) 2n
        \sum_{i=1}^{h-1} p_1^{h+i}
        =
        (1+o(1)) 2n p_1^h
        \sum_{i=1}^{h-1} p_1^{i}
                        \to 2 \mu \frac{p_1}{1-p_1}
        .
    \end{align*}
    We also have by Lemma \ref{pre:lem:prob:sum:of:iid},
    \begin{align*}
        \zeta_2
        &
        :=
        n(n-2h) \nfpi(L_h)^2
        \frac{\p{S_{n-2h}=n+1-2(h+1)}}{\p{S_n=n-1}}
        \to \mu^2.
    \end{align*}
    Therefore, it follows from Lemma \ref{nonfringe:lem:factorial:two} that
    \[
        \e (X_{n})_2 = \zeta_1 + \zeta_2
        \to
        2 \mu \frac{p_1}{1-p_1} + \mu^2.
    \]
    Thus
    \[
        \V{X_n} = \E{(X_{n})_2} + \E{X_n} - \E{X_{n}}^2 \to \mu \frac{1+p_1}{1-p_1}.
    \]
    So we have
    \[
        \frac{\V{X_n}}{\E{X_n}} \to \frac{1+p_1}{1-p_1} > 1.
    \]

    With an argument similar to Lemma \ref{nonfringe:lem:factorial:two}, we can compute
    \(\E{(X_n)_3}\), which yields
    \[
        \E{(X_n - \e X_n)^{3}}
        \to
        \mu
        \frac{3p_1^{2}+2p_1+1}{(1-p_1)^2}
        .
    \]
    Since 
    \begin{align*}
        \E{|X_n - \e X_n|^3} 
        &
        = 
        2 \E{|X_n - \e X_n|^3 \times \iverson{X_n < \e X_n}}
        +
        \E{(X_n - \e X_n)^3} 
        \\
        &
        \le
        2 (\e X_n)^3
        +
        \E{(X_n - \e X_n)^3} 
        ,
    \end{align*}
    the above limit implies that
    \[
        C 
        :=
        \limsup_{n \to \infty} 
        \E{|X_n - \e X_n|^3} 
        < \infty.
    \]

    Now we can finish by following the method of \citet[thm.\ 3B]{Barbour1992poisson}.
    Let \(Z_n\eql \Po(\e X_n)\) be a coupling of \(X_n\) that minimizes \(\p{Z_n \ne X_n}\).
    Therefore we have \(\distTV{X_n, \Po(\e X_n)} = \p{Z_n \ne X_n}\).
    Thus
    \begin{align*}
    \V{X_n} - \E{X_n} 
    & = \E{(X_n-\e X_n)^2} - \E{(Z_n-\e X_n)^2} \\
    & = \E{[(X_n-\e X_n)^2-(Z_n-\e X_n)^2] \times \iverson{X_n \ne Z_n}} \\
    & \le \E{(X_n-\e X_n)^2 \times \iverson{X_n\ne Z_n}} \\
    & \le P(X_n \ne Z_n)^{1/3} (\E{|X_n-\e X_n|^3})^{2/3},
    \end{align*}
    where in the last step we use Hölder's inequality \citep[pp.\ 129]{Gut2013}.
    So
    $$
    \distTV{X_n, \Po(\e X_n)} =
    \p{X_n\ne Z_n} \ge 
    \left(
    \frac{\V{X_n}-\e X_n}{E(|X_n-\e X_n|^3))^{2/3}}
    \right)^{3}
    .
    $$
    Therefore
    \[
        \liminf_{n \to \infty} 
        \distTV{X_n, \Po(\e X_n)}
        \ge \frac{1}{C^{2}} \left( \frac{1+p_1}{1-p_1} \right)^{3} 
        > 0
        .
    \]
    Since \(\e X_n \to \mu\), we also have
    \(
        \liminf_{n \to \infty} 
        \distTV{X_n, \Po(\mu)}
        > 0
        .
    \)
\end{proof}

\begin{proof}[Proof of Theorem \ref{intro:thm:nonfringe:count}]
    (i): By Lemma \ref{nonfringe:lem:v:upper} we can assume \(v_n \le C_1 \log n\) for \(n\) large.
    Otherwise we have \(\p{\nftncount \ge 1} \le \e \nftncount \to 0\) along the subsequence that \(v_n > C_1 \log n\).
    Similarly we can assume that \(\ell_n = o(n)\) by Lemma \ref{nonfringe:lem:l:upper}.

    Then by lemma \ref{nonfringe:lem:expectation:asympt}, \(\e \nftncount \sim n \nfpi(T_n)\).
    Thus \(n \nfpi(T_n) \to 0\) implies \(\nftncount \inprob 0\).

    (ii): We can assume that \(v_n \le C_1 \log n\) and \(\ell_n = o(n)\) by Lemma
    \ref{nonfringe:lem:v:upper} and \ref{nonfringe:lem:l:upper}. Otherwise we cannot have
    \(n \nfpi(T_n) \to \infty\).
    Therefore, by Lemma \ref{nonfringe:lem:expectation:asympt}, \(\e \nftncount \sim n \nfpi(T_n) \to
    \infty\).
    By Lemma \ref{nonfringe:lem:factorial:two:asympt}, we have
    \(
        \e (\nftncount)_2 = (1+o(1)) (n\nfpi(T_n))^2.
    \)
    Therefore,
    \begin{align*}
        \V{\nftncount}
        &
        =
        \E{(\nftncount)_2}
        + \E{\nftncount}
        - (\E{\nftncount})^2
        \\
        &
        =
        (1+o(1))(n \nfpi(T_n))^2
        +
        (1+o(1))(n \nfpi(T_n))
        -
        (1+o(1))(n \nfpi(T_n))^2
        \\
        &
        =
        o(n \nfpi(T_n))^2
        .
    \end{align*}
    Thus \(\nftncount/(n \nfpi(T_n)) \inprob 1\).
\end{proof}

\subsection{Complete $r$-ary non-fringe subtrees}

\label{sec:nfringe:app}

Theorem \ref{intro:thm:nonfringe:count} allows us to find the maximal complete \(r\)-ary non-fringe subtree
in \(\gwn\).
We omit the proofs of the following results due to their similarities to Lemma \ref{family:lem:r:ary} and
\ref{family:lem:fringe:chain}:
\begin{lemma}
    \label{nonfringe:lem:r:2}
    Assume Condition \ref{cond:offspring} and that \(p_r > 0\) for some \(r \ge 2\).
    Let \(\widenonf{H}_{n,r}\) be the height of the maximal complete \(r\)-ary
    non-fringe subtree in \(\gwn\).
    Then as \(n \to \infty\),
    \[
        \frac{\widenonf{H}_{n,r}}{\log_r(\log n)} \inprob 1.
        \]
\end{lemma}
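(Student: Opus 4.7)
The plan is to apply Theorem \ref{intro:thm:nonfringe:count} to the sequence $\treerary_{h}$ of complete $r$-ary trees of height $h = h(n)$, and calibrate $h$ against $\log_r \log n$, exactly as in the proof of Lemma \ref{family:lem:r:ary}. First, I would compute $\nfpi(\treerary_{h})$. The complete $r$-ary tree of height $h$ has $v_h := (r^h - 1)/(r - 1)$ internal nodes, each of degree exactly $r$, and no constraint is placed on its leaves in the non-fringe setting; hence
\[
\nfpi(\treerary_{h}) \,=\, p_r^{v_h},
\]
which is strictly positive by the assumption $p_r > 0$. Taking logarithms yields
\[
\log\!\left(n\,\nfpi(\treerary_{h})\right) \,=\, \log n \,-\, \frac{r^h - 1}{r-1}\,\log(1/p_r).
\]

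Second, I would carry out the standard thresholding. Fix $\varepsilon \in (0,1)$. If $h \le (1-\varepsilon)\log_r \log n$, then $r^h \le (\log n)^{1-\varepsilon}$, so the display above tends to $+\infty$, i.e.\ $n\,\nfpi(\treerary_{h}) \to \infty$. If instead $h \ge (1+\varepsilon)\log_r \log n$, the same display tends to $-\infty$, i.e.\ $n\,\nfpi(\treerary_{h}) \to 0$. Note also that $|\treerary_{h}| = (r^{h+1}-1)/(r-1) = \Theta(\log n)$, so $|\treerary_{h}| \to \infty$ and $|\treerary_{h}| = o(n)$, which are the size hypotheses required by Theorem \ref{intro:thm:nonfringe:count}.

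Third, I would invoke Theorem \ref{intro:thm:nonfringe:count}(ii) in the first regime to obtain $\nftreecount_{\treerary_{h}}(\gwn)/(n\,\nfpi(\treerary_{h})) \inprob 1$, whence $\nftreecount_{\treerary_{h}}(\gwn) \ge 1$ whp and therefore $\widenonf{H}_{n,r} \ge h$ whp. Symmetrically, Theorem \ref{intro:thm:nonfringe:count}(i) applied in the second regime gives $\nftreecount_{\treerary_{h}}(\gwn) \inprob 0$, so $\widenonf{H}_{n,r} < h$ whp. Taking $h_1 = \lfloor(1-\varepsilon)\log_r \log n\rfloor$ and $h_2 = \lceil(1+\varepsilon)\log_r \log n\rceil$ and letting $\varepsilon \downarrow 0$ yields $\widenonf{H}_{n,r}/\log_r \log n \inprob 1$.

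There is essentially no hard step here: Theorem \ref{intro:thm:nonfringe:count} does all the probabilistic work, and the computation of $\nfpi(\treerary_{h})$ is immediate. The only point worth recording is the monotonicity $\treerary_{h} \rootat T \Longrightarrow \treerary_{h-1} \rootat T$, which legitimizes defining $\widenonf{H}_{n,r}$ as the maximum $h$ for which $\nftreecount_{\treerary_{h}}(\gwn) \ge 1$. Compared with the fringe case of Lemma \ref{family:lem:r:ary}, the only change is that the leaf factor $p_0^{\ell_h}$ disappears from the probability; the dominant term $v_h \log(1/p_r)$ still equals $\Theta(r^h)$, so the threshold is again at $h \sim \log_r \log n$, and the constant shift $\alpha_r$ is absorbed into the weaker conclusion of the present lemma.
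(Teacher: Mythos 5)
Your proposal is correct and is exactly the argument the paper intends: the proof is omitted there with a pointer to Lemma \ref{family:lem:r:ary}, and your computation \(\nfpi(\treerary_{h}) = p_r^{(r^h-1)/(r-1)}\) together with the \(\varepsilon\)-thresholding of \(n\,\nfpi(\treerary_h)\) and Theorem \ref{intro:thm:nonfringe:count} (plus the monotonicity remark justifying the definition of \(\widenonf{H}_{n,r}\)) is precisely that adaptation, with the leaf factor \(p_0^{\ell}\) correctly dropped in the non-fringe setting.
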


\begin{lemma}
    \label{nonfringe:lem:r:1}
    Assume Condition \ref{cond:offspring} and that \(p_1 > 0\).
    Let \(\widenonf{H}_{n,1}\) be the height of the maximal chain (complete \(1\)-ary)
    non-fringe subtree in \(\gwn\).
    Then  as \(n \to \infty\),
    \[
        \frac{\widenonf{H}_{n,1}}{\log_{1/p_1} n} \inprob 1.
    \]
\end{lemma}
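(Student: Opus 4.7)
The plan is to mirror the proof of Lemma \ref{family:lem:fringe:chain}, but replacing Theorem \ref{intro:thm:fringe:count} with Theorem \ref{intro:thm:nonfringe:count}. First I would fix the notation: let \(L_h\) denote the chain (complete \(1\)-ary tree) of height \(h\), which has \(h\) internal nodes of degree one and a single leaf at the bottom, so \(|L_h| = h+1\). A routine check using the definition of \(\rootat\) gives
\[
    \nfpi(L_h) = \p{L_h \rootat \gw} = p_1^{h},
\]
since \(L_h \rootat \gw\) requires the root of \(\gw\) and its first \(h-1\) leftmost descendants along a chain to all have degree exactly one, while the node at depth \(h\) is free (it corresponds to the leaf of \(L_h\), which may be replaced by any subtree). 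Note also the monotonicity \(\widenonf{H}_{n,1} \ge h\) if and only if \(\nftreecount_{L_h}(\gwn) \ge 1\).

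Next I would translate Theorem \ref{intro:thm:nonfringe:count} into a two-sided bound for \(\widenonf{H}_{n,1}\). Let \(\omega_n \to \infty\) be an arbitrary sequence. If \(h_n \le (\log n - \omega_n)/\log(1/p_1)\), then
\[
    n \nfpi(L_{h_n}) = n p_1^{h_n} \ge e^{\omega_n} \to \infty,
\]
and since \(|L_{h_n}| = h_n + 1 = O(\log n) = o(n)\), Theorem \ref{intro:thm:nonfringe:count}(ii) applies and gives \(\nftreecount_{L_{h_n}}(\gwn)/(n\nfpi(L_{h_n})) \inprob 1\). In particular \(\nftreecount_{L_{h_n}}(\gwn) \inprob \infty\), which implies \(\widenonf{H}_{n,1} \ge h_n\) whp. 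Symmetrically, if \(h_n \ge (\log n + \omega_n)/\log(1/p_1)\), then \(n \nfpi(L_{h_n}) \le e^{-\omega_n} \to 0\), and Theorem \ref{intro:thm:nonfringe:count}(i) yields \(\nftreecount_{L_{h_n}}(\gwn) \inprob 0\), so \(\widenonf{H}_{n,1} < h_n\) whp.

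Finally I would combine the two bounds. For any \(\varepsilon > 0\), choose \(h_n^{-} = \lfloor (1-\varepsilon)\log_{1/p_1}(n)\rfloor\) and \(h_n^{+} = \lceil (1+\varepsilon)\log_{1/p_1}(n)\rceil\); both satisfy the hypotheses above with a suitable \(\omega_n \to \infty\). Thus \(h_n^{-} \le \widenonf{H}_{n,1} < h_n^{+}\) whp, which gives \(\widenonf{H}_{n,1}/\log_{1/p_1}(n) \inprob 1\) as claimed. There is no serious obstacle here: the verification \(\nfpi(L_h) = p_1^h\) and the fact that the size constraint \(|T_n| = o(n)\) required by Theorem \ref{intro:thm:nonfringe:count} is automatic in the logarithmic regime are both immediate; the rest is the same threshold bookkeeping used in Lemma \ref{family:lem:fringe:chain}.
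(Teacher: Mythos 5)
Your proof is correct and is exactly the argument the paper intends: the paper omits the proof of Lemma \ref{nonfringe:lem:r:1}, citing its similarity to Lemma \ref{family:lem:fringe:chain}, and your substitution of Theorem \ref{intro:thm:nonfringe:count} for Theorem \ref{intro:thm:fringe:count}, together with \(\nfpi(L_h)=p_1^h\) and the monotonicity of chain containment, is precisely that threshold argument. No gaps.
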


\begin{example*}[The binary tree]
    Recall that when \(p_0=p_2=1/4\) and \(p_1 = 1/2\), \(\gwn\) is equivalent to a uniform random
    binary tree of size \(n\). It follows from Lemma \ref{nonfringe:lem:r:1} that
    \(\widenonf{H}_{n,1}/\log_2 n \inprob
    1\). This result was previously proved by \citet{Devroye1999properties}.
\end{example*}

\section{Open questions}

\label{sec:open}

Theorem \ref{intro:thm:nonfringe:count} shows that if \(n \nfpi(T_n):=n \p{T_n \rootat \gw} \to \infty\),
then \(\nftncount/n \nfpi(T_n) \inprob 1\).
We believe that with an extra assumption that \(v(T_n) \to \infty\),
it is also true that \( (\nftncount-n \nfpi(T_n))/\sqrt{n \nfpi(T_n)}\) converges in
distribution to a normal distribution.
(\citet[thm.\ 1.9]{Janson2014Asym} has shown that this is indeed the case when \(v(n)\) is bounded.)

Theorem \ref{intro:thm:fringe:set:count} generalizes Theorem \ref{intro:thm:fringe:count} by considering
the number of fringe subtrees whose shapes belong to a set of trees \(\tsetkn\) instead of being a single tree
\(T_n\). It may be possible to generalize Theorem \ref{intro:thm:nonfringe:count} in similar way, i.e.,
we consider the non-fringe subtrees whose shapes belong to a set of trees \(\tsetkn\) instead of being a single tree
\(T_n\).

Another problem may be of interest is to get a non-fringe version of Theorem
\ref{family:thm:fringe:all:appear}, i.e., what are the sufficient conditions for all (or not all) trees of
size at most \(k\) to appear in \(\gwn\) as non-fringe subtrees.

Let \(T\) be a tree and \(v\) be a node of \(T\).  Recall that \(T_v\) denotes the fringe subtree rooted at
\(v\).  If by removing some or none the subtrees of \(T_{v}\), we can make it isomorphic to another
tree \(T'\), then we say that \(T\) contains an \emph{embedded subtree} of the shape \(T'\) at
\(v\). A more challenging open question is to determine the size of the maximum complete \(r\)-ary
embedded subtree in large conditional Galton-Watson trees.

\bibliographystyle{abbrplainnat}
\bibliography{citation}

\end{document}